\newcommand{\al}{\alpha}
\newcommand{\lm}{\lambda}
\newcommand{\vph}{\varphi}
\newcommand{\T}{\operatorname{T\,}}
\newcommand{\vrai}{\operatorname{ess\,}}
\newcommand{\wt}{\widetilde}
\newtheorem{theorem}{Theorem}[section]
\newtheorem{definition}[theorem]{Definition}
\newtheorem{remark}[theorem]{Remark}
\newtheorem{lemma}[theorem]{Lemma}
\newtheorem{corollary}[theorem]{Corollary}
\begin{document}
	\title{Well-posedness and robust stability of a nonlinear ODE-PDE system \thanks{Tis work was supported by the German Research
			Foundation (DFG) (Grant no. DA 767/12-1) and the
			National Research Foundation of Ukraine (NRFU) (Grant
			no. NRFU F81/41743) through the joint German-Ukrai-
			nian project ''Stability and robustness of attractors of
			nonlinear infinite-dimensional systems with respect to
			disturbances.''}}
	\author{S. Dashkovskiy \\
		University of W{\"u}rzburg \\
		\and 
		O. Kapustyan \\
		T. Shevchenko University of Kyiv \\
		\and V. Slynko\\
		University of W{\"u}rzburg
	}
\maketitle

\begin{abstract}
This work studies stability and robustness of a nonlinear system given as an interconnection of an ODE and a parabolic PDE subjected to external disturbances entering through the boundary conditions of the parabolic equation. To this end we develop an approach for a construction of a suitable coercive Lyapunov function as one of the main results. Based on this Lyapunov function we establish the well-posedness of the considered system and establish conditions that guarantee the ISS property.
ISS estimates are derived explicitly for the particular case of globally Lipschitz nonlinearities.
\end{abstract}


{\bf Keywords:}
	Infinite-dimensional systems, coupled ODE-PDE equations, stability and robustness
	
	

\section{Introduction}

Studying stability of infinite dimensional systems has a long history of several decades \cite{Dat72},\cite{KrD74},\cite{Hen81}. During the last decade a lot of attention was devoted to the investigation of robust stability of such systems, especially in the input-to-state stability (ISS) framework
\cite{KaK19},\cite{MiW19},\cite{MiP20},\cite{JSZ19}. This framework is known to be suitable in studying interconnected systems. Coupled systems appear in many modern practical problems. A special class of interconnections are couplings of a PDE and ODE systems motivated by different applications from mechanics \cite{DaN07}, control \cite{GGT20},\cite{laurent},\cite{salvidar}, biology \cite{efendiev}, etc.
Since we are interested in stability properties, we recall several recent works where this property was studied for the case of ODE-PDE couplings.

The work \cite{salvidar} deals with the problem of axial and rotational vibrations in the drilling process, where the rock-bit interaction leads to a nonlinear coupling function between an ODE and the wave equation. A feedback stabilization controller was developed there to guarantee the ultimate boundedness of solutions so that the undesired vibrations are suppressed successfully. To this end the authors proposed a Lyapunov-Krasovskii functional and developed stability conditions in form of linear and bilinear matrix inequalities.

Theorems of the small-gain type were used in \cite{ahmed-ali} to establish exponential stability of a hybrid system given as an interconnection of an autonomous ODE and a parabolic PDE. Based on the ISS-type estimates the authors establish exponential stability for the case when a discrete time controller is applied. Their stability conditions restrict the maximal discretization of the time interval. By means of a coordinate transformations from \cite{krst} it was demonstrated that these conditions can be used for the design of suitable observers with discrete time output, for linear ODE-PDE cascades.

A feedback controller for local stabilization of a reaction-diffusion system was proposed in \cite{miron}. The Lyapunov stability of the uncontrolled system was not assumed there. By means of the direct Lyapunov method the authors derive estimates for the domain of attraction for the closed loop system, in the form of linear and bilinear inequalities. These results can be applied to the cases of distributed as well as scalar boundary controllers.

Boundary stabilization of a cascade of an ODE with a heat equation was considered in \cite{kang}. By means of the backstepping techniques the authors have designed a suitable stabilizing controller and derived estimates for the domain of attraction. These estimates were obtained by the direct Lyapunov method and with help of the Halanay inequality. These results were then extended in \cite{kang2} to the case of coupled linear ODE-PDE system with time varying delay.

Stabilization of a linear ODE-PDE system with boundary control was considered in \cite{yuan}. Using the backstepping approach the system was transformed into a cascade form for which a stabilizing controller was designed. By the direct  Lyapunov method exponential estimates for the norm of solutions of the original system were derived.

The authors of  \cite{tanwani} consider a system of linear hyperbolic equations, where the state of one boundary point is controlled by the state measured at another boundary point. Since the measurements are assumed to be perturbed the problem is to design controllers robust with respect to the measurement disturbances. For locally essentially bounded disturbances the conditions to guarantee the ISS or ISpS property are derived. The well-posedness and stability analysis are based on the $C_0$-semigroup theory and Lyapunov methods.

Adaptive stabilization problem for a cascade of an ODE with a hyperbolic PDE subjected to unknown harmonic disturbances was considered in \cite{Xu}. Stabilizing controllers were developed there. By means of the linear semigroup theory and La Salle invariance principle conditions of well-posedness and for the asymptotic stability for the closed loop system were established.

Some older related works are \cite{bartyshev, kedyk,kedyk-ob}, where stability of coupled parabolic PDE with an ODE was considered. These works use vector Lyapunov functions combined with theory of monotone dynamical systems in Banach spaces \cite{hirsh}.
Matrix valued Lyapunov functions were used in \cite{mart-sl} for stability investigation of ODE-PDE systems. The ISS framework for such and other infinite dimensional systems was used in \cite{KaK19}. 

Let us note that in the most of the above literature it is required that decoupled systems are asymptotically stable. This excludes a class of interconnections where one of the subsystems can be unstable, but the overall system is stabilized by the other one. Our result aims to fill this gap.

In this work we consider a coupled ODE with a parabolic PDE so that the decoupled ODE subsystem is not necessarily ISS. The disturbances enter to the system at the boundary, which is more difficult to handle than the distributed ones. We will derive conditions guaranteeing the ISS property for the coupled system under the assumption  that the linearized PDE is globally asymptotically stable. 

In contrary to well-known approaches such as vector Lyapunov functions or small-gain theory a construction of a Lyapunov function for the whole system in our case cannot be derived from the Lyapunov functions of subsystems, due to the presence of an unstable subsystem. Also the admissibility approach as used in \cite{JSZ19}, cannot be applied directly to our case because of the presence of essentially nonlinear functions in the subsystems. Even if we exclude such nonlinearities in our system this approach is hardly possible to extend as even in this case we will have difficulties to derive an explicit expression for the linear semigroup generated by the linear part of the system. 
Expansion of solution in series of eigenfunctions (of the linear part) as used for example in \cite{KaK19} cannot be applied in our case due to the essential nonlinearities. 

A new method is needed that takes nonlinearities of the subsystems, possible instability of one of them and the presence of disturbances at the boundary of the parabolic PDE into account.

To solve this problem we provide an approach to construct a Lyapunov function, which leads to the resolving of a boundary value problem for a second order PDE. The Green function will be derived for the latter problem explicitly, so that the needed Lyapunov function is obtained also explicitly. Based on this Lyapunov function we derive conditions for the ISS property of the coupled system. For the case of globally Lipschitz nonlinearities we provide an ISS-type estimation of solutions.

This paper is organized as follows. The next section introduces notation and several known basic facts that will be used in the main part of the paper. The problem statement is given in Section 3 together with related definitions. Several steps needed for the construction of a suitable Lyapunov function are explained in Section 4. The well-posedness and the main result establishing the ISS property is given in Section 5.  Concluding remarks are collected in Section 6. Technical proofs are provided in the Appendix.

\section{Notation and known facts}
By $C[0,l]$ we denote the space of continuous functions defined on $[0,l]$ with values in $\mathbb{R}$ normed by $\|f\|_{C[0,l]}=\max\limits_{x\in [0,l]}|f(x)|$, and  $C^k[0,l]$ for $k\in\mathbb{N}$ denotes the space of continuously differentiable up to the order $k$ functions with the norm
$\|f\|_{C^k[0,l]}=\max\limits_{p=0,\dots,k}\max\limits_{x\in [0,l]}|f^{(p)}(x)|$. Let
$L^p[0,l]$, $1\le p<\infty$ be the space of Lebesgue measurable functions with finite norm given by  $\|f\|_{L^p[0,l]}=(\int\limits_0^l|f(z)|^p\,dz)^{1/p}$. The Hilbert space $H^k[0,l]\subset L^2[0,l]$ is a subset of $L^2[0,l]$ of functions $f$ with (generalized) derivatives $f^{(p)}\in L^2[0,l]$ and scalar product is defined by
\begin{equation}\label{0.1}
(f,g)_{H^k[0,l]}=\sum\limits_{p=0}^k\int\limits_0^lf^{(p)}(z)g^{(p)}(z)\,dz.
\end{equation}
Recall that $H^k[0,l]$ is a completion of $C^k[0,l]$ with respect to the norm $\|f\|_{H^k[0,l]}=\sqrt{(f,f)_{H^k[0,l]}}$.
$C_0^{\infty}[0,l]$ denotes the set of infinitely differentiable functions  $[0,l]$ vanishing in a vicinity of the points $x=0$ and $x=l$. $C_0^{\infty}([0,T],C_0^{\infty}[0,l])$ denotes the set infinitely smooth mappings  $f\,:\,[0,T]\to C_0^{\infty}[0,l]$ with zero value in a vicinity of the points $t=0$ and $t=T$. The completion of  $C_0^{\infty}[0,l]$ with respect to the norm $H^k[0,l]$  is denoted by $H_0^k[0,l]$.
$H^{-k}[0,l]$ denotes the dual space to $H^k_0[0,l]$ with the standard norm. For $\alpha,\beta\in\Bbb Z_+$ by $H^{\alpha,\beta}([0,l]\times[0,T])$ we denote the space of such functions $f:[0,l]\times[0,T]\to\Bbb R$ such that $\partial^k_z f\in L^2([0,l]\times[0,T])$ for $k=0,1,\dots,\alpha$ and $\partial^k_t f\in L^2([0,l]\times[0,T])$ for $k=0,1,\dots,\beta$, see, e.g., \cite{Mik78}.

For a Banach space $\mathcal X$ its dual space is denoted by $\mathcal X^*$ and $\langle\cdot,\cdot\rangle:\mathcal X^*\times\mathcal X\to \mathbb R$ is the usual duality pairing.

Let $\mathcal X$ be a Banach space, then $L^p([0,T],\mathcal X)$, $1\le p<\infty$ denotes the space of strongly measurable mappings $[0,T]\to\mathcal X$ such that the norm
 $\|f\|_{L^p([0,T],\mathcal X)}=(\int\limits_0^T\|f(s)\|_{\mathcal X}^p\,ds)^{1/p}$ is finite. For $p=\infty$ this norm is defined by
$\|f\|_{L^{\infty}([0,T],\mathcal X)}=\vrai\sup_{t\in[0,T]}\|f(t)\|_{\mathcal X}$.

$L^{\infty}(\mathbb{R}_+)$ is the Banach space of measurable functions essentially bounded on $\mathbb{R}_+:=[0,\infty)$.
For $x\in\mathbb{R}^n$ we use the norm $\|x\|=(\sum_{i=1}^nx_i^2)^\frac12$.
$C^1(U,\mathbb{R}^n)$, $U\subset\mathbb{R}$ is the set of continuously differential mappings  $U\to\mathbb{R}^n$.

For $f\,:\,[0,l]\to (L^p[0,l])^n$, $1\le p<\infty$ the norm is defined by  $\|f\|_{L^p[0,l]}=(\sum\limits_{k=1}^n\|f_k\|_{L^p[0,l]}^p)^{1/p}$
and for $p=\infty$ it is  $\|f\|_{L^{\infty}[0,l]}=\max_{k=1,\dots,n}\|f_k\|_{L^{\infty}[0,l]}$.
We will use the following sets of comparison functions:
\begin{equation*}
	\mathcal K=\{\gamma\,:\,\mathbb{R}_+\to\mathbb{R}_+\,:\,\text{continuous, strictly increasing and } \,\,\gamma(0)=0\},
\end{equation*}	
\begin{equation*}
	\mathcal L=\{\gamma\,:\,\mathbb{R}_+\to\mathbb{R}_+\,:\,\text{continuous, strictly decreasing and}\,\, \lim\limits_{t\to\infty}\gamma(t)=0\},
\end{equation*}	
\begin{equation*}	
		\mathcal K\mathcal K\mathcal L=\{\beta\,:\,\mathbb{R}_+^2\times\mathbb{R}_+\to\mathbb{R}_+\,:\,\text{cont.,}\,\, \beta(\cdot,r,t),\beta(r,\cdot,t)\in\mathcal K,\,\,\beta(r,s,\cdot)\in\mathcal L\}.
\end{equation*}
$\mathbb{R}^{n\times m}$ is the linear space of
$n\times m$-matrices, and if $m=n$, then $\mathbb{R}^{n\times n}$ is a Banach algebra. 
By $\mathbb S^n$ we denote the set of the symmetric matrices of the size $n$. For $P,Q\in\mathbb S^n$, we write $P\succ Q$ if and only if $P-Q$ is positive definite.
For $A\in\mathbb S^{n}$ its minimal and maximal eigenvalues are denoted by $\lambda_{\min}(A)$ and  $\lambda_{\max}(A)$ respectively. The norm on $\mathbb{R}^{n\times n}$ is induced by the euclidean norm in
$\mathbb{R}^n$: $\|A\|=\sup_{\|x\|=1}\|Ax\|=\lambda_{\max}^{1/2}(A^{\T}A)$.

We will use the following well-known inequalities:
Young's inequality
\begin{equation}\label{Yang}
\gathered
xy\le \frac{x^{p_1}}{p_1}+\frac{y^{p_2}}{p_2},\quad x\ge 0,\quad y\ge 0,\quad p_1\in(1,\infty),\quad \frac{1}{p_1}+\frac{1}{p_2}=1,
\endgathered
\end{equation}
H\"older's integral inequality for functions $f\in L^{p_1}[0,l]$ and $g\in L^{p_2}[0,l]$
\begin{equation}\label{Holder}
\gathered
\Big|\int\limits_{0}^lf(t)g(t)\,dt\Big|\le\Big(\int\limits_0^l|f(t)|^{p_1}\,dt\Big)^{1/p_1}\Big(\int\limits_0^l|g(t)|^{p_2}\,dt\Big)^{1/p_2},\\
\quad p_1\in(1,\infty),\quad \frac{1}{p_1}+\frac{1}{p_2}=1,
\endgathered
\end{equation}
in the particular case $p_1=p_2=1/2$ it is also called the Cauchy-Bunyakovsky inequality.
We will also use the following elementary inequality
\begin{equation}\label{ElN}
\gathered
-az^2+bz\le -\frac{a}{2}z^2+\frac{b^2}{2a},\quad a>0.
\endgathered
\end{equation}

For the poof of the existence of solutions to our problem we will use the following
\begin{theorem}[Theorem 1.4 in \cite{babin-v}]\label{teorema-o-kompakt}
Let $\mathcal X_0$, $\mathcal X$, $\mathcal X_1$ be Banach spaces with
$\mathcal X_1\subset \mathcal X\subset \mathcal X_0$, where $\mathcal X_0$ and $\mathcal X_1$ are reflexive and the embedding $\mathcal X_1\subset \mathcal X$ is compact. Let $\{v_n\}_{n=1}^{\infty}\subset L^{p_0}([0,T],\mathcal X_1)$, $1\le p_0<\infty$ be a bounded sequence in $ L^{p_0}([0,T],\mathcal X_1)$. Let the sequence of the generalized derivatives be such that
$\partial_t v_n\in L^{p_1}([0,T],\mathcal X_0)$, $p_1>1$ and bounded in $L^{p_1}([0,T],\mathcal X_0)$.
Then there is a subsequence of  $\{v_n\}_{n=1}^{\infty}$ which converges in $L^{p_0}([0,T],\mathcal X)$.
\end{theorem}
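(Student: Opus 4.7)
My plan is to establish the statement as the classical Aubin-Lions compactness lemma, organized around two ingredients: an Ehrling-type interpolation inequality that exploits the compact embedding $\mathcal X_1 \subset \mathcal X$, and an equicontinuity estimate for time translates of $v_n$ in $\mathcal X_0$ coming from the uniform bound on $\partial_t v_n$. Combining them with a Kolmogorov-Riesz argument in the Bochner space $L^{p_0}([0,T], \mathcal X_0)$ will yield strong convergence of a subsequence in $L^{p_0}([0,T], \mathcal X)$.

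First I would prove Ehrling's inequality: for every $\varepsilon > 0$ there exists $C_\varepsilon > 0$ such that
\begin{equation*}
\|w\|_{\mathcal X} \le \varepsilon \|w\|_{\mathcal X_1} + C_\varepsilon \|w\|_{\mathcal X_0} \quad \text{for all } w \in \mathcal X_1.
\end{equation*}
The argument is by contradiction: otherwise one obtains a sequence $\{w_n\} \subset \mathcal X_1$ with $\|w_n\|_{\mathcal X} = 1$ and $\|w_n\|_{\mathcal X_1}$ uniformly bounded, while $\|w_n\|_{\mathcal X_0} \to 0$. Compactness of $\mathcal X_1 \subset \mathcal X$ then yields a subsequence convergent in $\mathcal X$ to some $w$ with $\|w\|_{\mathcal X} = 1$, while continuity of $\mathcal X \hookrightarrow \mathcal X_0$ forces $w = 0$, a contradiction.

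Next, by reflexivity of $\mathcal X_1$ and $\mathcal X_0$ I extract via Banach-Alaoglu a subsequence along which $v_n \rightharpoonup v$ in $L^{p_0}([0,T], \mathcal X_1)$ and $\partial_t v_n \rightharpoonup \partial_t v$ in $L^{p_1}([0,T], \mathcal X_0)$. The heart of the proof is then to upgrade this to strong convergence $v_n \to v$ in $L^{p_0}([0,T], \mathcal X_0)$ by the Kolmogorov-Riesz-Fr\'echet criterion in Bochner spaces. Equicontinuity of $h$-translations in $\mathcal X_0$ follows from
\begin{equation*}
\|v_n(t+h) - v_n(t)\|_{\mathcal X_0} \le \int_t^{t+h} \|\partial_t v_n(s)\|_{\mathcal X_0}\, ds \le h^{1 - 1/p_1} \|\partial_t v_n\|_{L^{p_1}([0,T], \mathcal X_0)},
\end{equation*}
uniformly in $n$, while pointwise precompactness of the values in $\mathcal X_0$ comes from the $\mathcal X_1$-bound together with the compact embedding $\mathcal X_1 \subset \mathcal X$ composed with the continuous embedding $\mathcal X \hookrightarrow \mathcal X_0$.

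Finally I combine the two ingredients: applying Ehrling's inequality pointwise in $t$ and integrating yields
\begin{equation*}
\|v_n - v\|_{L^{p_0}([0,T], \mathcal X)} \le \varepsilon \bigl(\|v_n\|_{L^{p_0}([0,T], \mathcal X_1)} + \|v\|_{L^{p_0}([0,T], \mathcal X_1)}\bigr) + C_\varepsilon \|v_n - v\|_{L^{p_0}([0,T], \mathcal X_0)},
\end{equation*}
so sending $n \to \infty$ and then $\varepsilon \to 0$ gives strong convergence of the subsequence in $L^{p_0}([0,T], \mathcal X)$. The main obstacle I anticipate is in the Kolmogorov-Riesz step: the $L^{p_0}$-bound on $v_n$ in $\mathcal X_1$ does not a priori supply pointwise $\mathcal X_1$-compactness of the values $v_n(t)$, so I would regularize in time, either by mollifying the $v_n$ or by replacing them by the averages $\frac{1}{h}\int_t^{t+h} v_n(s)\,ds$, which inherit bounds from both the sequence and its derivatives and allow the compactness argument in $\mathcal X_0$ to go through cleanly.
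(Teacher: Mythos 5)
The paper does not prove this statement: it is imported verbatim as Theorem~1.4 of the cited monograph of Babin and Vishik and used as a black box, so there is no internal proof to compare against. Your sketch is the standard Aubin--Lions--Simon argument (Ehrling interpolation plus Kolmogorov--Riesz compactness of time translates in $L^{p_0}([0,T],\mathcal X_0)$), and it is essentially correct: the contradiction proof of the Ehrling inequality is right, the translate estimate $\|v_n(\cdot+h)-v_n\|_{\mathcal X_0}\le h^{1-1/p_1}\|\partial_t v_n\|_{L^{p_1}([0,T],\mathcal X_0)}$ is exactly where the hypothesis $p_1>1$ enters, and you correctly identify that an $L^{p_0}$ bound gives no pointwise-in-$t$ precompactness, so that one must pass to Steklov averages $\frac1h\int_t^{t+h}v_n(s)\,ds$, which are bounded in $\mathcal X_1$ for fixed $h$ and hence precompact in $\mathcal X$. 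Two small points to tighten: for $p_0=1$ the space $L^{1}([0,T],\mathcal X_1)$ is not reflexive even when $\mathcal X_1$ is, so you cannot extract the weak limit $v$ there at the outset; instead produce $v$ as the strong $L^{p_0}(\mathcal X_0)$ limit furnished by the compactness step and recover $v\in L^{p_0}([0,T],\mathcal X_1)$ a posteriori by lower semicontinuity of the norm. Also, the final integration of the Ehrling inequality should be done on the $p_0$-th powers (or via Minkowski) to get the displayed $L^{p_0}$ bound, which is routine. With these adjustments the argument is complete and is the proof one would find in the cited source or in Simon's paper on compact sets in $L^p(0,T;B)$.
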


We will also use the integration by parts formula, see (33) on page 43 in \cite{babin-v}, as follows. Let $u,v\in L^p([0,T],L^p[0,l])$, $p\ge 2$, $\partial_t u,\partial_t v\in L^{p^{\prime}}([0,T],L^{p^{\prime}}[0,l])$
$1/p+1/p^{\prime}=1$, $L^{p}[0,l]\subset L^2[0,l]\subset L^{p^{\prime}}[0,l]$,  $u,v\in L^{\infty}([0,T],L^2[0,l])$, $0\le\tau\le T$, then
\begin{equation}\label{0.2}
\gathered
\int\limits_0^{\tau}\langle\partial_t u,v\rangle\,dt+\int\limits_0^{\tau}\langle u,\partial_tv\rangle
=\langle u(\tau),v(\tau)\rangle-\langle u(0),v(0)\rangle.
\endgathered
\end{equation}
\section{Problem statement}\label{problem}
Consider a coupled system of the following differential equations
\begin{equation}\label{1}
\gathered
u_t(z,t)=a^2u_{zz}(z,t)+f(u(z,t))+B^{\T}(z)x(t),\\
\dot x(t)=Cx(t)+X(x(t))+\int\limits_0^lD(z)u(z,t)\,dz,
\endgathered
\end{equation}
with initial conditions
\begin{equation}\label{2}
x(0)=x_0\in\mathbb{R}^n,\quad u(z,0)=\vph(z),\quad \vph\in L^2[0,l],\quad z\in(0,l),\quad t\in(0,+\infty)
\end{equation}
and boundary conditions
\begin{equation}\label{3}
u(0,t)=d_1(t),\quad u(l,t)=d_2(t),
\end{equation}
where $a>0$, $C\in\mathbb{R}^{n\times n}$,
$B,D\in L^2([0,l],\mathbb{R}^{n})$. For the nonlinear functions $f\,\,:\,\,\mathbb{R}\to\mathbb{R}$ and $X\,\,:\,\,\mathbb{R}^n\to\mathbb{R}^n$ we assume:

1) $f(s)$ can be written as $f(s)=f_0(s)+f_1(s)$, $f_0(0)=f_1(0)=0$, where $f_0(\cdot)$
is globally Lipschitz with Lipschitz constant $L>0$, that is
\begin{equation}\label{3*}
|f_0(s_2)-f_0(s_1)|\le L|s_2-s_1|,\quad s_1, s_2\in\mathbb{R}
\end{equation}
and $f_1\in C^{1}(\mathbb{R},\mathbb{R})$.

2)  $\exists\,\sigma\in\mathbb{R}$, $\alpha>0$ and $q\ge 3/2$ such that
\begin{equation}\label{3**}
sf(s)\le\sigma s^2-\alpha |s|^{2q}\quad\text{for all}\quad s\ne 0;
\end{equation}

3) $\exists\,\zeta>0$, $c_0>0$ such that for all  $s\in\mathbb{R}$ it holds that:
\begin{equation}\label{3***}
|f_1^{\prime}(s)|\le c_0(1+|s|^{2q-2}),\quad |f_1(s)|\le\zeta|s|^{2q-1}
\end{equation}

4) $X\in C^1(\mathbb{R}^n,\mathbb{R}^n)$ and there exists $\delta_2>0$ such that 
\begin{equation}\label{3****}
\|X(x)\|\le\delta_2\|x\|^{2q-1},\quad x\in\mathbb{R}^n.
\end{equation}

The disturbances $d_i(t)$, $i=1,2$ are assumed to be of class $L^{\infty}(\mathbb{R}_+)$ with  $d_i(0)=0$ and such that
$\frac{d}{dt}d_i\in L^{\infty}(\mathbb{R}_+)$.
\begin{remark}
A very special case of the problem \eqref{1}-\eqref{3} was considered in \cite{mart-sl}, namely with $f=0$, $X=0$ and $d_1=d_2=0$, which leads to a linear system without disturbances. The construction of a Lyapunov function from \cite{mart-sl} is not suitable in the case of \eqref{1}-\eqref{3}.
\end{remark}
For the problem \eqref{1}--\eqref{3} we are interested in weak solutions defined as follows
\begin{definition}\label{definition1}
A pair of functions $u\in L^2([0,T],H^{1}[0,l])\cap L^{2q}([0,T],L^{2q}[0,l])\cap C([0,T],L^{2}[0,l])$ and $x\in C([0,T],\mathbb{R}^n)$ satisfying
\begin{equation}\label{1bis}
\gathered
-\int\limits_{0}^T\langle u(\cdot,t),\phi_t(\cdot,t)\rangle\,dt=\int\limits_0^T(-a^2\langle u_{z}(\cdot,t),\phi_z(\cdot,t)\rangle\\
+\langle f(u(\cdot,t))+B^{\T}(\cdot)x(t),\phi(\cdot,t)\rangle)\,dt\\
\dot x(t)=Cx(t)+X(x)+\int\limits_0^lD(z)u(z,t)\,dz,
\endgathered
\end{equation}
for any $\phi\in C_0^{\infty}([0,T],C_0^{\infty}[0,l])$ and conditions \eqref{2}--\eqref{3}   is called weak solution of \eqref{1}--\eqref{3}.
\end{definition}
\begin{remark}
In this definition we mean that $u$ satisfies \eqref{2} in the sense that $\lim_{t\to 0+}\|u(z,t)-\varphi(z)\|_{L^2[0,l]}=0$. As well we mean that \eqref{3} holds for almost all $t\in(0,\infty)$, where the values $u(0,t)$ and $u(l,t)$ are well defined due to the compact embedding $H^1(0,l)\subset C[0,l]$.
\end{remark}
\begin{definition}\label{definition2}
	 We say that system \eqref{1}--\eqref{3} is input-to-state stable (ISS) from $(d_1,d_2)$ to $(u,x)$, if there exist $\beta_i\in \mathcal K\mathcal K\mathcal L$, $\gamma_i\in\mathcal K$, $i=1,2$ such that for any initial state $(\varphi,x_0)$ and any disturbance $(d_1,d_2)$ the solution satisfies
\begin{equation}\label{ISS}
\gathered
\|u(\cdot,t)\|_{L^2[0,l]}\le\beta_1(\|\vph\|_{L^2[0,l]},\|x_0\|,t)+\gamma_1(d_{\infty}),\quad t\ge0\\
\|x(t)\|\le\beta_2(\|\vph\|_{L^2[0,l]},\|x_0\|,t)+\gamma_2(d_{\infty}),\quad t\ge0
\endgathered
\end{equation}
where $d_{\infty}:=\max(\|d_1\|_{L^{\infty}(\Bbb R_+)},\|d_2\|_{L^{\infty}(\Bbb R_+)})$.
\end{definition}
The aim of this paper is to establish the ISS property of the interconnected system \eqref{1}--\eqref{3} in the sense of this definition.
\section{Preliminary results}
To construct a suitable Lyapunov function we first prove the following:
\begin{lemma}
Let $(u,x)\in (L^{2}([0,T],H^1[0,l])\cap L^{2q}([0,T],L^{2q}[0,l]))\cap C([0,T],L^2[0,l])\times C([0,T],\mathbb{R}^n)$ be a solution of the system \eqref{1}, then
\begin{equation}\label{5}
u(z,t)=w(z,t)+v(z,t),
\end{equation}
where $(v,x)\in (L^{2}([0,T],H_0^1[0,l])\cap L^{2q}([0,T],L^{2q}[0,l]))\cap C([0,T],L^2[0,l])\times C([0,T],\mathbb{R}^n)$ is the solution of
\begin{equation}\label{6}
\gathered
v_t(z,t)=a^2v_{zz}(z,t)+f(v(z,t))+B^{\T}(z)x(t)+g(z,t),\\
\dot x(t)=Cx(t)+X(x(t))+\int\limits_0^lD(z)v(z,t)\,dz+p(t).
\endgathered
\end{equation}
with initial condition
\begin{equation}\label{6*}
x(0)=x_0,\quad v(z,0)=\varphi(z)
\end{equation}
and boundary condition
\begin{equation}\label{6**}
v(0,t)=0,\quad v(l,t)=0,
\end{equation}
for  some functions
$w(z,t)$, $g(z,t)$ and $p(t)$ (provided explicitely in the proof) satisfying for almost all $(z,t)\in[0,l]\times\mathbb{R}_+$ the following estimates:
$|w(z,t)|\le d_{\infty}$  and
\begin{equation}\label{7}
|g(z,t)|\le \psi_1(\varepsilon)|v(z,t)|^{2q-1}+\psi_0(\varepsilon,d_{\infty}),\quad \|p(t)\|\le\sqrt{l}\|D\|_{L^2[0,l]}d_{\infty},
\end{equation}
where $\varepsilon>0$ and
\begin{equation*}
\psi_0(\varepsilon,d_{\infty})=2^{2q-3}c_0\varepsilon^{1-2q}\frac{1}{2q-1}d_{\infty}^{2q-1}+(L+c_0)d_{\infty}+2^{2q-3}c_0d_{\infty}^{2q-1},
\end{equation*}
\begin{equation*}
\psi_1(\varepsilon)=2^{2q-3}c_0\frac{2q-2}{2q-1}\varepsilon^{(2q-1)/(2q-2)}.
\end{equation*}
	\end{lemma}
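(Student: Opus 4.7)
My plan is to homogenize the boundary conditions by subtracting a lifting function and then estimate the resulting mismatch. I would take the simplest possible lifting, namely the linear-in-$z$ interpolant
$$w(z,t) := \Bigl(1 - \tfrac{z}{l}\Bigr)d_1(t) + \tfrac{z}{l}\,d_2(t),$$
which matches the Dirichlet data, satisfies $w(z,0)=0$ because $d_i(0)=0$, has $w_{zz}\equiv 0$, and is a convex combination of $d_1(t)$ and $d_2(t)$, so $|w(z,t)|\le d_\infty$ holds pointwise. Then $v:=u-w$ lives in the same function spaces as $u$ (since $w$ is smooth in $z$ and bounded in $t$), satisfies $v(0,t)=v(l,t)=0$ and $v(\cdot,0)=\varphi$, so \eqref{6*} and \eqref{6**} are verified.

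Next, I would substitute $u=v+w$ into the PDE and ODE. Since $w_{zz}\equiv 0$, the parabolic equation becomes
$$v_t = a^2 v_{zz} + f(v) + B^{\T}(z)\,x + g(z,t), \qquad g(z,t) = f(v+w) - f(v) - w_t,$$
and the ODE acquires the extra drift $p(t) := \int_0^l D(z)\,w(z,t)\,dz$. For $p$, the Cauchy--Bunyakovsky inequality \eqref{Holder} together with $|w|\le d_\infty$ yields $\|p(t)\|\le d_\infty\int_0^l\|D(z)\|\,dz\le \sqrt{l}\,\|D\|_{L^2[0,l]}\,d_\infty$. For $g$, I would split $f=f_0+f_1$: the Lipschitz part contributes $|f_0(v+w)-f_0(v)|\le L|w|\le L\,d_\infty$, while the fundamental theorem of calculus gives $|f_1(v+w)-f_1(v)|\le |w|\int_0^1|f_1'(v+sw)|\,ds$, which by \eqref{3***} is bounded by $c_0|w|+c_0|w|\int_0^1|v+sw|^{2q-2}\,ds$. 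Since $q\ge 3/2$ gives $2q-2\ge 1$, the convexity bound $(a+b)^{2q-2}\le 2^{2q-3}(a^{2q-2}+b^{2q-2})$ separates the two arguments and leaves a cross term proportional to $d_\infty\,|v|^{2q-2}$ together with the clean remainders $(L+c_0)d_\infty$ and $2^{2q-3}c_0\,d_\infty^{2q-1}$.

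The last step is to convert the cross term into the advertised $\psi_1(\varepsilon)|v|^{2q-1}$ form. Applying Young's inequality \eqref{Yang} to the product $(\varepsilon|v|^{2q-2})\cdot(\varepsilon^{-1}d_\infty)$ with conjugate exponents $p_1=(2q-1)/(2q-2)$ and $p_2=2q-1$ produces exactly
$$d_\infty\,|v|^{2q-2} \le \tfrac{2q-2}{2q-1}\,\varepsilon^{(2q-1)/(2q-2)}|v|^{2q-1} + \tfrac{1}{2q-1}\,\varepsilon^{1-2q}\,d_\infty^{2q-1},$$
after which the coefficients line up with $\psi_1(\varepsilon)$ and $\psi_0(\varepsilon,d_\infty)$ verbatim. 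The step I expect to require the most care is the $w_t$ contribution to $g$: with the linear interpolant one has $w_t=(1-z/l)\dot d_1+(z/l)\dot d_2$, whose modulus is controlled only by $\|\dot d_i\|_{L^\infty}$ and not by $d_\infty$ itself, so this term cannot appear in the pointwise bound as stated. My expectation is that the proof therefore treats $w_t$ in the weak sense, invoking the integration-by-parts identity \eqref{0.2} to shift $\partial_t$ onto the test function and thereby avoiding any pointwise bound on $w_t$; once this is handled, the remaining bookkeeping is routine and delivers the quoted formulas for $\psi_0$ and $\psi_1$.
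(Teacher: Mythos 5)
Your choice of lifting is where the argument breaks, and your own closing paragraph correctly identifies the symptom but proposes a fix that does not work. With the linear interpolant $w(z,t)=(1-z/l)d_1(t)+(z/l)d_2(t)$ the residual $a^2w_{zz}-w_t=-w_t=-(1-z/l)\dot d_1-(z/l)\dot d_2$ is a genuine source term in the equation for $v$; it is controlled only by $\|\dot d_i\|_{L^\infty}$ and not by $d_\infty$. Moving $\partial_t$ onto a test function via \eqref{0.2} merely rewrites this term in weak form --- it does not make it disappear or shrink, and it would eventually feed into the Lyapunov derivative through $\int_0^l v\,w_t\,dz$ and $x^{\T}\int_0^lP_{12}w_t\,dz$, contaminating the gains $\gamma_i$ with $\|\dot d_i\|_{L^\infty}$ and contradicting the stated form of \eqref{7}, in which $g$ is bounded purely in terms of $d_\infty$ and $|v|$.

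The paper's resolution is to choose $w$ not as the linear interpolant but as the weak solution of the homogeneous heat equation $w_t-a^2w_{zz}=0$ with boundary data $w(0,t)=d_1(t)$, $w(l,t)=d_2(t)$ and zero initial data (problem \eqref{weak1}). Then the entire residual $a^2w_{zz}-w_t$ vanishes, so $g(z,t)=f(v+w)-f(v)$ contains no derivative of the disturbance, and the bound $|w(z,t)|\le d_{\infty}$ follows from the weak maximum principle rather than from convexity of the interpolant. Your linear interpolant is still used in the paper, but only as an auxiliary device: subtracting it from $w$ produces the problem \eqref{weak2} with homogeneous boundary data and an $L^2$ right-hand side built from $\dot d_1,\dot d_2$, whence existence and $H^{2,1}$ regularity of $w$ follow from standard parabolic theory. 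Your estimates for $p(t)$ and for $f(v+w)-f(v)$ (the $f_0/f_1$ splitting, the convexity bound $(a+b)^{2q-2}\le 2^{2q-3}(a^{2q-2}+b^{2q-2})$, and Young's inequality with exponents $(2q-1)/(2q-2)$ and $2q-1$) coincide with the paper's and are fine; only the construction of $w$ needs to be replaced.
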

\begin{proof}

Substituting $u(z,t)=w(z,t)+v(z,t)$ into \eqref{1} we obtain
\begin{equation}\label{8}
\gathered
v_t(z,t)=a^2v_{zz}(z,t)+B^{\T}(z)x(t)+a^2w_{zz}(z,t)-w_t(z,t)+f(v(z,t)+w(z,t)),\\
\dot x(t)=Cx(t)+X(x(t))+\int\limits_0^lD(z)v(z,t)\,dz+\int\limits_0^lD(z)w(z,t)\,dz.
\endgathered
\end{equation}
Let us denote $g(z,t):=f(v(z,t)+w(z,t))-f(v(z,t))$, $p(t):=\int\limits_0^lD(z)w(z,t)\,dz$.

Let $w\in H^{1,1}([0,l]\times[0,T])$ be the weak solution to
\begin{equation}\label{weak1}
\gathered
w_t(z,t)-a^2 w_{zz}(z,t)=0,\\
w(0,t)=d_1(t),\quad w(l,t)=d_2(t),\quad w(z,0)=0,
\endgathered
\end{equation}
by definition this means  that for any $\phi\in C_0^\infty ([0,l]\times[0,T])$ the following equality holds
\begin{equation}\label{weak1bis}
\gathered
\int\limits_0^T\langle w_t(\cdot,t),\phi(\cdot,t)\rangle\,dt+a^2\int\limits_0^T\langle w_{z}(z,t),\phi_z(z,t)\rangle\,dt=0.
\endgathered
\end{equation}
In this case \eqref{weak1bis} holds also for all $\phi\in H^{1,1}_0([0,l]\times[0,T])$.

Let us note that conditions $d_i(0)=0$, $\dot d_i\in L^{\infty}(\mathbb{R}_+)$ guarantee the existence of a weak solution  $w\in L^{2}([0,T],H^1[0,l])\cap L^{2q}([0,T],L^{2q}[0,l])$ to the problem  \eqref{weak1}.
Indeed, the function $\wt w(z,t)=w(z,t)-\Big(\frac{z}{l}d_2(t)+\frac{l-z}{l}d_1(t)\Big)$ is the weak solution to the problem
\begin{equation}\label{weak2}
\gathered
\wt w_t(z,t)-a^2\wt w_{zz}(z,t)=\Delta(z,t),\quad \Delta(z,t)=-\frac{z}{l}\dot d_2(t)-\frac{l-z}{l}\dot d_1(t),\\
\wt w(0,t)=0,\quad \wt w(l,t)=0,\quad \wt w(z,0)=0.
\endgathered
\end{equation}
The problems \eqref{weak1} and \eqref{weak2} are equivalent. Hence it is enough to show the existence of a weak solution to the problem \eqref{weak2}.
Since $\Delta\in L^2([0,l]\times[0,T])$, then by Theorem 4 from \cite{Mik78} (Chapter 6) the existence of a unique solution $\tilde w\in H^{2,1}([0,l]\times[0,T])$ to the problem \eqref{weak2} follows.

By the (weak) maximum principle, see e.g., \S30 in \cite{Vla71}  applied to $w$ it follows that
\begin{equation*}
\gathered
|w(z,t)|\le d_{\infty}\quad\text{for almost all}\quad (z,t)\in[0,l]\times[0,\infty).
\endgathered
\end{equation*}
From the assumption 1)---2) for $f$ we obtain the estimation of $g(z,t)$ for almost all $(z,t)\in[0,l]\times[0,\infty)$:
\begin{equation*}
\gathered
|g(z,t)|\le |f_0(v(z,t)+w(z,t))-f_0(v(z,t))|+|f_1(v(z,t)+w(z,t))-f_1(v(z,t))|\\
\le
L|w(z,t)|+|f^{\prime}(v(z,t)+\eta w(z,t))||w(z,t)|\\
\le Ld_{\infty}+c_0(1+(|v(z,t)|+\eta|w(z,t)|)^{2q-2})d_{\infty},
\endgathered
\end{equation*}
where $\eta\in(0,1)$. Since the function $s\mapsto 1+s^{2q-2},\; s\ge 0$, is convex, we can estimate further
\begin{equation*}
\gathered
|g(z,t)|
\le Ld_{\infty}+c_0(1+(|v(z,t)|+\eta|w(z,t)|)^{2q-2})d_{\infty}\\
\le Ld_{\infty}+c_0(1+2^{2q-3}(|v(z,t)|^{2q-2}+|w(z,t)|^{2q-2}))d_{\infty}\\
\le 2^{2q-3}c_0d_{\infty}|v(z,t)|^{2q-2}+(L+c_0)d_{\infty}+2^{2q-3}c_0d_{\infty}^{2q-1}.
\endgathered
\end{equation*}
By the Young's inequality with  $p_1=(2q-1)/(2q-2)$, $p_2=2q-1$ we get
\begin{equation*}
d_{\infty}|v(z,t)|^{2q-2}\le \frac{2q-2}{2q-1}\varepsilon^{(2q-1)/(2q-2)}|v(z,t)|^{2q-1}+\varepsilon^{1-2q}\frac{1}{2q-1}d_{\infty}^{2q-1},
\end{equation*}
where $\varepsilon>0$.
Finally, we obtain that for almost all $(z,t)\in[0,l]\times[0,\infty)$ it holds that
\begin{equation*}
\gathered
|g(z,t)|
\le 2^{2q-3}c_0\frac{2q-2}{2q-1}\varepsilon^{(2q-1)/(2q-2)}|v(z,t)|^{2q-1}
+c_0\varepsilon^{1-2q}\frac{2^{2q-3}}{2q-1}d_{\infty}^{2q-1}\\
+(L+c_0)d_{\infty}+2^{2q-3}c_0d_{\infty}^{2q-1}
=\psi_1(\varepsilon)|v(z,t)|^{2q-1}+\psi_0(\varepsilon,d_{\infty})
\endgathered
\end{equation*}
To estimate $p(t)$ we use the Cauchy-Bunyakovsky inequality:
\begin{equation*}
\|p(t)\|\le\int\limits_0^l\|D(z)\||w(z,t)|\,dz\le\|D\|_{L^2[0,l]}\|w(\cdot,t)\|_{L^2[0,l]}\le d_{\infty}\sqrt{l}\|D\|_{L^2[0,l]}.
\end{equation*}

Taking into account that $w\in H^{2,1}([0,l]\times[0,T])$
from \eqref{8} follows that $v\in L^2([0,T],H_0^1[0,l])\cap L^{2q}([0,T],L^{2q}[0,l])$ is a weak solution to \eqref{6} satisfying \eqref{6*} and  \eqref{6**}.
This finishes the proof of the lemma.
\end{proof}
To derive estimates of solutions to \eqref{6} we use the direct method of Lyapunov. For this we define
\begin{equation}\label{11}
V(v(\cdot,t),x):=\langle v(\cdot,t),v(\cdot,t)\rangle+2\langle x^{\T}P_{12},v(\cdot,t)\rangle+x^{\T}Px,
\end{equation}
where $P_{12}\in (H^2_0([0,l])^n$ the solution to the boundary value problem
\begin{equation}\label{11bis}
a^2P_{12}^{\prime\prime}(z)+C^{\T}P_{12}(z)=-B(z)-PD(z),\quad P_{12}(0)=P_{12}(l)=0,
\end{equation}
and $P\in\mathbb{R}^{n\times n}$ is a symmetric positive definite matrix.
Also we denote
\begin{equation*}
\Pi_1=\begin{pmatrix}
1&-\|P_{12}\|_{L^2[0,l]}\\
-\|P_{12}\|_{L^2[0,l]}&\lm_{\min}(P)
\end{pmatrix},\quad
\Pi_2=\begin{pmatrix}
1&\|P_{12}\|_{L^2[0,l]}\\
\|P_{12}\|_{L^2[0,l]}&\lm_{\max}(P)
\end{pmatrix}
\end{equation*}
then $V(v(\cdot,t),x)$ can be estimated as
\begin{equation}\label{12}
\lm_{\min}(\Pi_1)(\|v(\cdot,t)\|_{L^2[0,l]}^2+\|x\|^2)\le V(v(\cdot,t),x)\le \lm_{\max}(\Pi_2)(\|v(\cdot,t)\|_{L^2[0,l]}^2+\|x\|^2)
\end{equation}
Now let us consider the derivation of $P_{12}(z)$ as a solution to \eqref{11bis}.
The equation  \eqref{11bis} for $P_{12}(z)$ can be written as
\begin{equation}\label{17}
P_{12}^{\prime\prime}(z)+\frac{1}{a^2}C^{\T} P_{12}(z)=F(z),\quad P_{12}(0)=P_{12}(l)=0,
\end{equation}
where $F(z):=-\frac{1}{a^2}(B(z)+PD(z))$.

Let us first consider the case $C=C^{\T}\succ 0$. Using the functional calculus of matrices we can state the following  

\begin{lemma} Let $C=C^{\T}\succ 0$, $\det\sin(\frac{1}{a}C^{1/2}l)\ne 0$, then the solution of
\begin{equation}\label{18}
\gathered
P_{12}^{\prime\prime}(z)+\frac{1}{a^2}C^{\T}P_{12}(z)=F(z),\quad P_{12}(0)=P_{12}(l)=0
\endgathered
\end{equation}
can be written as
\begin{equation}\label{19}
\gathered
P_{12}(z)=\int\limits_0^lG(z,\xi)F(\xi)d\xi,
\endgathered
\end{equation}
with the Green's function $G(z,\xi)$ given by
\begin{equation*}
G(z,\xi)=\begin{cases}
a\sin(\frac{1}{a}(C^{\T})^{1/2}\xi)\sin(\frac{1}{a}(C^{\T})^{1/2}(z-l))((C^{\T})^{1/2}\sin(\frac{1}{a}(C^{\T})^{1/2}l))^{-1}\\ \text{for } 0\le\xi\le z\le l;\\
a\sin(\frac{1}{a}(C^{\T})^{1/2}z)\sin(\frac{1}{a}(C^{\T})^{1/2}(\xi-l))((C^{\T})^{1/2}\sin(\frac{1}{a}(C^{\T})^{1/2}l))^{-1}\\ \text{for } 0\le z\le \xi\le l.
\end{cases}
\end{equation*}
\end{lemma}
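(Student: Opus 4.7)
The plan is to construct the Green's function by the standard glueing-of-fundamental-solutions recipe for a scalar second-order BVP, carried over to the matrix setting via the functional calculus for the symmetric positive-definite matrix $C^{\T}$.

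First I would invoke the functional calculus: since $C=C^{\T}\succ 0$, the matrices $(C^{\T})^{1/2}$, $\sin\!\bigl(\tfrac{1}{a}(C^{\T})^{1/2}z\bigr)$, $\cos\!\bigl(\tfrac{1}{a}(C^{\T})^{1/2}z\bigr)$ are well defined, are smooth matrix-valued functions of $z$, and they all commute with one another since they are functions of a single matrix $C^{\T}$. In particular the classical identities $\frac{d}{dz}\sin(\tfrac{1}{a}(C^{\T})^{1/2}z)=\tfrac{1}{a}(C^{\T})^{1/2}\cos(\tfrac{1}{a}(C^{\T})^{1/2}z)$, the analogous one for cosine, and the subtraction formula $\sin A\cos B-\cos A\sin B=\sin(A-B)$ for $A,B$ commuting, all hold. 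The hypothesis $\det\sin(\tfrac{1}{a}C^{1/2}l)\ne 0$ ensures the factor $\bigl((C^{\T})^{1/2}\sin(\tfrac{1}{a}(C^{\T})^{1/2}l)\bigr)^{-1}$ in $G(z,\xi)$ is well defined.

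Next I would verify by direct computation that the $G(z,\xi)$ displayed in the statement satisfies the four defining properties of a Green's function for the operator $\frac{d^2}{dz^2}+\frac{1}{a^2}C^{\T}$ with Dirichlet data on $[0,l]$:
\textbf{(i) Homogeneous equation off-diagonal.} For $z\ne\xi$ each branch of $G$ is, as a function of $z$, a matrix-valued linear combination of $\sin(\tfrac{1}{a}(C^{\T})^{1/2}z)$ and $\sin(\tfrac{1}{a}(C^{\T})^{1/2}(z-l))$, hence is annihilated by $\partial_{zz}+\tfrac{1}{a^2}C^{\T}$.
\textbf{(ii) Boundary conditions.} At $z=0$ we use the branch $z\le\xi$, which contains the factor $\sin(\tfrac{1}{a}(C^{\T})^{1/2}z)$ vanishing at $z=0$; at $z=l$ we use the branch $\xi\le z$, containing $\sin(\tfrac{1}{a}(C^{\T})^{1/2}(z-l))$ which vanishes at $z=l$.
\textbf{(iii) Continuity at $z=\xi$.} Plugging $z=\xi$ into both branches yields the same expression, which is immediate.
\textbf{(iv) Unit jump of $\partial_z G$ at $z=\xi$.} Differentiating both branches in $z$, evaluating at $z=\xi$, subtracting, and using the sine subtraction formula with $A=\tfrac{1}{a}(C^{\T})^{1/2}\xi$ and $B=\tfrac{1}{a}(C^{\T})^{1/2}(\xi-l)$ gives $A-B=\tfrac{1}{a}(C^{\T})^{1/2}l$; multiplying by the common factor $a$ and by the prescribed inverse collapses everything to the identity matrix $I$.

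Finally I would conclude in the standard way: set $P_{12}(z):=\int_0^l G(z,\xi)F(\xi)\,d\xi$, differentiate under the integral, and use the jump relation (iv) to produce the inhomogeneity $F(z)$; together with (i) this gives $P_{12}''(z)+\tfrac{1}{a^2}C^{\T}P_{12}(z)=F(z)$, while (ii) forces $P_{12}(0)=P_{12}(l)=0$. I expect the main point that requires care, rather than calculation, is to justify manipulating $\sin$ and $\cos$ of $(C^{\T})^{1/2}$ by scalar trigonometric identities; this is where I would explicitly appeal to the commutativity of functions of a single self-adjoint matrix. Everything else reduces to scalar-style verification applied entry by entry in the simultaneous diagonalization of $C^{\T}$.
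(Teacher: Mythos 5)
Your proof is correct, but it takes a different route from the paper. You verify directly that the displayed kernel $G(z,\xi)$ has the four defining properties of a Green's function for $\partial_{zz}+\tfrac{1}{a^2}C^{\T}$ with Dirichlet data (homogeneous equation off the diagonal, boundary conditions, continuity, unit jump of $\partial_z G$), and then conclude by differentiating under the integral. The paper instead \emph{derives} the formula: it makes the variation-of-constants ansatz $P_{12}(z)=\sin(\tfrac{1}{a}(C^{\T})^{1/2}z)C_1(z)+\cos(\tfrac{1}{a}(C^{\T})^{1/2}z)C_2(z)$ with the usual side constraint, solves for $C_1'$, $C_2'$, integrates, fixes $C_1(0)$ and $C_2(0)$ from the boundary conditions (this is where $\det\sin(\tfrac{1}{a}C^{1/2}l)\ne 0$ enters), and reassembles the result into the form \eqref{19}; only at the very end does it appeal to ``properties of the Green's function'' for the final verification. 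Your check of the jump condition is right: the difference of the one-sided derivatives at $z=\xi$ is $\bigl[\sin(\tfrac{1}{a}(C^{\T})^{1/2}\xi)\cos(\tfrac{1}{a}(C^{\T})^{1/2}(\xi-l))-\cos(\tfrac{1}{a}(C^{\T})^{1/2}\xi)\sin(\tfrac{1}{a}(C^{\T})^{1/2}(\xi-l))\bigr](C^{\T})^{1/2}\bigl((C^{\T})^{1/2}\sin(\tfrac{1}{a}(C^{\T})^{1/2}l)\bigr)^{-1}=I$, using commutativity of functions of the single matrix $C^{\T}$, exactly as you say. The trade-off: your argument is shorter and self-contained given the explicit kernel, but it does not explain where $G$ comes from; the paper's derivation produces the kernel constructively, which is what one actually needs when setting up the Lyapunov function in the first place. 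Both arguments use the hypothesis $\det\sin(\tfrac{1}{a}C^{1/2}l)\ne 0$ in the same essential way, to invert $(C^{\T})^{1/2}\sin(\tfrac{1}{a}(C^{\T})^{1/2}l)$.
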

\begin{proof} By the variation of constants method we look for a solution to
\eqref{18} in the form
\begin{equation*}
\gathered
P_{12}(z)=
\sin(\frac{1}{a}(C^{\T})^{1/2}z)C_1(z)+\cos(\frac{1}{a}(C^{\T})^{1/2}z)C_2(z)
\endgathered
\end{equation*}
where vectors $C_1$ and $C_2$ satisfy
\begin{equation*}
\sin(\frac{1}{a}C^{\T})^{1/2}z)C_1^{\prime}(z)+\cos(\frac{1}{a}(C^{\T})^{1/2}z)C_2^{\prime}(z)=0.
\end{equation*}
Substituting this $P_{12}$ into \eqref{18} we obtain that $C_1$ and $C_2$ must satisfy
\begin{equation*}
\frac{1}{a}(C^{\T})^{1/2}\Big(\cos(\frac{1}{a}(C^{\T})^{1/2}z)C_1^{\prime}(z)-\sin(\frac{1}{a}(C^{\T})^{1/2}z)C_2^{\prime}(z)\Big)=F(z).
\end{equation*}
This implies that
\begin{equation*}
\gathered
C_1^{\prime}(z)=a(C^{\T})^{-1/2}\cos(\frac{1}{a}(C^{\T})^{1/2}z)F(z),\\
C_2^{\prime}(z)=-a(C^{\T})^{-1/2}\sin(\frac{1}{a}(C^{\T})^{1/2}z)F(z).
\endgathered
\end{equation*}
After integration we get
\begin{equation*}
\gathered
C_1(z)=C_1(0)+a(C^{\T})^{-1/2}\int\limits_0^z\cos(\frac{1}{a}(C^{\T})^{1/2}\xi)F(\xi)d\xi,\\
C_2(z)=C_2(0)-a(C^{\T})^{-1/2}\int\limits_0^z\sin(\frac{1}{a}(C^{\T})^{1/2}\xi)F(\zeta)d\xi
\endgathered
\end{equation*}
Consequently,
\begin{equation}\label{20}
\gathered
P_{12}(z)=
\sin(\frac{1}{a}(C^{\T})^{1/2}z)C_1(0)+\cos(\frac{1}{a}(C^{\T})^{1/2}z)C_2(0)\\
+a(C^{\T})^{-1/2}\int\limits_0^z\sin(\frac{1}{a}(C^{\T})^{1/2}(z-\xi))F(\xi)\,d\xi.
\endgathered
\end{equation}
From the boundary conditions $P_{12}(0)=0$, $P_{12}(l)=0$ follows
$C_2(0)=0$ and
\begin{equation*}
\gathered
C_1(0)=-a(C^{\T})^{-1/2}\int\limits_0^l(\sin(\frac{1}{a}(C^{\T})^{1/2}l))^{-1}\sin(\frac{1}{a}(C^{\T})^{1/2}(l-\xi))F(\xi)\,d\xi\\
=-a(C^{\T})^{-1/2}\int\limits_0^l\Big(\cos(\frac{1}{a}(C^{\T})^{1/2}\xi)-
\cot(\frac{1}{a}(C^{\T})^{1/2}l)\sin(\frac{1}{a}(C^{\T})^{1/2}\xi)\Big)F(\xi)\,d\xi
\endgathered
\end{equation*}
Substituting these $C_1(0)$ and $C_2(0)$ into  \eqref{20} we obtain \eqref{19}.
By properties of the Green's function it follows that this $P_{12}$ defined by \eqref{19}
satisfies \eqref{18}.
\end{proof}
Now we consider the case when $C^{\T}=C\succ 0$ is not true. In this case the \eqref{19} is the solution to \eqref{18}, if in the expression for the Green-function we adopt that
\begin{equation*}
\gathered
(C^{\T})^{1/2}\sin\Big(\frac{l}{a}(C^{\T})^{1/2}\Big)=\sum\limits_{k=0}^{\infty}\frac{(-1)^{k}(l/a)^{2k+1}}{(2k+1)!}(C^{\T})^{k+1},\\
\sin((C^{\T})^{1/2}\frac{\xi}{a})\sin((C^{\T})^{1/2}\frac{(z-l)}{a})\\
=\sum\limits_{k=0}^{\infty}\sum\limits_{p=0}^{\infty}\frac{(-1)^{k+p}}{(2k+1)!(2p+1)!}\Big(\frac{\xi}{a}\Big)^{2k+1}\Big(\frac{z-l}{a}\Big)^{2p+1}(C^{\T})^{k+p+1}.
\endgathered
\end{equation*}
These series can be calculated as follows. Let $T$ be nonsingular matrix such that $T^{-1}C^{\T}T=\text{diag}\{J_{n_1}(\lambda_1),\dots,J_{n_r}(\lambda_r)\}$ is the Jordan normal form of $C^{\T}$.
Then these series can be calculated separately for each Jordan block  $J_{n_k}(\lambda_k)$ explicitly.

\section{Main results}\label{sec:main}
Our first result provides conditions to guarantee that our system is well-posed. The second result establishes the ISS property.
\begin{theorem}\label{main-theorem}
Assume that

1) $f$ and $X$ satisfy conditions 1)---4) from section 2 and  $f^{\prime}_1(s)<0$ $\forall$ $s\in\mathbb{R}$, $s\ne 0$;

2) $\Pi_1$ is positive definite and for some $\delta_1>0$
\begin{equation*}
\gathered
x^{\T}PX(x)\le-\delta_1\|x\|^{2q},\quad x\in\mathbb{R}^n,
\endgathered
\end{equation*}

3) the following conditions are satisfied
\begin{equation}\label{24}
\gathered
\omega:=2\Big(\frac{\pi^2a^2}{l^2}-\|D\|_{L^2[0,l]}\|P_{12}\|_{L^2[0,l]}-\sigma\Big)>0,\\
\Omega:=-\Big(C^{\T}P+PC
-\frac{1}{a^2}\int\limits_0^l\int\limits_0^l(G(z,\zeta)(B(\zeta)+PD(\zeta))B^{\T}(z)\\
+B(z)(B^{\T}(\zeta)+D^{\T}(\zeta)P)G^{\T}(z,\zeta))d\zeta dz\Big)\succ 0,
\endgathered
\end{equation}

4) the matrix
\begin{equation*}
\gathered
\Xi=\begin{pmatrix}
\omega&-L\|P_{12}\|_{L^2[0,l]}\\
-L\|P_{12}\|_{L^2[0,l]}&\lambda_{\min}(\Omega)
\end{pmatrix}
\endgathered
\end{equation*}
is positive definite.

5) Let (the unique pair) $\tau_1>0$ and $\tau_2>0$ satisfying
$$\frac{\zeta\|P_{12}\|_{L^1[0,l]}}{q}\tau_1^{2q}+\frac{\delta_2\|P_{12}\|_{L^2[0,l]}(2q-1)}{q}\tau_1^{2q/(2q-1)}=2\delta_1$$
$$\frac{\delta_2\|P_{12}\|_{L^2[0,l]}l^{q-1}}{q}\tau_2^{-2q}+\frac{\zeta(2q-1)\|P_{12}\|_{L^{\infty}[0,l]}}{q}\tau_2^{-2q/(2q-1)}=2\alpha$$
be such that $\tau_2<\tau_1$.

Then there exist $u\in L^{2}([0,T],H^1[0,l])\cap L^{2q}([0,T],L^{2q}[0,l])$ and $x\in C([0,T],\mathbb{R}^n)$ solving the problem \eqref{1} -- \eqref{3} for any $T>0$.
\end{theorem}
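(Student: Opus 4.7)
The plan is to combine the decomposition from the preliminary lemma with a Faedo--Galerkin scheme, derive uniform a priori estimates from the Lyapunov function $V$ introduced in \eqref{11}, and then extract a weakly convergent subsequence whose limit is the desired weak solution. First I would apply the lemma to write $u=w+v$, reducing the original problem with inhomogeneous boundary data to problem \eqref{6}--\eqref{6**} for $(v,x)$ with homogeneous Dirichlet data, a perturbation $g$ satisfying \eqref{7}, and a bounded forcing $p(t)$ on the ODE. Next I would take the orthonormal basis $\{e_k(z)=\sqrt{2/l}\sin(k\pi z/l)\}_{k\ge 1}$ of $L^2[0,l]$ (which lies in $H_0^1\cap H^2$) and search for Galerkin approximants $v_N(z,t)=\sum_{k=1}^N c_k^N(t)e_k(z)$ together with $x_N\in\mathbb R^n$, solving a finite-dimensional ODE system obtained by projecting \eqref{6} onto $\mathrm{span}\{e_1,\dots,e_N\}$. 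Local existence follows from the Cauchy--Peano/Picard theorem because the nonlinearities $f$ and $X$ are continuous.

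The central step is to differentiate $V(v_N,x_N)$ along the approximate system. Using \eqref{11bis} to cancel the cross terms produced by $a^2 v_{zz}$, $Cx$ and the couplings $B^{\T}x$, $\int D v\,dz$, one obtains, after integration by parts in $z$, a dissipation inequality of the form
\begin{equation*}
\tfrac{d}{dt}V(v_N,x_N)\le -\,(\|v_N\|_{L^2},\|x_N\|)\,\Xi\,(\|v_N\|_{L^2},\|x_N\|)^{\T}+\mathcal{N}(v_N,x_N)+\mathcal{R}(d_\infty),
\end{equation*}
where the positivity of the Poincar\'e constant $\pi^2a^2/l^2$ together with the bound $\|D\|_{L^2}\|P_{12}\|_{L^2}$ produces $\omega$, the ODE dissipation produces $\lambda_{\min}(\Omega)$ (thanks to the Green's function identity), the Lipschitz cross term of $f_0$ produces $L\|P_{12}\|_{L^2}$, and $\mathcal{N}$ collects the superlinear terms coming from $f_1$, $X$ and $g$ via assumptions \eqref{3**}--\eqref{3****} and \eqref{7}. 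The conditions 2) and 3) give $\Xi\succ 0$, while the pair $(\tau_1,\tau_2)$ from condition 5) is exactly what is needed to apply Young's inequality in two different ways so that $\mathcal{N}(v_N,x_N)$ is absorbed by the $-\alpha\int|v_N|^{2q}dz$ and $-\delta_1\|x_N\|^{2q}$ contributions already present (from \eqref{3**} and 2)); the residual $\mathcal{R}(d_\infty)$ depends only on $d_\infty$ through $\psi_0$ and $p$. Together with \eqref{12} this yields uniform bounds
\begin{equation*}
v_N\in L^\infty([0,T],L^2[0,l])\cap L^2([0,T],H_0^1[0,l])\cap L^{2q}([0,T],L^{2q}[0,l]),\quad x_N\in L^\infty([0,T],\mathbb R^n),
\end{equation*}
independent of $N$, hence global extension to $[0,T]$ for arbitrary $T>0$.

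The final step is compactness and passage to the limit. The bounds above give $\partial_t v_N$ bounded in $L^{p'}([0,T],H^{-1}[0,l]+L^{(2q)'}[0,l])$ via duality in \eqref{6}, so Theorem~\ref{teorema-o-kompakt} (with $\mathcal X_1=H_0^1$, $\mathcal X=L^2$, $\mathcal X_0$ suitably chosen) produces a subsequence converging strongly in $L^2([0,T],L^2[0,l])$, hence a.e.\ on $[0,l]\times[0,T]$; weak convergence in $L^2([0,T],H_0^1)$ and $L^{2q}([0,T],L^{2q})$ can also be extracted. For the ODE, $x_N$ is uniformly bounded with equicontinuous derivatives (the integrals against $D$ pass through $L^2$-weak convergence), so Arzel\`a--Ascoli gives uniform convergence. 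A.e.\ convergence of $v_N$ combined with the growth bounds \eqref{3***} and Vitali's theorem then allows passing to the limit in the nonlinear term $f(v_N)$ in the weak formulation \eqref{1bis}, and similarly for $X(x_N)$ by continuity; the linear terms pass by weak convergence. Finally, continuity $v\in C([0,T],L^2[0,l])$ and the initial condition $v(\cdot,0)=\varphi$ follow from \eqref{0.2} in the standard way.

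\textbf{Main obstacle.} The delicate point is not the Galerkin machinery itself but the dissipation estimate: one must choose the Young's-inequality exponents so that the superlinear terms in $g$ (with amplitudes $\psi_1(\varepsilon)$) and in $X$ are truly dominated by $-\alpha|v|^{2q}$ and $-\delta_1\|x\|^{2q}$ simultaneously. This is precisely what condition 5) encodes: the two defining equations for $\tau_1,\tau_2$ express balance between these absorptions and the cross-contribution $\zeta\int|v|^{2q-1}|x^{\T}P_{12}|dz+\delta_2\|P_{12}\|\|x\|^{2q-1}\|v\|$, and the requirement $\tau_2<\tau_1$ ensures that a common choice of free parameters exists making both absorptions effective. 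Verifying this balance rigorously, and coupling it cleanly with the positivity of $\Xi$ for the quadratic part, is the most technical portion of the argument.
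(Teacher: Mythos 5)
Your proposal is correct and follows the same overall skeleton as the paper's Appendix A (Faedo--Galerkin approximation in the sine basis, a priori bounds from the Lyapunov function $V$ of \eqref{11} with the cancellations provided by \eqref{11bis}, Aubin--Lions compactness via Theorem~\ref{teorema-o-kompakt}, Arzel\`a--Ascoli for $x_N$), but it diverges from the paper in one substantive step: the identification of the weak limit of the nonlinearity. The paper keeps only weak $L^{2q/(2q-1)}$ convergence of $f_1(u_N+H)$ to some $\mu_1$ and then identifies $\mu_1=f_1(u+H)$ by a Minty--Browder monotonicity argument (the quantity $\varpi_m$ in \eqref{A33}--\eqref{A40}), which is exactly where the hypothesis $f_1'(s)<0$ is consumed; this occupies most of the appendix. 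You instead exploit the strong $L^2L^2$ convergence (hence a.e.\ convergence of a subsequence) already delivered by Aubin--Lions, combine it with the $L^{2q/(2q-1)}$ bound from \eqref{3***}, and conclude by Vitali's theorem (equivalently, Lions' classical lemma: a.e.\ convergence plus a bound in $L^p$, $p>1$, implies weak $L^p$ convergence to the pointwise limit). This is shorter, more elementary, and does not use the monotonicity of $f_1$ at all, so under your route that part of hypothesis 1) becomes superfluous for existence; the paper's monotonicity route is the more robust one in situations where strong compactness is unavailable, but here it is available. Two smaller differences: you homogenize the boundary data with the heat-equation solution $w$ from the preliminary lemma, whereas the appendix uses the linear interpolant $H(z,t)=\frac{z}{l}d_2+\frac{l-z}{l}d_1$ (both work; the latter avoids invoking parabolic regularity at this stage); and your sketch omits the projection-error terms $(\Pi_N-I)B$, $(\Pi_N-I)P_{12}$ that arise because the cancellation \eqref{11bis} is only exact in the limit --- the paper spends some effort showing these vanish as $N\to\infty$ so that the quadratic form governed by $\Xi$ stays negative definite for $N\ge N^*$. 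That is a routine perturbation, not a gap, but it should be tracked in a full write-up. Incidentally, your choice $\mathcal X_1=H_0^1$, $\mathcal X=L^2$ in Theorem~\ref{teorema-o-kompakt} is the right one, since the embedding must be compact.
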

\begin{remark}
Note that $u\in L^{2}([0,T],H^1[0,l])\cap L^{2q}([0,T],L^{2q}[0,l])$ and $\partial_t u\in
L^{2}([0,T],H^{-1}[0,l])\cap L^{\frac{2q}{2q-1}}([0,T],L^{\frac{2q}{2q-1}}[0,l])$ and hence by \cite{babin-v} $u$ is absolutely continuous. 
\end{remark}
The proof follows the ideas of the proof of Theorem 3.1 in \cite{babin-v} on page 38 (see also \cite{lions}) and is based on the Lyapunov function defined in Section 4. We postpone the detailed proof to the Appendix A and state our second main result as follows,

\begin{theorem}\label{th2}
	Let \eqref{1}--\eqref{3} be such that $f$ and $X(x)$ satisfy assumptions 1)--4) of Section 2 and let conditions 2)--5) from Theorem \ref{main-theorem} be satisfied.
Assume that for any initial condition \eqref{2} and disturbances \eqref{3}  there exist a solution $u\in L^{2}([0,T],H^1[0,l])\cap L^{2q}([0,T],L^{2q}[0,l])$ and $x\in C([0,T],\mathbb{R}^n)$  for any $T>0$. Then  \eqref{1}--\eqref{3} is ISS from $(d_1,d_2)$ to $(u,x)$.
\end{theorem}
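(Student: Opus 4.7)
The plan is to apply the coercive Lyapunov function $V(v(\cdot,t),x)$ from (11) to the transformed system (6), derive a differential inequality of the form $\dot V\le -\mu V+\gamma(d_\infty)$, integrate it via Gronwall, and then translate the resulting ISS bound for $(v,x)$ into one for $(u,x)$ using the lift $u=w+v$ and $\|w(\cdot,t)\|_{L^2[0,l]}\le\sqrt l\,d_\infty$ from the Lemma of Section 4.

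First I would compute $\dot V$ along trajectories of (6). Using the integration-by-parts formula (0.2) and the equations (6) one obtains, almost everywhere in $t$,
$$
\dot V=2\langle v,a^2v_{zz}+f(v)+B^{\T}x+g\rangle+2\dot x^{\T}\!\!\int_0^l\!\!P_{12}v\,dz+2x^{\T}\!\!\int_0^l\!\!P_{12}v_t\,dz+2x^{\T}P\dot x.
$$
Integrating by parts twice in the term $2a^2x^{\T}\!\int P_{12}v_{zz}dz$ (legal because both $v$ and $P_{12}$ vanish at $z=0,l$) and exploiting the defining identity (11bis) $a^2P_{12}''+C^{\T}P_{12}=-B-PD$ cancels exactly the cross terms $2x^{\T}C^{\T}\!\int P_{12}v$, $2x^{\T}\!\int Bv$, and $2x^{\T}P\!\int Dv$. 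What remains from the pure $x$-sector is precisely $-x^{\T}\Omega x$ with $\Omega$ from (24). Applying Poincaré's inequality to $\|v_z\|_{L^2}^2$, the dissipation bound (3**) on $sf(s)$, and assumption 2) of Theorem~\ref{main-theorem} on $x^{\T}PX(x)$, one reaches
$$
\dot V\le -\omega\|v\|_{L^2[0,l]}^2-\lambda_{\min}(\Omega)\|x\|^2-2\alpha\|v\|_{L^{2q}[0,l]}^{2q}-2\delta_1\|x\|^{2q}+\mathcal R(v,x,d_\infty),
$$
where $\mathcal R$ gathers the surviving cross and forcing terms: the quadratic cross coupling $2x^{\T}\!\int P_{12}f_0(v)dz$ bounded by $2L\|P_{12}\|_{L^2}\|x\|\|v\|_{L^2}$ (via (3*)), the super-quadratic cross terms $2x^{\T}\!\int P_{12}f_1(v)dz$ and $2X(x)^{\T}\!\int P_{12}v\,dz$ (controlled via (3***) and (3****)), and all contributions involving $g$ and $p$, which by (7) and Young's inequality split into fractions of $\|v\|_{L^{2q}}^{2q}$, $\|v\|_{L^2}^2$, $\|x\|^2$ plus a $\mathcal K$-function of $d_\infty$.

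The heart of the proof is to absorb $\mathcal R$ into the negative-definite terms. The pure quadratic block combines with $\omega\|v\|_{L^2}^2+\lambda_{\min}(\Omega)\|x\|^2$ into the quadratic form with matrix $\Xi$; assumption 4) of Theorem~\ref{main-theorem} gives $\Xi\succ 0$ and hence a bound $\ge c_1(\|v\|_{L^2}^2+\|x\|^2)$. The super-quadratic cross terms $\zeta\|x\|\!\int|P_{12}||v|^{2q-1}dz$ and $\delta_2\|P_{12}\|_{L^2}\|x\|^{2q-1}\|v\|_{L^2}$ are split by two applications of Young's inequality with parameters chosen from the interval $(\tau_2,\tau_1)$: condition 5) is precisely the statement that the dissipation rates $2\delta_1$ and $2\alpha$ are sufficient to swallow both halves simultaneously with strictly positive margin. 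The $g,p$-terms yield $\eta(\|v\|_{L^2}^2+\|x\|^2+\|v\|_{L^{2q}}^{2q})+C_\eta\gamma_0(d_\infty)$; choosing $\eta$ small leaves a differential inequality
$$
\dot V\le -\kappa\bigl(\|v\|_{L^2[0,l]}^2+\|x\|^2\bigr)+\gamma(d_\infty),\qquad \gamma\in\mathcal K.
$$

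Using the coercivity estimate (12) this becomes $\dot V\le -\mu V+\gamma(d_\infty)$ with $\mu=\kappa/\lambda_{\max}(\Pi_2)$. Integrating gives $V(t)\le V(0)e^{-\mu t}+\mu^{-1}\gamma(d_\infty)$, and inverting (12) yields
$$
\|v(\cdot,t)\|_{L^2[0,l]}^2+\|x(t)\|^2\le\frac{\lambda_{\max}(\Pi_2)}{\lambda_{\min}(\Pi_1)}\bigl(\|\varphi\|_{L^2[0,l]}^2+\|x_0\|^2\bigr)e^{-\mu t}+\frac{\gamma(d_\infty)}{\mu\lambda_{\min}(\Pi_1)}.
$$
Since $\|u(\cdot,t)\|_{L^2[0,l]}\le\|v(\cdot,t)\|_{L^2[0,l]}+\sqrt l\,d_\infty$ by the Lemma of Section 4, one reads off functions $\beta_1,\beta_2\in\mathcal K\mathcal K\mathcal L$ and $\gamma_1,\gamma_2\in\mathcal K$ realizing (ISS). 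The main obstacle is the bookkeeping in the $\dot V$ computation: justifying it in the weak sense via (0.2), keeping track of the sign on every residual piece, and verifying that the Young-inequality splitting admissible under $\tau_2<\tau_1$ really does leave a strictly negative coefficient in front of both $\|v\|_{L^{2q}}^{2q}$ and $\|x\|^{2q}$; the positive definiteness of $\Xi$ handles the quadratic coupling, while condition 5) handles the super-quadratic one.
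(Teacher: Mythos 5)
Your proposal is correct and follows essentially the same route as the paper's proof: the same decomposition $u=w+v$, the same Lyapunov function $V$ from \eqref{11} with the cancellation coming from \eqref{11bis}, the same use of conditions 4) and 5) to make the quadratic form $\Xi$ and the $2q$-order coefficients strictly negative, and the same Gr\"onwall/coercivity conclusion (the paper absorbs the linear-in-norm residuals via the elementary inequality \eqref{ElN} rather than a small-$\eta$ Young splitting, which is an immaterial difference).
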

\begin{remark}
	Let us note that conditions  2)--5) from Theorem \ref{main-theorem}  can be verified by means of  standard numerical tools available in Matlab or Maple.
\end{remark}
\begin{proof} 
	Using Lemma 2.2 from \cite{DKS20} we calculate the time derivative of  $V(v(\cdot,t),x)$ along solutions to
	\eqref{6}:
	\begin{equation}\label{13}
	\dot V(v(\cdot,t),x)=W_1+W_2+W_3+W_4,
	\end{equation}
	where we have denoted (see detailed calculations in Appendix \ref{detailed-calculations})
	\begin{equation}\label{14}
	\gathered
	W_1=-2a^2\langle v_{z}(\cdot,t),v_{z}(\cdot,t)\rangle+2\int\limits_{0}^lv(z,t)f(v(z,t))\,dz\\
	+2\int\limits_0^lv(z,t)D^{\T}(z)\,dz\int\limits_0^lP_{12}(z)v(z,t)\,dz,
	\endgathered
	\end{equation}
	\begin{equation}\label{15}
	\gathered
	W_2=x^{\T}\Big(C^{\T}P+PC+\int\limits_{0}^l(P_{12}(z)B^{\T}(z)+B(z)P_{12}^{\T}(z))\,dz\Big)x\\
	+2x^{\T}PX(x),
	\endgathered
	\end{equation}
	\begin{equation}\label{16}
	\gathered
	W_3=2\int\limits_0^lv(z,t)g(z,t)\,dz+2p^{\T}(t)\int\limits_0^lP_{12}(z)v(z,t)\,dz\\
	+2x^{\T}\int\limits_{0}^lP_{12}(z)g(z,t)\,dz+2x^{\T}Pp(t),
	\endgathered
	\end{equation}
	\begin{equation}\label{16*}
	\gathered
	W_4=2X^{\T}(x)\int\limits_0^lP_{12}(z)v(z,t)\,dz+2x^{\T}\int\limits_0^lP_{12}(z)f(v(z,t))\,dz.
	\endgathered
	\end{equation}
Using the Friedrich's inequality
	\begin{equation*}
	\gathered
	-\langle v_{z}(\cdot,t),v_{z}(\cdot,t)\rangle=-\int\limits_0^lv_z^2(z,t)\,dz\le-\frac{\pi^2}{l^2}\|v(\cdot,t)\|_{L^2[0,l]}^2
	\endgathered
	\end{equation*}
	and the Cauchy-Bunyakovskiy inequality
	\begin{equation*}
	\gathered
	\int\limits_0^lv(z,t)D^{\T}(z)\,dz\int\limits_0^lP_{12}(z)v(z,t)\,dz\le\|D\|_{L^2[0,l]}\|P_{12}\|_{L^2[0,l]}\|v(\cdot,t)\|_{L^2[0,l]}^2
	\endgathered
	\end{equation*}
we can estimate $W_1$ with help of integration by parts as follows
	\begin{equation*}
	W_1\le \Big(-\frac{2\pi^2a^2}{l^2}+2\|D\|_{L^2[0,l]}\|P_{12}\|_{L^2[0,l]}\Big)\|v(\cdot,t)\|_{L^2[0,l]}^2+2\int\limits_{0}^lv(z,t)f(v(z,t))\,dz.
	\end{equation*}
	By the assumption 2) of Section 3 we obtain that
	\begin{equation}\label{16bis}
	W_1\le \Big(-\frac{2\pi^2a^2}{l^2}+2\|D\|_{L^2[0,l]}\|P_{12}\|_{L^2[0,l]}+2\sigma\Big)\|v(\cdot,t)\|_{L^2[0,l]}^2-2\alpha\|v(\cdot,t)\|_{L^{2q}[0,l]}^{2q}.
	\end{equation}
		Let us estimate $W_3$ in \eqref{13}. Applying the first inequality from \eqref{7} we obtain
\begin{equation*}
\gathered
\int\limits_0^l v(z,t)g(z,t)\,dz\le\psi_0(\varepsilon,d_{\infty})\int\limits_0^l|v(z,t)|\,dz+\psi_1(\varepsilon)\int\limits_0^l|v(z,t)|^{2q}\,dz\\
\le\psi_0(\varepsilon,d_{\infty})\sqrt{l}\|v(\cdot,t)\|_{L^2[0,l]}+\psi_1(\varepsilon)\|v(\cdot,t)\|_{L^{2q}[0,1]}^{2q}.
\endgathered
\end{equation*}
By the triangle inequality and the Cauchy-Bunyakovskiy inequality we have
\begin{equation*}
\gathered
\Big\|\int\limits_0^lP_{12}(z)g(z,t)\,dz\Big\|\le\int\limits_0^l\|P_{12}(z)\|(\psi_0(\varepsilon,d_{\infty})+\psi_1(\varepsilon)|v(z,t)|^{2q-1})\,dz\\
\le \sqrt{l}\|P_{12}\|_{L^2[0,l]}\psi_0(\varepsilon,d_{\infty})+\psi_1(\varepsilon)\int\limits_0^l\|P_{12}(z)\||v(z,t)|^{2q-1}\,dz,
\endgathered
\end{equation*}
and by the Cauchy-Bunyakovskiy inequality we estimate
\begin{equation*}
\gathered
x^{\T}\int\limits_0^lP_{12}(z)g(z,t)\,dz\\
\le
\sqrt{l}\|P_{12}\|_{L^2[0,l]}\psi_0(\varepsilon,d_{\infty})\|x\|+\psi_1(\varepsilon)\int\limits_0^l\|P_{12}(z)\|\|x\||v(z,t)|^{2q-1}\,dz .
\endgathered
\end{equation*}
By the Young's inequality \eqref{Yang} with $p_1=2q$, $p_2=2q/(2q-1)$, we have
\begin{equation*}
\|x\||v(z,t)|^{2q-1}
\le \frac{1}{2q}\|x\|^{2q}+\Big(1-\frac{1}{2q}\Big)|v(z,t)|^{2q}.
\end{equation*}
Hence we obtain
\begin{equation*}
\gathered
x^{\T}\int\limits_0^lP_{12}(z)g(z,t)\,dz\\
\le
\sqrt{l}\|P_{12}\|_{L^2[0,l]}\psi_0(\varepsilon,d_{\infty})\|x\|+
\frac{1}{2q}\psi_1(\varepsilon)\|P_{12}\|_{L^1[0,l]}\|x\|^{2q}\\
+\Big(1-\frac{1}{2q}\Big)\psi_1(\varepsilon)\|P_{12}\|_{L^{\infty}[0,l]}\|v(\cdot,t)\|_{L^{2q}[0,l]}^{2q}.
\endgathered
\end{equation*}
By the Cauchy-Bunyakovskiy inequality we have
\begin{equation*}
\gathered
\Big|p^{\T}(t)\int\limits_0^lP_{12}(z)v(z,t)\,dz\Big|\le
d_{\infty}\sqrt{l}\|D\|_{L^2[0,l]}\|P_{12}\|_{L^2[0,l]}\|v(\cdot,t)\|_{L^2[0,l]},
\endgathered
\end{equation*}
\begin{equation*}
\gathered
x^{\T}Pp(t)\le\sqrt{l}\|P\|d_{\infty}\|D\|_{L^2[0,l]}\|x\|.
\endgathered
\end{equation*}
Hence,
\begin{equation}\label{W3}
\gathered
W_3\le 2\sqrt{l}(\psi_0(\varepsilon,d_{\infty})+d_{\infty}\|D\|_{L^2[0,l]}\|P_{12}\|_{L^2[0,l]})\|v(\cdot,t)\|_{L^2[0,l]}\\
+2\sqrt{l}(\|P_{12}\|_{L^2[0,l]}\psi_0(\varepsilon,d_{\infty})+\|P\|\|D\|_{L^2[0,l]}d_{\infty})
\|x\|\\
+2\psi_1(\varepsilon)\Big(1+\frac{2q-1}{2q}\|P_{12}\|_{L^{\infty}[0,l]}\Big)\|v(\cdot,t)\|_{L^{2q}[0,l]}^{2q}
+\frac{1}{q}\psi_1(\varepsilon)\|P_{12}\|_{L^1[0,l]}\|x\|^{2q}.
\endgathered
\end{equation}
To estimate $W_4$ we apply the Cauchy-Bunyakovskiy inequality to \eqref{16*} and using \eqref{3*}-\eqref{3****} obtain
\begin{equation*}
\gathered
W_4\le 2\|X(x)\|\int\limits_0^l\|P_{12}(z)\||v(z,t)|\,dz\\
+2\|x\|\int\limits_0^l\|P_{12}(z)\||f(v(z,t))|\,dz\le 2\delta_2\|P_{12}\|_{L^2[0,l]}\|x\|^{2q-1}\|v(\cdot,t)\|_{L^2[0,l]}\\
+2L\|P_{12}\|_{L^2[0,l]}\|x\|\|v(\cdot,t)\|_{L^2[0,l]}+2\zeta\|x\|\int\limits_0^l\|P_{12}(z)\||v(z,t)|^{2q-1}dz.
\endgathered
\end{equation*}
By the Young's inequality \eqref{Yang} with $p_1=\frac{2q}{2q-1}$, $p_2=2q$, $\tau>0$ we have
\begin{equation*}
\gathered
\|x\|^{2q-1}\|v(\cdot,t)\|_{L^2[0,l]}\le\frac{2q-1}{2q}\tau^{2q/(2q-1)}\|x\|^{2q}
+\frac{1}{2q}\tau^{-2q}\|v(\cdot,t)\|_{L^2[0,l]}^{2q}
\endgathered
\end{equation*}
By the H\"older's inequality \eqref{Holder} we have
\begin{equation}\label{W3bis}
\|v(\cdot,t)\|_{L^2[0,l]}^{2q}=\Big(\int\limits_0^l|v(z,t)|^2\,dz\Big)^q\le l^{q-1}\int\limits_0^l|v(z,t)|^{2q}\,dz=l^{q-1}\|v(\cdot,t)\|_{L^{2q}[0,l]}^{2q},
\end{equation}
which implies that
\begin{equation*}
\|x\|^{2q-1}\|v(\cdot,t)\|_{L^2[0,l]}\le\frac{2q-1}{2q}\tau^{2q/(2q-1)}\|x\|^{2q}+\frac{l^{q-1}}{2q}\tau^{-2q}\|v(\cdot,t)\|_{L^{2q}[0,l]}^{2q}.
\end{equation*}
By the Young's inequality with $p_2=2q/(2q-1)$, $p_1=2q$, $\tau>0$ we have
\begin{equation*}
\|x\||v(z,t)|^{2q-1}\le\frac{1}{2q}\tau^{2q}\|x\|^{2q}+\frac{2q-1}{2q}\tau^{-2q/(2q-1)}|v(z,t)|^{2q}.
\end{equation*}
Finally $W_4$ can be estimated as follows
\begin{equation}\label{W4}
\gathered
W_4\le 2L\|P_{12}\|_{L^2[0,l]}\|x\|\|v(\cdot,t)\|_{L^2[0,l]}\\
+\Big(\frac{\zeta\|P_{12}\|_{L^1[0,l]}}{q}\tau^{2q}+\delta_2\|P_{12}\|_{L^2[0,l]}\frac{2q-1}{q}\tau^{2q/(2q-1)}\Big)\|x\|^{2q}\\
+\Big(\delta_2\|P_{12}\|_{L^2[0,l]}\frac{l^{q-1}}{q}\tau^{-2q}+\frac{\zeta(2q-1)\|P_{12}\|_{L^{\infty}[0,l]}}{q}\tau^{-2q/(2q-1)}\Big)
\|v(\cdot,t)\|_{L^{2q}[0,l]}^{2q}.
\endgathered
\end{equation}
From \eqref{15} and the assumptions of the theorem we can estimate $W_2$ as
\begin{equation}\label{W2}
W_2\le x^{\T}(t)\Omega x(t)-2\delta_1\|x(t)\|^{2q}.
\end{equation}
Now from \eqref{16bis}, \eqref{24}, \eqref{W2}, \eqref{W3} and \eqref{W4} we obtain
\begin{equation}\label{DLF}
\gathered
\dot V(v(\cdot,t),x(t))
\le -\omega\|v(\cdot,t)\|_{L^2[0,l]}^2-\lm_{\min}(\Omega)\|x(t)\|^2\\
+2L\|P_{12}\|_{L^2[0,l]}\|x(t)\|\|v(\cdot,t)\|_{L^2[0,l]}+
H_1(\varepsilon)\|v(\cdot,t)\|_{L^2[0,l]}\\+H_2(\varepsilon)\|x(t)\|+
H_3(\varepsilon,\tau)\|v(\cdot,t)\|_{L^{2q}[0,l]}^{2q}
+H_4(\varepsilon,\tau)\|x(t)\|^{2q},
\endgathered
\end{equation}
where we have denoted
\begin{equation*}
\gathered
H_1(\varepsilon)=2\sqrt{l}(\psi_0(\varepsilon,d_{\infty})+d_{\infty}\|D\|_{L^2[0,l]}\|P_{12}\|_{L^2[0,l]}),\\
H_2(\varepsilon)=2\sqrt{l}(\|P_{12}\|_{L^2[0,l]}\psi_0(\varepsilon,d_{\infty})+\|P\|\|D\|_{L^2[0,l]}d_{\infty}),\\
H_3(\varepsilon,\tau)=2\psi_1(\varepsilon)\Big(1+\frac{2q-1}{2q}\|P_{12}\|_{L^{\infty}[0,l]}\Big)\\
+\delta_2\|P_{12}\|_{L^2[0,l]}\frac{l^{q-1}}{q}\tau^{-2q}+\frac{\zeta(2q-1)\|P_{12}\|_{L^{\infty}[0,l]}}{q}\tau^{-2q/(2q-1)}-2\alpha,\\
H_4(\varepsilon,\tau)=\frac{1}{q}\psi_1(\varepsilon)\|P_{12}\|_{L^1[0,l]}\\
+\frac{\zeta\|P_{12}\|_{L^1[0,l]}}{q}\tau^{2q}+\delta_2\|P_{12}\|_{L^2[0,l]}\frac{2q-1}{q}\tau^{2q/(2q-1)}-2\delta_1.
\endgathered
\end{equation*}

By the assumption 5) of the theorem there exists $\tau\in(\tau_2,\tau_1)$ such that
\begin{equation}\label{tauINEQ}
\gathered
\frac{\delta_2\|P_{12}\|_{L^2[0,l]} l^{q-1}}{q}\tau^{-2q}+\frac{\zeta(2q-1)\|P_{12}\|_{L^{\infty}[0,l]}}{q}\tau^{-2q/(2q-1)}-2\alpha<0,\\
\frac{\zeta\|P_{12}\|_{L^1[0,l]}}{q}\tau^{2q}+\frac{\delta_2\|P_{12}\|_{L^2[0,l]}(2q-1)}{q}\tau^{2q/(2q-1)}-2\delta_1<0.
\endgathered
\end{equation}
Since $\psi_1(\varepsilon)\to 0$ for $\varepsilon\to 0+$, there is some small enough $\varepsilon>0$ such that $H_3(\varepsilon,\tau)<0$, $H_4(\varepsilon,\tau)<0$. Hence for this choice of  $(\varepsilon,\tau)$ we obtain from assumption 4) of Theorem \ref{main-theorem} that
\begin{equation*}
\gathered
\dot V(v(\cdot,t),x(t))\le -\lambda_{\min}(\Xi)(\|v(\cdot,t)\|_{L^2[0,l]}^2+\|x(t)\|^2)\\+
H_1(\varepsilon,\tau)\|v(\cdot,t)\|_{L^2[0,l]}+H_2(\varepsilon,\tau)\|x(t)\|.
\endgathered
\end{equation*}
Now applying  \eqref{ElN} we get
\begin{equation*}
\dot V(v(\cdot,t),x(t))\le -\frac{\lambda_{\min}(\Xi)}{2}(\|v(\cdot,t)\|_{L^2[0,l]}^2+\|x(t)\|^2)+
\frac{H_1^2(\varepsilon,\tau)}{2\lambda_{\min}(\Xi)}+\frac{H_2^2(\varepsilon,\tau)}{2\lambda_{\min}(\Xi)}.
\end{equation*}
We denote
\begin{equation*}
\theta_0:=\frac{\lambda_{\min}(\Xi)}{2\lambda_{\max}(\Pi_2)},\qquad \vartheta(d_{\infty}):=\frac{H_1^2(\varepsilon,\tau)}{2\lambda_{\min}(\Xi)}+\frac{H_2^2(\varepsilon,\tau)}{2\lambda_{\min}(\Xi)},
\end{equation*}
and write
\begin{equation*}
\dot V(v(\cdot,t),x(t))\le -\theta_0V(v(\cdot,t),x(t))+\vartheta(d_{\infty}).
\end{equation*}
By means of the Gr\"onwall's inequality and using\eqref{0.2} we obtain the estimates
\begin{equation*}
\gathered
\|x(t)\|\le\sqrt{\frac{\lambda_{\max}(\Pi_2)}{\lambda_{\min}(\Pi_1)}}\varrho(x_0,\varphi)e^{-\theta_0t/2}+
\sqrt{\frac{\vartheta(d_{\infty})}{\lambda_{\min}(\Pi_1)\theta_0}},\\
\|v(\cdot,t)\|_{L^2[0,l]}\le\sqrt{\frac{\lambda_{\max}(\Pi_2)}{\lambda_{\min}(\Pi_1)}}\varrho(x_0,\varphi)e^{-\theta_0t/2}+
\sqrt{\frac{\vartheta(d_{\infty})}{\lambda_{\min}(\Pi_1)\theta_0}},
\endgathered
\end{equation*}
where $\varrho(x_0,\varphi):=(\|x_0\|^2+\|\varphi\|_{L^2[0,l]}^2)^{1/2}$.\\
Recall that $u(z,t)=v(z,t)+w(z,t)$, hence finally the following ISS estimate holds
\begin{equation*}
\|u(\cdot,t)\|_{L^2[0,l]}\le\sqrt{\frac{\lambda_{\max}(\Pi_2)}{\lambda_{\min}(\Pi_1)}}\varrho(x_0,\varphi)e^{-\theta_0t/2}+
\sqrt{l}d_{\infty}+\sqrt{\frac{\vartheta(d_{\infty})}{\lambda_{\min}(\Pi_1)\theta_0}}.
\end{equation*}
\end{proof}

\begin{corollary}\label{cor1}
Let $f$ be globally Lipschitz so that for some $L>0$ we have
$$|f(s_1)-f(s_2)|\le L|s_1-s_2|,\quad s_1, s_2\in\mathbb{R}$$
and such that for all $s\in\mathbb{R}$, $s\ne 0$ it hold that
$sf(s)\le\sigma s^2$, $f(0)=0$ for some $\sigma\in\mathbb{R}$. Further, we assume that the conditions 2) -- 4) of Theorem \ref{main-theorem} are satisfied.
Then system \eqref{1} -- \eqref{3} is ISS from $(d_1,d_2)$ to $(u,x)$ so that
\begin{equation}\label{C25}
\gathered
\|x(t)\|\le \sqrt{\frac{\lambda_{\max}(\Pi_2)}{\lambda_{\min}(\Pi_1)}}\varrho(x_0,\vph)e^{-\frac{\theta t}{2}}+
\sqrt{\frac{\beta}{\theta \lambda_{\min}(\Pi_1)}}d_{\infty},\quad t\ge 0,\\
\|u(\cdot,t)\|_{L^2[0,l]}\le \sqrt{\frac{\lambda_{\max}(\Pi_2)}{\lambda_{\min}(\Pi_1)}}\varrho(x_0,\varphi)e^{-\frac{\theta t}{2}}+
\Big(\sqrt l+\sqrt{\frac{\beta}{\theta \lambda_{\min}(\Pi_1)}}\Big)d_{\infty},\quad t\ge 0,
\endgathered
\end{equation}
where
$$\theta:=\frac{\lambda_{\min}(\Xi)}{2\lambda_{\max}(\Pi_2)},\quad
\beta:=\frac{2l(K_1^2+K_2^2)}{\lambda_{\min}(\Xi)},\quad \varrho(x_0,\varphi):=(\|x_0\|^2+\|\varphi\|_{L^2[0,l]}^2)^{1/2}$$
$$K_1:=L+\|D\|_{L^2[0,l]}\|P_{12}\|_{L^2[0,l]},\quad K_2:=L\|P_{12}\|_{L^2[0,l]}+\|D\|_{L^2[0,l]}\|P\|.$$
\end{corollary}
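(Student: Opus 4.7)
The plan is to specialize the Lyapunov argument behind Theorem~\ref{th2} to the globally Lipschitz setting and to track the explicit constants. I begin by writing $f=f_0+f_1$ with $f_0:=f$ and $f_1\equiv 0$ in the decomposition from assumption~1) of Section~3, so that $c_0$ and $\zeta$ in assumption~3) can be taken to vanish. As a consequence the error bounds of the decomposition lemma of Section~4 collapse to $\psi_1(\varepsilon)\equiv 0$ and $\psi_0(\varepsilon,d_\infty)\equiv Ld_\infty$, independent of $\varepsilon$. The superlinear dissipation $-\alpha|s|^{2q}$ from assumption~2) of Section~3 is not available ($\alpha=0$), but it is also not needed: all Young-type terms involving the auxiliary parameter $\tau$ in the proof of Theorem~\ref{th2} disappear as soon as $\zeta=0$ and the $X$-contribution to $W_4$ is trivial.

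Next I would rerun the bound $\dot V=W_1+W_2+W_3+W_4$ of the proof of Theorem~\ref{th2} with these simplifications. The term $W_1$ gives $-\omega\|v(\cdot,t)\|_{L^2[0,l]}^2$ via Friedrichs, Cauchy--Bunyakovsky, and $sf(s)\le\sigma s^2$; the term $W_2$ gives $-\lambda_{\min}(\Omega)\|x(t)\|^2$ from the positive definiteness of $\Omega$; and $W_4$ reduces, by Cauchy--Bunyakovsky together with the Lipschitz property of $f$, to $W_4\le 2L\|P_{12}\|_{L^2[0,l]}\|x(t)\|\,\|v(\cdot,t)\|_{L^2[0,l]}$. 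Combined, these three contributions produce the quadratic form $-\lambda_{\min}(\Xi)(\|v(\cdot,t)\|_{L^2[0,l]}^2+\|x(t)\|^2)$ associated to the matrix $\Xi$ of assumption~4) of Theorem~\ref{main-theorem}. In $W_3$, substituting $\psi_1\equiv 0$ and $\psi_0\equiv Ld_\infty$ into \eqref{W3} collapses it to
\begin{equation*}
W_3\le 2\sqrt{l}\,K_1\,d_\infty\,\|v(\cdot,t)\|_{L^2[0,l]}+2\sqrt{l}\,K_2\,d_\infty\,\|x(t)\|,
\end{equation*}
with $K_1,K_2$ exactly as defined in the corollary.

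Summing the bounds and applying the elementary inequality \eqref{ElN} twice to absorb the two linear-in-$d_\infty$ terms yields
\begin{equation*}
\dot V\le -\tfrac{\lambda_{\min}(\Xi)}{2}(\|v(\cdot,t)\|_{L^2[0,l]}^2+\|x(t)\|^2)+\tfrac{2l(K_1^2+K_2^2)}{\lambda_{\min}(\Xi)}d_\infty^2.
\end{equation*}
Using the upper bound in \eqref{12} this becomes the scalar differential inequality $\dot V\le -\theta V+\beta d_\infty^2$. Gr\"onwall's lemma then gives $V(t)\le V(0)e^{-\theta t}+\tfrac{\beta}{\theta}d_\infty^2$, and combining with $V(0)\le\lambda_{\max}(\Pi_2)\varrho(x_0,\varphi)^2$, the lower bound $V(t)\ge\lambda_{\min}(\Pi_1)(\|v(\cdot,t)\|_{L^2[0,l]}^2+\|x(t)\|^2)$, and the subadditivity $\sqrt{a+b}\le\sqrt{a}+\sqrt{b}$ produces the stated bounds on $\|x(t)\|$ and on $\|v(\cdot,t)\|_{L^2[0,l]}$. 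The bound on $\|u(\cdot,t)\|_{L^2[0,l]}$ then follows from $u=v+w$, the pointwise estimate $|w(z,t)|\le d_\infty$ from the Section~4 lemma, and the triangle inequality, which add an extra $\sqrt{l}\,d_\infty$ to the gain term.

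The main delicate point I expect is the bookkeeping needed to check that the $\tau$-parametrised terms $H_3,H_4$ in the proof of Theorem~\ref{th2} vanish identically in the present Lipschitz setting, so that condition~5) of Theorem~\ref{main-theorem} is redundant here and the constant multiplying $d_\infty^2$ comes out precisely as $\beta=2l(K_1^2+K_2^2)/\lambda_{\min}(\Xi)$; after that verification, the corollary is a direct specialization of the proof of Theorem~\ref{th2}.
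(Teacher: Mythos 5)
Your proposal is correct and follows essentially the same route as the paper: specializing $\psi_0(\varepsilon,d_\infty)=Ld_\infty$, $\psi_1\equiv 0$ so that $H_3\equiv H_4\equiv 0$ and $H_1=2\sqrt l\,K_1 d_\infty$, $H_2=2\sqrt l\,K_2 d_\infty$ in \eqref{DLF}, then applying \eqref{ElN}, the bounds \eqref{12}, Gr\"onwall, and $u=v+w$ with $|w|\le d_\infty$ to obtain \eqref{C25} with exactly the stated constants. The only remark is that your phrase about the $X$-contribution to $W_4$ being trivial is an implicit assumption rather than a derived fact, but the paper's own proof makes the same tacit simplification when asserting $H_3\equiv H_4\equiv 0$.
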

\begin{proof}
In this case we have $\psi_0(\varepsilon,d_{\infty})=Ld_{\infty}$, $\psi_1(\varepsilon)\equiv 0$, $H_3\equiv 0$, $H_4\equiv 0$,
\begin{equation*}
\gathered
H_1(\varepsilon)=2\sqrt{l}d_{\infty}(L+\|D\|_{L^2[0,l]}\|P_{12}\|_{L^2[0,l]})=2\sqrt{l}d_{\infty}K_1,\\
H_2(\varepsilon)=2\sqrt{l}(\|P_{12}\|_{L^2[0,l]}Ld_{\infty}
+\|P\|\|D\|_{L^2[0,l]}d_{\infty})=2\sqrt{l}d_{\infty}K_2,
\endgathered
\end{equation*}
so that from \eqref{DLF} we have the estimate
\begin{equation}\label{26C}
\gathered
\dot V(v(\cdot,t),x(t))\le -\lambda_{\min}(\Xi)(\|v(\cdot,t)\|_{L^2[0,l]}^2+\|x(t)\|^2)\\
+2\sqrt{l}d_{\infty}(K_1\|v(\cdot,t)\|_{L^2[0,l]}+K_2\|x(t)\|).
\endgathered
\end{equation}
and by \eqref{ElN} we can write
\begin{equation*}
\dot V(v(\cdot,t),x(t))\le -\frac{\lambda_{\min}(\Xi)}{2}(\|v(\cdot,t)\|_{L^2[0,l]}^2+\|x(t)\|^2)+\beta d_{\infty}^2,
\end{equation*}
by means of \eqref{12} for $V(v(\cdot,t),x(t))$ we obtain the differential inequality
\begin{equation*}
\dot V(v(\cdot,t),x(t))\le -\theta V(v(\cdot,t),x(t))+\beta d_{\infty}^2,
\end{equation*}
which implies after the integration that
\begin{equation*}
V(v(\cdot,t),x(t))\le e^{-\theta t}V(\vph,x_0)+\frac{2\beta\lambda_{\max}(\Pi_2)}{\lambda_{\min}(\Xi)} d_{\infty}^2.
\end{equation*}
Using \eqref{12}, the elementary inequality $\sqrt{a+b}\le\sqrt{a}+\sqrt{b}$, $a$, $b\ge 0$ and  $u(t,z)=v(t,z)+w(t,z)$ we arrive at \eqref{C25}.
\end{proof}
\begin{corollary}\label{unique} Under the conditions of Theorem \ref{main-theorem} a solution $(u,x)$ to the problem \eqref{1} -- \eqref{3} is unique in the class $L^2(\mathbb{R}_+,L^{\infty}[0,l])\times C(\mathbb{R}_+,\mathbb{R}^n)$.
\end{corollary}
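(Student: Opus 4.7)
The plan is to establish uniqueness by a standard difference-and-Grönwall argument, with the monotonicity assumption $f_1^{\prime}<0$ from Theorem \ref{main-theorem} serving as the crucial ingredient that neutralizes the non-globally-Lipschitz part of the nonlinearity. Suppose $(u_1,x_1)$ and $(u_2,x_2)$ are two solutions of \eqref{1}--\eqref{3} in the class $L^2(\mathbb{R}_+,L^{\infty}[0,l])\times C(\mathbb{R}_+,\mathbb{R}^n)$ sharing the same initial and boundary data. Setting $U:=u_1-u_2$ and $Y:=x_1-x_2$ and subtracting the two copies of \eqref{1bis}, one finds that $U\in L^2([0,T],H_0^1[0,l])\cap C([0,T],L^2[0,l])$ (homogeneous Dirichlet data, since $d_1,d_2$ cancel) and $Y\in C([0,T],\mathbb{R}^n)$ solve, in the weak sense,
\begin{equation*}
\gathered
\partial_t U=a^2 U_{zz}+\bigl(f(u_1)-f(u_2)\bigr)+B^{\T}(z)Y,\\
\dot Y=CY+\bigl(X(x_1)-X(x_2)\bigr)+\int_0^l D(z)U(z,t)\,dz,
\endgathered
\end{equation*}
with $U(\cdot,0)=0$ and $Y(0)=0$.

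Next I would work with the elementary energy functional $E(t):=\|U(\cdot,t)\|_{L^2[0,l]}^2+\|Y(t)\|^2$ and compute $\dot E$. The integration-by-parts formula \eqref{0.2}, applied with the regularity $\partial_t U\in L^2([0,T],H^{-1}[0,l])+L^{2q/(2q-1)}([0,T],L^{2q/(2q-1)}[0,l])$ read off from the equation, legitimizes $\tfrac{d}{dt}\|U\|_{L^2[0,l]}^2=2\langle\partial_t U,U\rangle$. Testing the PDE by $U$, the ODE by $Y$, and invoking Friedrich's inequality for the principal part $-a^2\|U_z\|_{L^2}^2\le-\tfrac{\pi^2 a^2}{l^2}\|U\|_{L^2}^2$ yields
\begin{equation*}
\gathered
\tfrac{1}{2}\dot E(t)\le -\tfrac{\pi^2 a^2}{l^2}\|U\|_{L^2[0,l]}^2+\int_0^l U\bigl(f(u_1)-f(u_2)\bigr)\,dz+\int_0^l U\,B^{\T}(z)Y\,dz\\
+Y^{\T}CY+Y^{\T}\bigl(X(x_1)-X(x_2)\bigr)+Y^{\T}\!\int_0^l D(z)U(z,t)\,dz.
\endgathered
\end{equation*}

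The decisive step deals with the nonlinearity. Decomposing $f=f_0+f_1$, the Lipschitz bound \eqref{3*} gives $\int_0^l U(f_0(u_1)-f_0(u_2))\,dz\le L\|U\|_{L^2[0,l]}^2$, while the strict decrease $f_1^{\prime}<0$ furnishes the pointwise estimate $(u_1-u_2)(f_1(u_1)-f_1(u_2))\le 0$, so that $\int_0^l U(f_1(u_1)-f_1(u_2))\,dz\le 0$. This is exactly where the hypothesis $f_1^{\prime}<0$ is used, and the choice of the uniqueness class is precisely such that $f_1(u_i(\cdot,t))$ is bounded in $z$ for a.e.\ $t$, guaranteeing that the integrals above are well defined and that the regularity of $\partial_t U$ is sufficient to justify the energy identity. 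For the ODE nonlinearity, since $X\in C^1$ and $x_i\in C([0,T],\mathbb{R}^n)$ have compact ranges on $[0,T]$, there exists a local Lipschitz constant $L_X(T)>0$ with $\|X(x_1)-X(x_2)\|\le L_X(T)\|Y\|$. The two cross-coupling terms are then estimated by the Cauchy-Bunyakovsky inequality and absorbed by Young's inequality, producing a differential inequality $\dot E(t)\le K(T)E(t)$ with $K(T)$ depending only on $L,\,L_X(T),\,\|C\|,\,\|B\|_{L^2[0,l]},\,\|D\|_{L^2[0,l]},\,a,\,l$. Since $E(0)=0$, Grönwall's lemma forces $E\equiv 0$ on $[0,T]$, and as $T>0$ was arbitrary we obtain $u_1\equiv u_2$ and $x_1\equiv x_2$ on $\mathbb{R}_+$.

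The main obstacle I expect is the rigorous justification of the energy identity, since $U$ is not smooth enough to be inserted directly as a test function in \eqref{1bis}. I would handle this either by a density argument, approximating $U$ in an appropriate topology by functions in $C_0^{\infty}([0,T],C_0^{\infty}[0,l])$, or by invoking the chain-rule statement for $\|U(\cdot,t)\|_{L^2[0,l]}^2$ in the spirit of Lemma 2.2 of \cite{DKS20} already used in the proof of Theorem \ref{th2}. All remaining estimates are strictly simpler versions of the ones already carried out there.
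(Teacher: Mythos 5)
Your proposal is correct and follows essentially the same route as the paper's Appendix B: form the difference of the two solutions, exploit $f_1'<0$ so the non-Lipschitz part of $f$ contributes non-positively to the energy balance, use boundedness of the solutions (guaranteed by the uniqueness class and Theorem \ref{th2}) to get a local Lipschitz bound for $X$, and close with a Gr\"onwall argument from zero initial data. The only cosmetic difference is that the paper phrases the final step via a two-component vector Lyapunov function and a matrix-exponential comparison, whereas you sum the two components into a single energy $E(t)$ and apply scalar Gr\"onwall; your explicit $f_0/f_1$ splitting of the nonlinearity is in fact slightly cleaner than the paper's mean-value-theorem formulation.
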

See Appendix B for the proof.
\section{Example}
To illustrate our results we consider an interconnection of a parabolic equation and an ordinary differential equation as follows
\begin{equation}\label{27}
\gathered
u_t(z,t)=a^2u_{zz}(z,t)+\sigma \sin u(z,t)+bx(t),\\
\dot x(t)=cx(t)+d\int\limits_0^lu(z,t)\,dz
\endgathered
\end{equation}
with initial conditions
\begin{equation}\label{27*}
\gathered
x(0)=x_0\in\mathbb{R},\quad u(z,0)=\vph(z),\quad \vph\in L^2([0,l]),\quad z\in(0,l),\quad t\in(0,+\infty)
\endgathered
\end{equation}
and boundary conditions
\begin{equation}\label{27**}
\gathered
u(0,t)=d_1(t),\quad u(l,t)=d_2(t).
\endgathered
\end{equation}
where $a$, $c$, $l$, $\sigma$ are positive numbers and  $b$, $d\in\mathbb{R}$.

The solution to the (scalar) problem \eqref{11bis} with (scalar) $P>0$ can be calculated as follows.
The problem \eqref{11bis} reads as
\begin{equation*}
	\gathered
	a^2P_{12}^{\prime\prime}(z)+cP_{12}(z)=-b-Pd, \quad P_{12}(0)=P_{12}(l)=0.
	\endgathered
\end{equation*}
Since $c>0$, the general solution to the differential equation is
\begin{equation*}
	\gathered
	P_{12}(z)=C_1\sin(\lambda z)+C_2\cos(\lambda z)-\frac{b+Pd}{c},\quad \lambda=\frac{\sqrt{c}}{a}.
	\endgathered
\end{equation*}
From the boundary conditions $P_{12}(0)=P_{12}(l)=0$ we get 
\begin{equation*}
	\gathered
	C_2-\frac{b+Pd}{c}=0,\\
	C_1\sin(\lambda l)+C_2\cos(\lambda l)-\frac{b+Pd}{c}=0.
	\endgathered
\end{equation*}
From this we derive
\begin{equation*}
	\gathered
	C_1=\frac{b+Pd}{c}\Big(\frac{1}{\sin(\lambda l)}-\cot(\lambda l)\Big)=\tan\big(\frac{\lambda l}{2}\big)\frac{b+Pd}{c}.
	\endgathered
\end{equation*}
So that 
\begin{equation*}\label{28}
	P_{12}(z)=\frac{b+Pd}{c}\Big(\cos(\lambda z)-1+\tan(\frac{\lambda l}{2})\sin(\lambda z)\Big).
\end{equation*}
Let us calculate $\|P_{12}\|_{L^2([0,l])}=\Big|\frac{b+Pd}{c}\Big|\chi$, where
\begin{equation*}
	\chi=\Big(\int\limits_0^l(\cos(\lambda z)-1+\tan\big(\frac{\lambda l}{2}\big)\sin(\lambda z))^2\,dz\Big)^{1/2}.
\end{equation*}
Under the integral we have
\begin{equation*}
	\gathered
	\cos(\lambda z)-1+\tan(\frac{\lambda l}{2})\sin(\lambda z)=
	-2\sin^2\big(\frac{\lambda z}{2}\big)+2\tan\big(\frac{\lambda l}{2}\big)\sin\big(\frac{\lambda z}{2}\big)
	\cos\big(\frac{\lambda z}{2}\big)\\
	=\frac{2\sin\big(\frac{\lambda z}{2}\big)}{\cos\big(\frac{\lambda l}{2}\big)}\Big(\sin\big(\frac{\lambda l}{2}\big)\cos\big(\frac{\lambda z}{2}\big)-
	\sin\big(\frac{\lambda z}{2}\big)\cos\big(\frac{\lambda l}{2}\big)\Big)\\
	=\frac{2\sin\big(\frac{\lambda z}{2}\big)\sin\big(\frac{\lambda(l-z)}{2}\big)}{\cos\big(\frac{\lambda l}{2}\big)}
	=\frac{\cos(\lambda(z-\frac{l}{2}))-\cos\big(\frac{\lambda l}{2}\big)}{\cos\big(\frac{\lambda l}{2}\big)}\\
	=\frac{2\sin(\frac{\lambda l}{2})(\cos(\lambda(z-\frac{l}{2}))-\cos\frac{\lambda l}{2})}{\sin(\lambda l)}
	=\frac{\sin(\lambda z)+\sin(\lambda(l-z))-\sin(\lambda l)}{\sin(\lambda l)}
	\endgathered
\end{equation*}
Hence for the integral we can write
\begin{equation*}
	\chi=\frac{1}{|\sin(\lambda l)|}\Big(\int\limits_0^l(\sin(\lambda z)+\sin(\lambda(l-z))-\sin(\lambda l))^2\,dz\Big)^{1/2}.
\end{equation*}
Obviously, $\|B\|_{L^2([0,1])}=\sqrt{l}|b|$ and$\|D\|_{L^2([0,1])}=\sqrt{l}|d|$, so that we can calculate the constants
\begin{equation*}
	\gathered
	\omega=2\Big(\frac{\pi^2a^2}{l^2}-\|D\|_{L^2([0,1])}\|P_{12}\|_{L^2([0,1])}-\sigma\Big)\\
	=2\Big(\frac{\pi^2a^2}{l^2}-\Big|\frac{(b+Pd)\sqrt{l}d}{c}\Big|\chi-\sigma\Big)
	\endgathered
\end{equation*}
\begin{equation*}
	\gathered
	\Omega=-\Big(C^{\T}P+PC+\int\limits_0^{\ell}(P_{12}(z)B^{\T}(z)+B(z)P_{12}(z))\,dz\Big)\\
	=-2\Big(cP+\frac{b(b+Pd)}{c}\int\limits_0^{l}(\cos(\lambda z)-1+\tan(\frac{\lambda l}{2})\sin(\lambda z))\,dz\Big).
	\endgathered
\end{equation*}
For the last integral we calculate
\begin{equation*}
	\gathered
	\int\limits_0^{l}(\cos(\lambda z)-1+\tan(\frac{\lambda l}{2})\sin(\lambda z))\,dz=
	\frac{\sin(\lambda z)}{\lambda}-z-\tan(\frac{\lambda l}{2})\frac{\cos(\lambda z)}{\lambda}\Big|_{z=0}^{l}\\
	=\frac{1}{\lambda}\Big(\sin(\lambda l)+\tan(\frac{\lambda l}{2})(1-\cos(\lambda l))\Big)-l
	=\frac{2\tan(\frac{\lambda l}{2})}{\lambda}-l:=\varkappa,
	\endgathered
\end{equation*}
which implies
\begin{equation*}
	\Omega
	=-2\Big(cP+\frac{b(b+Pd)\varkappa}{c}\Big).
\end{equation*}
By means of the Taylor expansion 
\begin{equation*}
	\tan x=x+\frac{x^3}{3}+\cdots
\end{equation*}
we can write
\begin{equation*}
	\varkappa\approx\frac{2}{\lambda}\Big(\frac{\lambda l}{2}+\frac{(\lambda l)^3}{3}\Big)-l=\frac{\lambda^2 l^3}{12}
\end{equation*}
and similarly
\begin{equation*}
	\gathered
	\chi\approx\frac{1}{|\sin(\lambda l)|}\Big(\int\limits_0^l(\lambda l-\frac{(\lambda l)^3}{6}+
	\lambda(z-l)-\frac{(\lambda(z-l))^3}{6}-\lambda z+\frac{(\lambda z)^3}{6} )^2\Big)^{1/2}\\
	\approx
	\frac{1}{\lambda l}\Big(\frac{\lambda^6l^2}{4}\int\limits_0^lz^2(l-z)^2\,dz\Big)^{1/2}=
	\frac{l^2\sqrt{l}\lambda^2}{2\sqrt{30}}.
	\endgathered
\end{equation*}
By Corollary \ref{cor1} the system \eqref{27} is ISS if the conditions 2)-5) of Theorem \ref{main-theorem} are satisfies, that is if there exists $P$ such that
\begin{equation}\label{29}
P>0,\quad\omega(P)>0,\quad \Omega(P)>0,\quad \chi\Big|\frac{b+dP}{c}\Big|<\sqrt{P}.
\end{equation}
Let $P=p^*$ satisfy the inequalities \eqref{29}, then
\begin{equation*}
	\gathered
	K_1=\sigma+\sqrt{l}\frac{|d(b+p^*d)|}{c}\chi,\quad K_2=\sigma\frac{|d(b+p^*d)|}{c}\chi+\sqrt{l}|d|,\\
 \endgathered
\end{equation*}
In case $\lambda l\ll 1$ these conditions can be simplified essentially, as in this case
\begin{equation*}
	\varkappa\approx\frac{\lambda^2l^3}{12},\quad \chi\approx\frac{\lambda^2l^2\sqrt{l}}{2\sqrt{30}}
\end{equation*}
and
\begin{equation*}
	\gathered
	\|P_{12}\|_{L^2[0,l]}\approx\Big|\frac{b+Pd}{c}\Big|\frac{\lambda^2l^2\sqrt{l}}{2\sqrt{30}},\quad \|B\|_{L^2[0,l]}=\sqrt{l}|b|,\quad \|D\|_{L^2[0,l]}=\sqrt{l}|d|,\\
	\omega=\omega(P)\approx 2\Big(\frac{\pi^2a^2}{l^2}-\frac{l^3|d(b+dP)|}{2\sqrt{30}a^2}-\sigma\Big),\\
	\Omega=\Omega(P)\approx-2(Pa^2\lambda^2+\frac{l^3b(b+Pd)}{12a^2}).
	\endgathered
\end{equation*}
Let the parameters of our system be given by
$c=0.25$, $a=1$, $l=1$, $b=1$, $d=-5$, $L=\sigma=1$ then we can take $P=p^*=1$, so that
\begin{equation*}
	\gathered
	\Xi=
	\begin{pmatrix}
		13.992949&-0.374626\\
		-0.374626&0.183765
	\end{pmatrix},\\
	\varkappa=0.021367,\quad \chi=0.023414,\quad \omega=13.992949,\quad\Omega=0.183766,\\
	\|P_{12}\|_{L^2[0,1]}=0.374626,\quad\lambda_{\min}(\Pi_1)=\lambda_{\min}(\Pi_2)=0.625374,\\
	\lambda_{\max}(\Pi_1)=\lambda_{\max}(\Pi_2)=1.374626,\quad K_1=2.873130,\quad K_2=8.7462728,\\
	\lambda_{\min}(\Xi)=0.1736102,\quad\theta=0.06315,\quad\beta=785.0749.
	\endgathered
\end{equation*}
By Corollary \ref{cor1} we obtain the following ISS estimates for the solutions of \eqref{27}
\begin{equation*}
	\gathered
	\|x(t)\|\le 1.482594 e^{-0.0315 t}+95.26d_{\infty},\quad t\ge 0,\\
	\|u(\cdot,t)\|_{L^2[0,1]}\le 1.482594 e^{-0.0315 t}+96.26d_{\infty}\quad t\ge 0.
	\endgathered
\end{equation*}
\begin{remark}
Let us not that the ODE subsystem is not ISS, because it is not globally asymptotically stable already in the disconnected case ($u=0$), hence there is no way to establish the ISS property for the interconnections by means of the small-gain theory or with help of vector Lyapunov functions.
\end{remark}

\section{Conclusion and future work}
\label{sec:conclusions}
In this work we have developed an approach for a construction of a Lyapunov function. With help of this function we have proved the ISS property of a nonlinear coupled systems of an ODE and a PDE with disturbances at the boundary. ISS-type estimation for solutions is also derived. Recall that in contrary to the most related works we allow the situation, where the decoupled ODE can be unstable. Also our problem is not self-adjoint, which is different from many of exiting works dealing with the ISS-like properties.

An interesting direction for future research is to extend the developed approach to the multidimensional case and to the case of coupled   PDE systems with time varying coefficients, where the decoupled PDEs are not necessarily stable. Furthermore, it is of interest to consider other types of boundary conditions, e.g., of Neumann or Robin type.

\appendix
\section{Proof of Theorem \ref{main-theorem}}
Here we will prove Theorem \ref{main-theorem} on the existence of solutions to the problem \eqref{1} -- \eqref{3}.
We will use the Lyapunov function $V$ from Section 3 and the ideas form \cite{babin-v,lions}. Hence we begin with the following change of variables
\begin{equation}\label{A2}
\wt u(z,t)=u(z,t)-H(z,t),\quad H(z,t)=\frac{z}{l}d_2(t)+\frac{l-z}{l}d_1(t)
\end{equation}
for which the problem  \eqref{1} -- \eqref{3} transforms to the equivalent problem
\begin{equation}\label{A3}
\gathered
\wt u_t(z,t)=a^2\wt u_{zz}(z,t)+f(\wt u(z,t)+H(z,t))+B^{\T}(z)x(t)-H_t(z,t)\,\\
\dot x(t)=Cx(t)+X(x(t))+\int\limits_0^lD(z)\wt u(z,t)\,dz+\int\limits_0^lD(z)H(z,t)\,dz
\endgathered
\end{equation}
with initial states $x(0)=x_0\in\mathbb{R}^n$, $\wt u(z,0)=\vph(z)$, $\vph\in L^2[0,l]$, $z\in(0,l)$, $t\in(0,\infty)$
and boundary conditions
$\wt u(0,t)=0$, $\wt u(l,t)=0$.

In the sequel we drop the symbol $\,\,\wt{}\,\,$ over $u$ to simplify notation. Consider the orthonormal basis $e_j(z)=\sqrt{\frac{2}{l}}\sin\frac{\pi jz}{l}$, $j\in\mathbb{N}$ in $L^2[0,l]$.

We define the following projection operator $\Pi_Nf:=\sum\limits_{p=0}^Nf_p e_p(z)$, $f_p=(f,e_p)_{L^2[0,l]}$  that maps the Hilbert space $L^2[0,l]$ onto the finite dimensional subspace
$E_N=\text{span}\{e_1(z),\dots,e_N(z)\}$.

The Galerkin system corresponding to the problem \eqref{A3} is as follows
\begin{equation}\label{A4}
\gathered
\partial_t u_N(z,t)=a^2\partial_{zz}u_{N}(z,t)+\Pi_Nf(u_N(z,t)+H(z,t))\\
+\Pi_NB^{\T}(z)x_N(t)-\Pi_NH_t(z,t)\,\\
\dot x_N(t)=Cx_N(t)+X(x_N(t))+\int\limits_0^lD(z)u_N(z,t)\,dz+\int\limits_0^lD(z)H(z,t)\,dz
\endgathered
\end{equation}
with initial conditions $u_N(z,0)=\Pi_N\vph(z)$, $x_N(0)=x_0$. The boundary conditions
$u_N(0,t)=u_N(l,t)=0$ are satisfied since $u_N(z,t)$ is an element of $E_N$.
Due to the assumptions 1)--4) introduced in Section \ref{problem} it follows that the ODE-system \eqref{A4} satisfies the local Lipschitz condition and hence possesses a solution defined for $t\in[0,t_{N}],\; t_{N}>0$.

To derive a priori estimates for the solutions $(u_N(z,t),x_N(t))$ of the ODE system \eqref{A4} we use the function $V$ from Section 4
\begin{equation}\label{A5}
V(u_N(\cdot,t),x_N)=\int\limits_0^l u_N^2(z,t)\,dz+2x^{\T}_N\int\limits_0^lP_{12}(z)u_N(z,t)\,dz+x_N^{\T}Px_N,
\end{equation}
for which along solutions of \eqref{A4} we calculate
\begin{equation}\label{A6}
\dot V(u_N(\cdot,t),x_N(t))=R_1(z,t)+R_2(z,t)+R_3(z,t)+R_4(z,t),
\end{equation}
where
\begin{equation}\label{A12}
\gathered
R_1(z,t)=-2a^2\int\limits_0^l(\partial_zu_N(z,t))^2\,dz+2\int\limits_0^lu_N(z,t)\Pi_Nf(u_N(z,t))\,dz\\
+2\int\limits_0^lu_N(z,t)D^{\T}(z)\,dz\int\limits_0^lP_{12}(z)u_N(z,t)\,dz
\endgathered
\end{equation}
\begin{equation}\label{A13}
\gathered
R_2(z,t)=2\int\limits_0^lu_N(z,t)(\Pi_N-I)B^{\T}(z)\,dzx_N(t)\\+2X^{\T}(x_N(t))\int\limits_0^lP_{12}(z)u_N(z,t)\,dz
+2x_N^{\T}(t)\int\limits_0^lP_{12}(z)\Pi_Nf(u_N(z,t))\,dz
\endgathered
\end{equation}
\begin{equation}\label{A14}
\gathered
R_3(z,t)=x_N^{\T}(t)(C^{\T}P+PC+\int\limits_0^l(P_{12}(z)B^{\T}(z)+B(z)P_{12}^{\T}(z))\,dz)x_N(t)\\
+2x_N^{\T}(t)PX(x_N(t))\\
+x_N^{\T}(t)\int\limits_0^l(P_{12}(z)(\Pi_N-I)B^{\T}(z)+(\Pi_N-I)B(z)P_{12}^{\T}(z))\,dzx_N(t).
\endgathered
\end{equation}
\begin{equation}\label{A15}
\gathered
R_4(z,t)=-2\int\limits_0^lu_N(z,t)\Pi_NH_t(z,t)\,dz+2\int\limits_0^lH(z,t)D^{\T}(z)\,dz\int\limits_0^lP_{12}(z)u_N(z,t)\,dz\\
-2x_N^{\T}(t)\int\limits_0^lP_{12}(z)\Pi_NH_t(z,t)\,dz+2x_N^{\T}(t)\int\limits_0^lPD(z)H(z,t)\,dz\\
+2\int\limits_0^lu_N(z,t)\Pi_N(f(u_N(z,t)+H(z,t))-f(u_N(z,t)))\,dz\\
+2x_N^{\T}(t)\int\limits_0^lP_{12}(z)\Pi_N(f(u_N(z,t)+H(z,t))-f(u_N(z,t)))\,dz.
\endgathered
\end{equation}
\begin{lemma}\label{lemmaA1}
There exists $N^*$ such that for any  $N\ge N^*$ and $t\in\mathbb{R}_+$ the solution $(u_N,x_N)$ to \eqref{A4} satisfies
\begin{equation}\label{A30bis}
\gathered
\|x_N(t)\|^2+\|u_N(\cdot,t)\|^2_{L^2[0,l]}\le e^{-\gamma_0t}\frac{\lambda_{\max}(\Pi_2)}{\lambda_{\min}(\Pi_1)}(\|x_0\|^2+\|\vph\|^2_{L^2[0,l]})
+\frac{\gamma_2}{\lambda_{\min}(\Pi_1)\gamma_0},
\endgathered
\end{equation}
where $\gamma_0$ and $\gamma_2$ are some positive constants. In particular the solution $(u_N,x_N)$ exists for all $t\ge 0$. Moreover for all $N\ge N^*$ and $T>0$ the next  a priori estimate is true
\begin{equation}\label{A30*}
\int\limits_0^T(\|u_N(\cdot,s)\|_{L^{2q}[0,l]}^{2q}+\|x_N(s)\|^{2q})\,ds\le\frac{\gamma_2T}{\gamma_1}+\frac{\lambda_{\max}(\Pi_2)}{\gamma_1}(\|x_0\|^2
+\|\vph\|_{L^2[0,l]}^2)
\end{equation}
for some constant $\gamma_1>0$.
\end{lemma}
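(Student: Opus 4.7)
The plan is to apply the Lyapunov function $V$ defined in \eqref{A5} to the Galerkin system \eqref{A4} and recover essentially the same differential inequality as obtained in the proof of Theorem \ref{th2}, up to projection corrections that become negligible for $N$ large. First I compute $\dot V$ along solutions of \eqref{A4}, which is precisely the decomposition \eqref{A6}; the terms $R_1$, $R_3$ mirror $W_1$, $W_2$ from the proof of Theorem \ref{th2}, while $R_2$ mirrors $W_4$, and $R_4$ collects all the new contributions due to the auxiliary function $H(z,t)$ arising from the change of variables \eqref{A2}.

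Next I estimate $R_1$ and $R_3$ exactly as $W_1$ and $W_2$ were estimated in the main proof, via Friedrichs' inequality, the Cauchy--Bunyakovsky inequality and assumptions 2), 3) of Theorem \ref{main-theorem}. This produces the bounds $R_1\le -\omega\|u_N(\cdot,t)\|_{L^2[0,l]}^2-2\alpha\|u_N(\cdot,t)\|_{L^{2q}[0,l]}^{2q}$ and $R_3\le x_N^{\T}\Omega x_N-2\delta_1\|x_N\|^{2q}$, plus a projection correction involving $(\Pi_N-I)B$ in $R_3$ which is controlled by $2\|(I-\Pi_N)B\|_{L^2[0,l]}\|P_{12}\|_{L^2[0,l]}\|x_N\|^2$; since $B\in L^2([0,l],\mathbb{R}^n)$, this correction tends to zero as $N\to\infty$. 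The first term of $R_2$ vanishes identically because $u_N\in E_N$ and $\Pi_N$ is the orthogonal $L^2$-projection, so $\int_0^l u_N(z,t)(\Pi_N-I)B^{\T}(z)\,dz=0$. The remaining terms of $R_2$ are estimated as $W_4$ was, with a projection correction of the form $\int_0^l(\Pi_N-I)P_{12}(z)\cdot f(u_N(z,t))\,dz$ controlled using $\|(I-\Pi_N)P_{12}\|_{L^2[0,l]}\to 0$ together with the growth condition $|f(s)|\le L|s|+\zeta|s|^{2q-1}$ and Young's inequality, so that this error is absorbed into small fractions of $\|u_N\|_{L^2[0,l]}^2$ and $\|u_N\|_{L^{2q}[0,l]}^{2q}$.

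For $R_4$, note that $|H(z,t)|\le d_\infty$ by construction, $H_t\in L^{\infty}$ (as a consequence of the assumption $\dot d_i\in L^{\infty}(\mathbb{R}_+)$), and the difference $f(u_N+H)-f(u_N)$ obeys the same pointwise estimate derived in the proof of the decomposition lemma of Section~4. Applying Cauchy--Bunyakovsky and Young's inequality with a free parameter $\varepsilon>0$ as in that proof yields a bound
\begin{equation*}
R_4\le \mu_{\varepsilon,N}\bigl(\|u_N(\cdot,t)\|_{L^2[0,l]}^2+\|x_N(t)\|^2\bigr)+\nu_\varepsilon\bigl(\|u_N(\cdot,t)\|_{L^{2q}[0,l]}^{2q}+\|x_N(t)\|^{2q}\bigr)+C(d_\infty),
\end{equation*}
where $\mu_{\varepsilon,N}$ and $\nu_\varepsilon$ can be made as small as desired by first choosing $\varepsilon$ small (which uses assumption 5) of Theorem \ref{main-theorem} exactly as in the proof of Theorem \ref{th2}) and then $N\ge N^*$ large, while $C(d_\infty)$ depends only on the input data.

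Combining the four estimates and using the matrix inequality $\Xi\succ 0$ from assumption 4), I obtain, uniformly in $N\ge N^*$,
\begin{equation*}
\dot V(u_N(\cdot,t),x_N(t))\le -\tfrac{1}{2}\lambda_{\min}(\Xi)\bigl(\|u_N(\cdot,t)\|_{L^2[0,l]}^2+\|x_N(t)\|^2\bigr)-\gamma_1\bigl(\|u_N(\cdot,t)\|_{L^{2q}[0,l]}^{2q}+\|x_N(t)\|^{2q}\bigr)+\gamma_2,
\end{equation*}
for some $\gamma_1,\gamma_2>0$. Using the two-sided equivalence \eqref{12} between $V$ and $\|u_N\|_{L^2[0,l]}^2+\|x_N\|^2$, this implies $\dot V\le -\gamma_0 V-\gamma_1(\|u_N\|_{L^{2q}[0,l]}^{2q}+\|x_N\|^{2q})+\gamma_2$ with $\gamma_0=\lambda_{\min}(\Xi)/(2\lambda_{\max}(\Pi_2))$. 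Grönwall's inequality applied to $\dot V\le -\gamma_0 V+\gamma_2$ gives \eqref{A30bis} (which in particular rules out finite-time blow-up and extends the local solution to all $t\ge 0$), and integrating the full inequality on $[0,T]$ while dropping the nonpositive $-\gamma_0 V(\cdot,T)$ term on the left-hand side produces \eqref{A30*}. The main obstacle is the careful bookkeeping of the projection corrections in $R_2$ and $R_3$: one must verify that the strict positive definiteness of $\Xi$ leaves enough room to absorb perturbations of order $\|(I-\Pi_N)B\|_{L^2[0,l]}+\|(I-\Pi_N)P_{12}\|_{L^2[0,l]}$, which is legitimate since both $B$ and $P_{12}$ lie in $L^2[0,l]$ and hence these norms vanish as $N\to\infty$.
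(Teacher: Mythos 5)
Your proposal follows essentially the same route as the paper: differentiate the Lyapunov function $V$ from \eqref{A5} along the Galerkin system, estimate $R_1,\dots,R_4$ with Friedrichs, Cauchy--Bunyakovsky and Young exactly as $W_1,\dots,W_4$ were handled, control the projection errors via $\|(I-\Pi_N)B\|_{L^2[0,l]}\to 0$ and $\|(I-\Pi_N)P_{12}\|\to 0$, invoke conditions 2)--5) and $\Xi\succ 0$ to reach $\dot V\le-\gamma_0V-\gamma_1(\|u_N\|_{L^{2q}}^{2q}+\|x_N\|^{2q})+\gamma_2$, and conclude by Gr\"onwall and integration. The only (harmless) deviation is your observation that the first term of $R_2$ vanishes exactly since $u_N\in E_N$, where the paper merely bounds it by $\eta\|x_N\|\|u_N\|_{L^2[0,l]}$.
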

\begin{proof}

Step by step we estimate parts of the expressions for $R_i(z,t)$, $i=1,2,3,4$.

By the Friedrich's inequality, taking the boundary values of $u_N$ we have
\begin{equation}\label{A15*}
-\int\limits_0^l(\partial_zu_N(z,t))^2\,dz\le -\frac{\pi^2}{l^2}\int\limits_0^lu_N^2(z,t)\,dz.
\end{equation}
Since the orthogonal projector $\Pi_N$ is self-adjoint and by the property \eqref{3**} we get
\begin{equation}\label{A16}
\gathered
\int\limits_0^lu_N(z,t)\Pi_Nf(u_N(z,t))\,dz=(u_N,\Pi_Nf(u_N))_{L^2[0,l]}=(\Pi_Nu_N,f(u_N))_{L^2[0,l]}\\
=(u_N,f(u_N))_{L^2[0,l]}\le\sigma\|u_N(\cdot,t)\|_{L^2[0,l]}^2-\alpha\|u_N(\cdot,t)\|_{L^{2q}[0,l]}^{2q}.
\endgathered
\end{equation}
By the Cauchy-Bunyakovskiy inequality we can estimate
\begin{equation}\label{A17}
\int\limits_0^lu_N(z,t)D^{\T}(z)\,dz\int\limits_0^lP_{12}(z)u_N(z,t)\,dz\le \|D\|_{L^2[0,l]}\|P_{12}\|_{L^2[0,l]}\|u_N(\cdot,t)\|_{L^2[0,l]}^2.
\end{equation}
Since $B\in (L^2[0,l])^n$, for any $\eta>0$ there is $N_1=N_1(\eta)\in\mathbb{N}$ such that for any $N\ge N_1$ we have $\|(I-\Pi_N)B^{\T}\|_{L^2[0,l]}<\eta$, hence
\begin{equation}\label{A18}
\int\limits_0^lu_N(z,t)(\Pi_N-I)B^{\T}(z)\,dzx_N(t)\le \eta\|x_N(t)\|\|u_N(\cdot,t)\|_{L^2[0,l]}
\end{equation}
\begin{equation}\label{A18bis}
x_N^{\T}(t)\int\limits_0^l(P_{12}(z)(\Pi_N-I)B^{\T}(z)+(\Pi_N-I)B(z)P_{12}^{\T}(z))\,dzx_N(t)\\
\le 2\eta\|P_{12}\|_{L^2[0,l]}\|x_N(t)\|^2.
\end{equation}
By the Cauchy-Bunyakovskiy inequality, taking  \eqref{3****} into account we have
\begin{equation*}\label{A19}
	\gathered
	X^{\T}(x_N(t))\int\limits_0^lP_{12}(z)u_N(z,t)\,dz\le\delta_2\|x_N(t)\|^{2q-1}\int\limits_0^l\|P_{12}(z)\||u_N(z,t)|\,dz\\
	\le
	\delta_2\|P_{12}\|_{L^2[0,l]}\|x_N(t)\|^{2q-1}\|u_N(\cdot,t)\|_{L^2[0,l]}.
	\endgathered
\end{equation*}
By the Young's inequality with $p_1=\frac{2q}{2q-1}$, $p_2=2q$ and $\tau>0$ we obtain
\begin{equation*}
	\gathered
	\|x_N(t)\|^{2q-1}\|u_N(\cdot,t)\|_{L^2[0,l]}\le\frac{2q-1}{2q}\tau^{2q/(2q-1)}\|x_N(t)\|^{2q}+\frac{\tau^{-2q}}{2q}\|u_N(\cdot,t)\|^{2q}_{L^2[0,l]},
	\endgathered
\end{equation*}
and taking \eqref{W3bis} into account we further estimate
\begin{equation}\label{A20}
\gathered
X^{\T}(x_N(t))\int\limits_0^lP_{12}(z)u_N(z,t)\,dz\le\frac{\delta_2\|P_{12}\|_{L^2[0,l]}(2q-1)}{2q}\tau^{2q/(2q-1)}\|x_N(t)\|^{2q}\\
+
\frac{\delta_2\|P_{12}\|_{L^2[0,l]}l^{q-1}}{2q}\tau^{-2q}\|u_N(\cdot,t)\|_{L^{2q}[0,l]}^{2q}.
\endgathered
\end{equation}
By the self-adjointness of $\Pi_N$ we derive that
\begin{equation*}
	\gathered
	x_N^{\T}(t)\int\limits_0^lP_{12}(z)\Pi_Nf(u_N(z,t))\,dz=(x_N^{\T}(t)P_{12},\Pi_Nf(u_N))_{L^2[0,l]}\\=
	(\Pi_Nx_N^{\T}(t)P_{12},f(u_N))_{L^2[0,l]}\\=
	(x_N^{\T}(t)\Pi_NP_{12},f(u_N))_{L^2[0,l]}=x_N^{\T}(t)\int\limits_0^l\Pi_NP_{12}(z)f(u_N(z,t))\,dz.
	\endgathered
\end{equation*}
Hence, taking \eqref{3*} and \eqref{3***} into account and applying the Cauchy-Bunyakovskiy inequality it follows that
\begin{equation*}
	\gathered
	\Big|x_N^{\T}(t)\int\limits_0^l\Pi_NP_{12}(z)f(u_N(z,t))\,dz\Big|\le
	\|x_N(t)\|\int\limits_0^l\|\Pi_NP_{12}(z)\||f(u_N(z,t))|\,dz\\
	\le
	\|x_N(t)\|\int\limits_0^l\|\Pi_NP_{12}(z)\||f_0(u_N(z,t))|\,dz+\|x_N(t)\|\int\limits_0^l\|\Pi_NP_{12}(z)\||f_1(u_N(z,t))|\,dz\\
	\le L\|x_N(t)\|\|\Pi_N P_{12}\|_{L^2[0,l]}\|u_N(\cdot,t)\|_{L^2[0,l]}+\zeta\|x_N(t)\|\int\limits_0^l\|\Pi_NP_{12}(z)\||u_N(z,t)|^{2q-1}\,dz\\
	\le L\|x_N(t)\|\|P_{12}\|_{L^2[0,l]}\|u_N(\cdot,t)\|_{L^2[0,l]}+\zeta\int\limits_0^l\|\Pi_NP_{12}(z)\|\|x_N(t)\||u_N(z,t)|^{2q-1}\,dz.
	\endgathered
\end{equation*}
To estimate $\|x_N(t)\||u_N(z,t)|^{2q-1}$ we apply the Young's inequality \eqref{Yang} with $p_1=2q$, $p_2=\frac{2q}{2q-1}$,  $\tau>0$
\begin{equation}\label{A20**}
\gathered
\|x_N(t)\||u_N(z,t)|^{2q-1}\le\frac{\tau^{2q}}{2q}\|x_N(t)\|^{2q}+\frac{2q-1}{2q}\tau^{-2q/(2q-1)}|u_N(z,t)|^{2q}.
\endgathered
\end{equation}
Obviously  $\Pi_NP_{12}\in (L^1[0,l])^n$ and
$\|(I-\Pi_N)P_{12}\|_{L^1[0,l]}\le\sqrt{l}\|(I-\Pi_N)P_{12}\|_{L^2[0,l]}\to 0$ for $N\to\infty$. Since $P_{12}\in (L^2[0,l])^n$, for any $\eta>0$ there is $N_2=N_2(\eta)\in\mathbb{N}$, such that $\|\Pi_NP_{12}\|_{L^1[0,l]}\le \|P_{12}\|_{L^1[0,l]}+\eta$.
Also we have $\Pi_NP_{12}\in (L^{\infty}[0,l])^n$ and we can show that $\|(I-\Pi_N)P_{12}\|_{L^{\infty}[0,l]}\to 0$ for $N\to\infty$.
Indeed, let $P_{12}(z)=(P_{12}^{(1)}(z),\dots,P_{12}^{(n)})^{\T}$, then for $k=1,\dots,n$ holds:
\begin{equation*}
	\gathered
	(I-\Pi_N)P_{12}^{(k)}(z)=\sum\limits_{p=N+1}^{\infty}(P_{12}^{(k)},e_p)_{L^2[0,l]}e_p(z),\\
	(P_{12}^{(k)},e_p)_{L^2[0,l]}=\sqrt{\frac{2}{l}}\int\limits_0^lP_{12}^{(k)}(z)\sin\frac{\pi pz}{l}\,dz.
	\endgathered
\end{equation*}
Taking into account that $P_{12}\in (H^2_0[0,l])^n$ and integrating two times by parts we get
\begin{equation*}
	\gathered
	(P_{12}^{(k)},e_p)_{L^2[0,l]}=\sqrt{\frac{2}{l}}\frac{l}{\pi p}\int\limits_0^l\partial_zP_{12}^{(k)}(z)\cos\frac{\pi pz}{l}\,dz\\
	=-\sqrt{\frac{2}{l}}\frac{l^2}{\pi^2 p^2}\int\limits_0^l\partial_{zz}P_{12}^{(k)}(z)\sin\frac{\pi pz}{l}\,dz.
	\endgathered
\end{equation*}
From which follows
\begin{equation*}
	\gathered
	|(P_{12}^{(k)},e_p)_{L^2[0,l]}|\le\frac{l^2}{\pi^2 p^2}\|\partial_{zz}P_{12}^{(k)}\|_{L^2[0,l]}.
	\endgathered
\end{equation*}
Hence
\begin{equation*}
	\gathered
	|(I-\Pi_N)P_{12}^{(k)}(z)|\le\frac{\sqrt{2}l^{3/2}\|\partial_{zz}P_{12}^{(k)}\|_{L^2[0,l]}}{\pi^2}\sum\limits_{p=N+1}^{\infty}\frac{1}{p^2}.
	\endgathered
\end{equation*}
so that
\begin{equation*}
	\gathered
	\|(I-\Pi_N)P_{12}\|_{L^{\infty}[0,l]}\le\frac{\sqrt{2}l^{3/2}\|\partial_{zz}P_{12}\|_{L^2[0,l]}}{\pi^2}\sum\limits_{p=N+1}^{\infty}\frac{1}{p^2}\to 0\quad\text{for}\quad N\to\infty.
	\endgathered
\end{equation*}
We conclude that for any $\eta>0$ there exists $N_3=N_3(\eta)\in\mathbb{N}$ such that for all $N\ge N_3$ we have $\|\Pi_NP_{12}\|_{L^{\infty}[0,l]}\le\|P_{12}\|_{L^{\infty}[0,l]}+\eta$.

With help of \eqref{A20**} we can estimate the integral
\begin{equation*}
	\gathered
	\int\limits_0^l\|\Pi_NP_{12}(z)\|\|x_N(t)\||u_N(z,t)|^{2q-1}\,dz\\
	\le \frac{\tau^{2q}}{2q}\|\Pi_NP_{12}\|_{L^1[0,l]}\|x_N(t)\|^{2q}+\frac{2q-1}{2q}\tau^{-2q/(2q-1)}\|\Pi_NP_{12}\|_{L^{\infty}[0,l]}\|u_N(\cdot,t)\|^{2q}_{L^{2q}[0,l]}\\
	\le \frac{\tau^{2q}}{2q}(\|P_{12}\|_{L^1[0,l]}+\eta)\|x_N(t)\|^{2q}\\
	+\frac{2q-1}{2q}\tau^{-2q/(2q-1)}(\|P_{12}\|_{L^{\infty}[0,l]}+\eta)\|u_N(\cdot,t)\|^{2q}_{L^{2q}[0,l]}.
	\endgathered
\end{equation*}
Finally we obtain
\begin{equation}\label{A21}
\gathered
\Big|x_N^{\T}(t)\int\limits_0^l\Pi_NP_{12}(z)f(u_N(z,t))\,dz\Big|
\le L\|x_N(t)\|\|P_{12}\|_{L^2[0,l]}\|u_N(\cdot,t)\|_{L^2[0,l]}\\+
\zeta\frac{\tau^{2q}}{2q}(\|P_{12}\|_{L^1[0,l]}+\eta)\|x_N(t)\|^{2q}\\
+\zeta\frac{2q-1}{2q}\tau^{-2q/(2q-1)}(\|P_{12}\|_{L^{\infty}[0,l]}+\eta)\|u_N(\cdot,t)\|^{2q}_{L^{2q}[0,l]}.
\endgathered
\end{equation}
By the condition 2) of Theorem \ref{main-theorem} it follows that
\begin{equation}\label{A22}
x_N^{\T}(t)PX(x_N(t))\le-\delta_1\|x_N(t)\|^{2q}.
\end{equation}
Applying the Cauchy-Bunyakovskiy inequality we have
\begin{equation}\label{A23}
\gathered
\int\limits_0^lu_N(z,t)\Pi_NH_t(z,t)\,dz\le\|H_t\|_{L^2[0,l]}\|u_N(\cdot,t)\|_{L^2[0,l]},
\endgathered
\end{equation}
\begin{equation}\label{A24}
\gathered
\Big|\int\limits_0^lH(z,t)D^{\T}(z)\,dz\int\limits_0^lP_{12}(z)u_N(z,t)\,dz\Big|\\
\le\|D\|_{L^2[0,l]}\|H\|_{L^2[0,l]}\|P_{12}\|_{L^2[0,l]}\|u_N(\cdot,t)\|_{L^2[0,l]},
\endgathered
\end{equation}
\begin{equation}\label{A25}
\gathered
\Big|x_N^{\T}(t)\int\limits_0^lP_{12}(z)\Pi_NH_t(z,t)\,dz\Big|\le\|H_t\|_{L^2[0,l]}\|P_{12}\|_{L^2[0,l]}\|x_N(t)\|,
\endgathered
\end{equation}
\begin{equation}\label{A26}
\gathered
\Big|x_N^{\T}(t)\int\limits_0^lPD(z)H(z,t)\,dz\Big|\le\|P\|\|D\|_{L^2[0,l]}\|H\|_{L^2[0,l]}\|x_N(t)\|.
\endgathered
\end{equation}
From the self-adjointness of $\Pi_N$  follows
\begin{equation*}
	\gathered
	\int\limits_0^lu_N(z,t)\Pi_N(f(u_N(z,t)+H(z,t))-f(u_N(z,t)))\,dz\\
	=\int\limits_0^lu_N(z,t)(f(u_N(z,t)+H(z,t))-f(u_N(z,t)))\,dz\\
	=
	\int\limits_0^lu_N(z,t)(f_0(u_N(z,t)+H(z,t))-f_0(u_N(z,t)))\,dz\\+
	\int\limits_0^lu_N(z,t)(f_1(u_N(z,t)+H(z,t))-f_1(u_N(z,t)))\,dz\\
	\le L\int\limits_0^l|u_N(z,t)||H(z,t)|\,dz\\
	+\int\limits_0^l|u_N(z,t)||f_1(u_N(z,t)+H(z,t))-f_1(u_N(z,t))|\,dz
	\endgathered
\end{equation*}
Using \eqref{3***} and the mean value theorem there is some $\vartheta\in(0,1)$ such that
\begin{equation}\label{A26bis}
\gathered
|f_1(u_N(z,t)+H(z,t))-f_1(u_N(z,t))|\le |f_1^{\prime}((1-\vartheta)u_N(z,t)+\vartheta H(z,t))||H(z,t)|\\
\le c_0(1+((1-\vartheta)|u_N(z,t)|+\vartheta|H(z,t)|)^{2q-2})|H(z,t)|\\
\le c_0(|H(z,t)|+|u_N(z,t)|^{2q-2}|H(z,t)|+|H(z,t)|^{2q-1}).
\endgathered
\end{equation}
By the inequalities of Cauchy-Bunyakovsky and Young we derive that
\begin{equation}\label{A26*}
\gathered
\int\limits_0^l|u_N(z,t)||f_1(u_N(z,t)+H(z,t))-f_1(u_N(z,t))|\,dz\\
\le c_0\int\limits_0^l|u_N(z,t)|(|H(z,t)|+|H(z,t)|^{2q-1})\,dz+c_0\int\limits_0^l|u_N(z,t)|^{2q-1}|H(z,t)|\,dz\\
\le \frac{c_0\epsilon}{2}\|u_N(\cdot,t)\|_{L^2[0,l]}^2+\frac{c_0\epsilon^{-1}}{2}\||H(\cdot,t)|+|H(\cdot,t)|^{2q-1}\|_{L^2[0,l]}^2\\
+
\frac{c_0\epsilon^{2q/(2q-1)}(2q-1)}{2q}\|u_N(\cdot,t)\|_{L^{2q}[0,l]}^{2q}+\frac{c_0\epsilon^{-2q}}{2q}\|H(\cdot,t)\|_{L^{2q}[0,l]}^{2q}
\endgathered
\end{equation}
and finally we conclude
\begin{equation}\label{A27}
\gathered
\int\limits_0^lu_N(z,t)\Pi_N(f(u_N(z,t)+H(z,t))-f(u_N(z,t)))\,dz\\
\le \frac{L\epsilon}{2}\|u_N(\cdot,t)\|_{L^2[0,l]}^2+\frac{L\epsilon^{-1}}{2}\|H(\cdot,t)\|_{L^2[0,l]}^2\\
+\frac{c_0\epsilon}{2}\|u_N(\cdot,t)\|_{L^2[0,l]}^2+\frac{c_0\epsilon^{-1}}{2}\||H(\cdot,t)|+|H(\cdot,t)|^{2q-1}\|_{L^2[0,l]}^2\\
+
\frac{c_0\epsilon^{2q/(2q-1)}(2q-1)}{2q}\|u_N(\cdot,t)\|_{L^{2q}[0,l]}^{2q}+\frac{c_0\epsilon^{-2q}}{2q}\|H(\cdot,t)\|_{L^{2q}[0,l]}^{2q}.
\endgathered
\end{equation}

Again we use that  $\Pi_N$ is self-adjoint to derive
\begin{equation*}
	\gathered
	x_N^{\T}(t)\int\limits_0^lP_{12}(z)\Pi_N(f(u_N(z,t)+H(z,t))-f(u_N(z,t)))\,dz\\=
	x_N^{\T}(t)\int\limits_0^l\Pi_NP_{12}(z)(f(u_N(z,t)+H(z,t))-f(u_N(z,t)))\,dz\\
	\le\|x_N(t)\|\int\limits_0^l\|\Pi_NP_{12}(z)\||f(u_N(z,t)+H(z,t))-f(u_N(z,t))|\,dz
	\endgathered
\end{equation*}
and taking  \eqref{A26bis} into account
\begin{equation*}
	\gathered
	\|x_N(t)\|\int\limits_0^l\|\Pi_NP_{12}(z)\||f_1(u_N(z,t)+H(z,t))-f_1(u_N(z,t))|\,dz\\
	\le c_0\|x_N(t)\|\int\limits_0^l\|\Pi_NP_{12}(z)\|(|H(z,t)|+|u_N(z,t)|^{2q-2}|H(z,t)|+|H(z,t)|^{2q-1})\,dz\\
	\le c_0\|P_{12}\|_{L^2[0,l]}\||H(\cdot,t)|+|H(\cdot,t)|^{2q-1}\|_{L^2[0,l]}\|x_N(t)\|\\+
	c_0\|x_N(t)\|\int\limits_0^l\|\Pi_NP_{12}(z)\||u_N(z,t)|^{2q-2}|H(z,t)|\,dz
	\endgathered
\end{equation*}
By the Young's inequality \eqref{Yang} with $p_1=(2q-1)/(2q-2)$, $p_2=2q-1$ we see that
\begin{equation*}
	\|x_N(t)\||u_N(z,t)|^{2q-2}\le \frac{2q-2}{2q-1}|u_N(z,t)|^{2q-1}+\frac{1}{2q-1}\|x_N(t)\|^{2q-1}
\end{equation*}
Hence
\begin{equation*}
	\gathered
	\|x_N(t)\|\int\limits_0^l\|\Pi_NP_{12}(z)\||u_N(z,t)|^{2q-2}|H(z,t)|\,dz\\
	\le\frac{2q-2}{2q-1}\int\limits_0^l\|\Pi_NP_{12}(z)\||H(z,t)||u_N(z,t)|^{2q-1}\,dz\\
	+\frac{1}{2q-1}\|x_N(t)\|^{2q-1}
	\int\limits_0^l\|\Pi_NP_{12}(z)\||H(z,t)|\,dz
	\endgathered
\end{equation*}
Applying the Young's inequality two times we obtain
\begin{equation*}
	\gathered
	\|x_N(t)\|^{2q-1}
	\int\limits_0^l\|\Pi_NP_{12}(z)\||H(z,t)|\,dz\\
	\le
	\frac{2q-1}{2q}\epsilon^{2q/(2q-1)}\|x_N(t)\|^{2q}+\frac{1}{2q}\epsilon^{-2q}\Big(\int\limits_0^l\|\Pi_NP_{12}(z)\||H(z,t)|\,dz\Big)^{2q}\\
	\le \frac{2q-1}{2q}\epsilon^{2q/(2q-1)}\|x_N(t)\|^{2q}+\frac{1}{2q}\epsilon^{-2q}\|P_{12}\|_{L^2[0,l]}^{q}\|H(\cdot,t)\|_{L^2[0,l]}^{2q}.
	\endgathered
\end{equation*}
\begin{equation*}
	\gathered
	\|\Pi_NP_{12}(z)\||H(z,t)||u_N(z,t)|^{2q-1}\\
	\le\|\Pi_NP_{12}\|_{L^{\infty}[0,l]}(\frac{2q-1}{2q}\epsilon^{2q/(2q-1)}|u_N(z,t)|^{2q}
	+\frac{1}{2q}\epsilon^{-2q}\,|H(z,t)|^{2q})
	\endgathered
\end{equation*}
and finally we get an estimate for the integral
\begin{equation*}
	\gathered
	\int\limits_0^l\|\Pi_NP_{12}(z)\||H(z,t)||u_N(z,t)|^{2q-1}\,dz\\
	\le\|\Pi_NP_{12}\|_{L^{\infty}[0,l]}\frac{2q-1}{2q}\epsilon^{2q/(2q-1)}\|u_N(\cdot,t)\|^{2q}_{L^{2q}[0,l]}\\
	+\|\Pi_NP_{12}\|_{L^{\infty}[0,l]}\frac{1}{2q}\epsilon^{-2q}\|H(\cdot,t)\|^{2q}_{L^{2q}[0,l]}.
	\endgathered
\end{equation*}
In the same way we can derive that
\begin{equation*}
	\gathered
	x_N^{\T}(t)\int\limits_0^lP_{12}(z)\Pi_N(f_0(u_N(z,t)+H(z,t))-f_0(u_N(z,t)))\,dz\\
	\le
	\frac{L}{2}\|P_{12}\|_{L^2[0,l]}\|x_N(t)\|^2+\frac{L}{2}\|P_{12}\|_{L^2[0,l]}.
	\endgathered
\end{equation*}
Let $N\ge N_3(\eta)$, then $\|\Pi_NP_{12}\|_{L^{\infty}[0,l]}\le \|P_{12}\|_{L^{\infty}[0,l]}+\eta$. Hence
\begin{equation}\label{A28}
\gathered
x_N^{\T}(t)\int\limits_0^lP_{12}(z)\Pi_N(f(u_N(z,t)+H(z,t))-f(u_N(z,t)))\,dz\\
\le \Big(\frac{c_0\|P_{12}\|_{L^2[0,l]}\||H(\cdot,t)|+|H(\cdot,t)|^{2q-1}\|_{L^2[0,l]}\epsilon}{2}+\frac{L}{2}\|P_{12}\|_{L^2[0,l]}\Big)\|x_N(t)\|^2\\
+\frac{c_0\|P_{12}\|_{L^2[0,l]}\||H(\cdot,t)|+|H(\cdot,t)|^{2q-1}\|_{L^2[0,l]}\epsilon^{-1}}{2}\\
+\frac{c_0(q-1)}{q}(\|P_{12}\|_{L^{\infty}[0,l]}+\eta)\epsilon^{2q/(2q-1)}\|u_N(\cdot,t)\|_{L^{2q}[0,l]}^{2q}+
\frac{c_0}{2q}\epsilon^{2q/(2q-1)}\|x_N(t)\|^{2q}\\
+\frac{\epsilon^{-2q}c_0}{q(2q-1)}\Big((q-1)(\|P_{12}\|_{L^{\infty}[0,l]}+\eta)\|H\|_{L^{2q}[0,l]}^{2q}\\
+\frac{\|P_{12}\|_{L^2[0,l]}^{2q}}{2}\|H\|_{L^2[0,l]}^{2q}\Big)+\frac{L}{2}\|P_{12}\|_{L^2[0,l]}.
\endgathered
\end{equation}

Collecting the estimations \eqref{A15*}---\eqref{A28} to the estimate for the time derivative of  $V(u_N(\cdot,t),x_N(t))$ along solutions to the Galerkin system \eqref{A4}, which holds for all  $N\ge N^{*}(\eta)=\max(N_1(\eta),N_2(\eta),N_3(\eta))$:
\begin{equation}\label{A29}
\gathered
\dot V(u_N(\cdot,t),x_N)\le -x_N^{\T}(t)\Omega(\eta,\epsilon)x_N(t)-\omega(\eta,\epsilon)\|u_N(\cdot,t)\|^2_{L^2[0,l]}\\
+(2L\|P_{12}\|_{L^2[0,l]}+\xi(\eta,\epsilon))\|x_N(t)\|\|u_N(\cdot,t)\|_{L^2[0,l]}-\\
-\wt H_1(\eta,\epsilon,\tau)\|u_N(\cdot,t)\|_{L^{2q}[0,l]}^{2q}-\wt H_2(\eta,\epsilon,\tau)\|x_N(t)\|^{2q}+\wt H_3(\epsilon,\eta)
\endgathered
\end{equation}
where $\Omega(\eta,\epsilon)\to\Omega$, $\omega(\eta,\epsilon)\to\omega$, $\xi(\eta,\epsilon)\to 0$ for $(\eta,\epsilon)\to (0,0)$,
\begin{equation*}
	\wt H_1(\eta,\epsilon,\tau)\to 2\al-\Big(\frac{\delta_2\|P_{12}\|_{L^2[0,l]}l^{q-1}}{q}\tau^{-2q}+\frac{\zeta(2q-1)\|P_{12}\|_{L^{\infty}[0,l]}}{q}\tau^{-2q/(2q-1)}\Big),
	\end{equation*}
\begin{equation*}
	\gathered
	\wt H_2(\eta,\epsilon,\tau)\to 2\delta_1-\Big(\frac{\zeta\|P_{12}\|_{L^1[0,l]}}{q}\tau^{2q}+\frac{(2q-1)\delta_2\|P_{12}\|_{L^2[0,l]}}{q}\tau^{2q/(2q-1)}\Big),
	\endgathered
\end{equation*}
for $(\eta,\epsilon)\to (0,0)$.

The condition 5) of Theorem \ref{main-theorem} implies the existence of  $\tau>0$ such that the inequalities  \eqref{tauINEQ} hold.
We fix such $\tau>0$ and note that by the positive definiteness of the matrix $\Xi$ it follows that one can choose $\eta$ and $\epsilon$ small enough, so that the quadratic form  \eqref{A29} becomes negative definite and
$\wt H_1(\eta,\epsilon,\tau)>0$, $\wt H_2(\eta,\epsilon,\tau)>0$. In this cas the inequality  \eqref{A29} for $N\ge N^*$ can be written as
\begin{equation}\label{A30}
\dot V(u_N(\cdot,t),x_N(t))\le -\gamma_0 V(u_N(\cdot,t),x_N(t))
-\gamma_1(\|u_N(\cdot,t)\|_{L^{2q}[0,l]}^{2q}+\|x_N(t)\|^{2q})+\gamma_2
\end{equation}
where $\gamma_i>0$, $i=1,2,3$. From \eqref{A30} follows
\begin{equation*}
	\gathered
	\lambda_{\min}(\Pi_1)(\|x_N(t)\|^2+\|u_N(\cdot,t)\|^2_{L^2[0,l]})\le V(u_N(\cdot,t),x_N(t))\\
	\le e^{-\gamma_0t} V(\vph_N,x(0))
	+\frac{\gamma_2}{\gamma_0}\le e^{-\gamma_0t}\lambda_{\max}(\Pi_2)(\|x_0\|^2+\|\vph\|^2_{L^2[0,l]})
	+\frac{\gamma_2}{\gamma_0}.
	\endgathered
\end{equation*}
From which the a priori estimate \eqref{A30bis} follows.

Integrating  \eqref{A30} from $0$ to $T$ we obtain the a priori estimate \eqref{A30*}.
\end{proof}
\begin{lemma}\label{lemmaA2}
There exists $N^*$ such that for all $N\ge N^*$ and $t\in\mathbb{R}_+$ for the solutions $(u_N,x_N)$ to the ODE system \eqref{A4} the following is true
\begin{enumerate}
\item[(i)] The mapping $N_f\,:L^{2q}[0,l]\to L^{2q/(2q-1)}[0,l]$ defined by $N_f: u\mapsto f(u(x))$ is bounded.
\item[(ii)] The linear operator  $\partial_{zz}\,:\,H_0^1[0,l]\to H^{-1}[0,l]$ is bounded.
\item[(iii)] For any $T>0$ the mapping $\partial_zu_N\in L^2([0,T],L^2[0,l])$ and
\begin{equation*}
	\|u_N(\cdot,T)\|_{L^2[0,l]}^2+a^2\|\partial_z u_N\|_{L^2([0,T],L^2[0,l])}^2\le\|\vph\|^2_{L^2[0,l]}+c_2 T.
\end{equation*}
\item[(iv)] For any $s\ge 1$ the sequence $\{\partial_t u_N\}_{N^*}^\infty$ is bounded in $L^{2q/(2q-1)}([0,T],(H_0^s)^*)$.
\end{enumerate}
\end{lemma}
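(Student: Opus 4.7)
The plan is to establish (i)--(iv) in the order stated, leveraging the growth conditions on $f$, the Galerkin equation \eqref{A4} itself, and the $N$-uniform a priori bounds \eqref{A30bis} and \eqref{A30*} from Lemma \ref{lemmaA1}. Items (i) and (ii) are structural facts about the nonlinear and differential operators and should be dispatched first so that they can be invoked cleanly in (iii) and (iv). The heart of the argument is the energy estimate (iii); assertion (iv) then follows by reading \eqref{A4} as an identity for $\partial_t u_N$ and placing each summand into an appropriate Bochner space.

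For (i), I would start from the pointwise estimate $|f(s)|\le L|s|+\zeta|s|^{2q-1}$, which is immediate from $f=f_0+f_1$, $f(0)=0$, \eqref{3*} and \eqref{3***}. Raising to the exponent $2q/(2q-1)$, using $(a+b)^{p}\le 2^{p-1}(a^p+b^p)$, and integrating over $[0,l]$ gives
\begin{equation*}
\|f(u)\|_{L^{2q/(2q-1)}[0,l]}^{2q/(2q-1)}\le c\bigl(\|u\|_{L^{2q/(2q-1)}[0,l]}^{2q/(2q-1)}+\|u\|_{L^{2q}[0,l]}^{2q}\bigr),
\end{equation*}
and on the bounded interval $[0,l]$ the first term is dominated by the $L^{2q}$ norm via H\"older, proving $N_f$ sends bounded sets to bounded sets. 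Assertion (ii) is the classical duality estimate: for $u\in H_0^1[0,l]$ and test $\phi\in H_0^1[0,l]$ one has $\langle\partial_{zz}u,\phi\rangle=-(\partial_z u,\partial_z\phi)_{L^2}$, hence $\|\partial_{zz}u\|_{H^{-1}[0,l]}\le\|u\|_{H_0^1[0,l]}$.

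For (iii) I would take the $L^2$ inner product of the first equation in \eqref{A4} with $u_N(\cdot,t)$, exploiting that $u_N(\cdot,t)\in E_N$ so $\Pi_N u_N=u_N$ and that $\Pi_N$ is self-adjoint; integration by parts in $z$ using the homogeneous boundary conditions yields
\begin{equation*}
\tfrac12\tfrac{d}{dt}\|u_N(\cdot,t)\|_{L^2[0,l]}^2+a^2\|\partial_z u_N(\cdot,t)\|_{L^2[0,l]}^2=(u_N,f(u_N+H))_{L^2}+(u_N,B^{\T}x_N)_{L^2}-(u_N,H_t)_{L^2}.
\end{equation*}
I would then split $f(u_N+H)=f(u_N)+(f(u_N+H)-f(u_N))$, bound the first contribution by $\sigma\|u_N\|_{L^2}^2-\alpha\|u_N\|_{L^{2q}}^{2q}$ via \eqref{3**}, and control the $H$-perturbation by the same kind of Lipschitz-plus-growth estimate used to derive \eqref{A27}. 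The remaining inner products are handled by Cauchy--Bunyakovsky and Young, and the $\|u_N\|_{L^2}^2$ terms are absorbed into the diffusion term (or simply tolerated, because Lemma \ref{lemmaA1} guarantees they are already bounded by a constant plus a linear-in-$t$ function). Integrating from $0$ to $T$ and invoking the uniform bounds $\|u_N\|_{L^\infty([0,T],L^2)},\|x_N\|_{L^\infty([0,T])}<\infty$ and $\int_0^T\|u_N(\cdot,s)\|_{L^{2q}}^{2q}ds\le c(T+1)$ from Lemma \ref{lemmaA1}, together with $\|H\|_{L^\infty}\le d_\infty$ and $\dot d_i\in L^\infty$, produces the stated inequality with a constant $c_2$ depending on $\|B\|_{L^2},\|D\|_{L^2},\|P\|,d_\infty$ and the data of $f$.

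For (iv), I would write
\begin{equation*}
\partial_t u_N=a^2\partial_{zz}u_N+\Pi_N f(u_N+H)+\Pi_N B^{\T}x_N-\Pi_N H_t,
\end{equation*}
and bound each summand in $L^{2q/(2q-1)}([0,T],(H_0^s[0,l])^*)$ uniformly in $N$. By (ii) and (iii), $\{a^2\partial_{zz}u_N\}$ is bounded in $L^2([0,T],H^{-1})$, and since $s\ge 1$ implies $H_0^s\hookrightarrow H_0^1$ and $L^2[0,T]\hookrightarrow L^{2q/(2q-1)}[0,T]$ (because $q\ge 3/2$ gives $2q/(2q-1)\le 2$), this term lies in the target space. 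By (i) and \eqref{A30*}, $\{\Pi_N f(u_N+H)\}$ is bounded in $L^{2q/(2q-1)}([0,T],L^{2q/(2q-1)})$, and the 1D Sobolev embedding $H_0^s\hookrightarrow C[0,l]\hookrightarrow L^{2q}$ for $s\ge 1$ gives $L^{2q/(2q-1)}\hookrightarrow(H_0^s)^*$ by duality, since $\Pi_N$ is a contraction on $L^{2}$ and its operator norm on $L^{2q/(2q-1)}$ is controlled uniformly in $N$. The remaining two terms $\Pi_N B^{\T}x_N$ and $\Pi_N H_t$ are bounded in $L^\infty([0,T],L^2)$ by the boundedness of $x_N$ and the hypothesis $\dot d_i\in L^\infty(\mathbb{R}_+)$, and $L^2\hookrightarrow(H_0^s)^*$ for $s\ge 1$. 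Summing gives the claim.

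The main obstacle is the energy estimate (iii), specifically the precise matching of the dissipative term $-\alpha\|u_N\|_{L^{2q}}^{2q}$ against the perturbation produced by the boundary lift $H$: here the Lipschitz-plus-growth splitting must be performed exactly as in the derivation of \eqref{A27}, and the resulting $\|u_N\|_{L^{2q}}^{2q}$ contributions need to be closed using the already established a priori bound \eqref{A30*}, rather than by further absorption into the diffusion term.
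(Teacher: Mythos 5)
Your proposal is correct and follows essentially the same route as the paper: the growth bound plus H\"older for (i), the duality estimate for (ii), testing the Galerkin equation with $u_N$ and reusing the $f_1$-perturbation estimate together with the uniform bounds of Lemma \ref{lemmaA1} for (iii), and reading off $\partial_t u_N$ term by term from \eqref{A4} for (iv). The only (harmless) deviation is in (iii), where the paper disposes of the $\|u_N\|_{L^{2q}}^{2q}$ contribution by choosing $\epsilon$ small enough that its coefficient $-\alpha+\tfrac{c_0(2q-1)}{2q}\epsilon^{2q/(2q-1)}$ is negative, while you propose closing it via the integrated bound \eqref{A30*}; both work, and your explicit treatment of the $\Pi_N$ terms in (iv) is in fact more careful than the paper's.
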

\begin{proof}
(i) Note that if $u\in L^{2q}[0,l]$, then  $f(u)\in L^{2q/(2q-1)}[0,l]$ and the mapping
$f\,:\,L^{2q}[0,l]\to L^{2q/(2q-1)}[0,l]$ is bounded, because by the H\"older's inequality there is some constant $c^*>0$ such that
\begin{equation*}
	\|f(u)\|_{L^{2q/(2q-1)}[0,l]}\le c^*\|u\|_{L^{2q}[0,l]}.
\end{equation*}
(ii) The linear operator $\partial_{zz}$ is bounded as a mapping from $H^1[0,l]$ to $H^{-1}[0,l]$. Indeed, for any $\vph\in C^{\infty}_0[0,l]$ the following inequality holds
\begin{equation*}
	\gathered
	\sup\limits_{\vph\in C^{\infty}_0[0,l],\,\vph\ne 0}\frac{|\langle\partial_{zz}u,\vph\rangle|}{\|\vph\|_{H^1[0,l]}}
	= \sup\limits_{\vph\in C^{\infty}_0[0,l],\,\vph\ne 0}\frac{|(\partial_{z}u,\partial_{z}\vph)_{L^2[0,l]}|}{\|\vph\|_{H^1[0,l]}}\\
	\le \sup\limits_{\vph\in C^{\infty}_0[0,l],\,\vph\ne 0}\frac{\|\partial_{z}u\|_{L^2[0,l]}\|\partial_{z}\vph\|_{L^2[0,l]}}{\|\vph\|_{H^1[0,l]}}
	\le \|\partial_{z}u\|_{L^2[0,l]}\le\|u\|_{H^1[0,l]}.
	\endgathered
\end{equation*}
Since $C^{\infty}_0[0,l]$ is dence in $H_0^1[0,l]$, we obtain $\|\partial_{zz}u\|_{H^{-1}[0,l]}\le\|u\|_{H^1[0,l]}$.

(iii) Consider the Lyapunov function $V_1(u_N(\cdot,t))=\|u_N(\cdot,t)\|_{L^2[0,l]}^2$. Its time derivative with respect to the first equation of the Galerkin system \eqref{A4} can be estimated as
\begin{equation*}
	\gathered
	\dot V_1(u_N(\cdot,t))=\langle u_N(\cdot,t),a^2\partial_{zz}u_N(\cdot,t)\rangle+\langle u_N(\cdot,t),\Pi_Nf(u_N(\cdot,t))\rangle\\
	+\langle u_N(\cdot,t),\Pi_N(f(u_N(\cdot,t)+H)-f(u_N(\cdot,t)))\rangle\\
	+\langle u_N(\cdot,t),B^{\T}(z)x_N(t)\rangle-
	\langle u_N(\cdot,t),H_t(\cdot,t)\rangle\\
	\le-a^2\|\partial_zu_N(\cdot,t)\|_{L^2[0,l]}^2+(\sigma+L)\|u_N(\cdot,t)\|_{L^2[0,l]}^2-\alpha\|u_N(\cdot,t)\|_{L^{2q}[0,l]}^{2q}\\
	+\int\limits_0^l |u_N(z,t)||f_1(u_N(z,t)+H(z,t))-f_1(u_N(z,t))|\,dz\\
	+\langle u_N(\cdot,t),B^{\T}(z)x_N(t)\rangle-
	\langle u_N(\cdot,t),H_t(\cdot,t)\rangle
	\endgathered
\end{equation*}
Taking  \eqref{A26*}  into account we get
\begin{equation*}
	\gathered
	\dot V_1(u_N(\cdot,t))\le
	-a^2\|\partial_zu_N(\cdot,t)\|_{L^2[0,l]}^2+(\sigma+L+\frac{c_0\epsilon}{2})\|u_N(\cdot,t)\|_{L^2[0,l]}^2\\
	+(-\alpha+\frac{c_0\epsilon^{2q/(2q-1)}(2q-1)}{2q})\|u_N(\cdot,t)\|_{L^{2q}[0,l]}^{2q}
	+c_2^{\prime}+c_3^{\prime}\|u_N(\cdot,t)\|_{L^2[0,l]},
	\endgathered
\end{equation*}
for some $c_2^{\prime}>0$, $c_3^{\prime}>0$.

Choosing $\epsilon$ small enough and using the boundedness of  $\{u_N\}_{N^*}^{\infty}$ in  $L^2[0,l]$
proved in Lemma \ref{lemmaA1} we obtain for some $c_2>0$
\begin{equation*}
	\dot V_1(u_N(\cdot,t))\le
	-a^2\|\partial_zu_N(\cdot,t)\|_{L^2[0,l]}^2+c_2.
\end{equation*}
Integrating this inequality from $0$ to $T$ we obtain (iii).

(iv) Note that the sequence $\{u_N\}_{N^*}^\infty$ is bounded in $L^{2}([0,T],H_0^1[0,l])\cap L^{2q/(2q-1)}([0,T],L^{2q/(2q-1)}[0,l])$,
the mappings $\partial_{zz}u_N$ and $f$ are bounded in $L^2([0,T],H^{-1}[0,l])$ and  $L^{2q/(2q-1)}([0,T],L^{2q/(2q-1)}[0,l])$ respectively. Hence $\partial_{zz}u_N$ and $f$ are bounded in  $L^{2q/(2q-1)}([0,T],(H_0^s)^*)$ for any $s>1$, this means that
$\{\partial_t u_N\}_{N^*}^\infty$ is bounded in $L^{2q/(2q-1)}([0,T],(H_0^s)^*)$.
\end{proof}
\begin{proof}[Proof of Theorem \ref{main-theorem}]
Since the space $L^{2}([0,T],L^2[0,l])$ is a Hilbert space, its bounded subsets are compact in weak topology.
By the Lemma \ref{lemmaA1}  we have the boundedness of the sequence $\{u_N\}_{N=N^*}^{\infty}$ in $L^2([0,T],L^2[0,l])$, hence there is a subsequence which converges weakly to the function $u\in L^{2}([0,T],L^2[0,l])$.
The space $L^{2q}([0,T],L^{2q}[0,l])$ is reflexive as well, hence its bounded subsets are weakly compact. Hence there is a subsequence in the latter one that converges weakly in $L^{2q}([0,T],L^{2q}[0,l])$ and its limit is again $u$.

The time derivatives of the elements  $t$ of this subsequence is bounded in  $L^{2q/(2q-1)}([0,T],(H_0^s)^*)$ and by its reflexivity there is a subsequence  $\partial_t u_N$ that converges weakly in $L^{2q/(2q-1)}([0,T],H^{-s})$ to some function $v$ and we have
$v=\partial_t u$.

The mapping $\,u\mapsto f(u+H(\cdot,t))$ is bounded in  $L^{2q/(2q-1)}([0,T],L^{2q/(2q-1)}[0,l])$, hence we can chose a subsequence $u_N$ from the latter one , such that $f(u_N(\cdot,t)+H(\cdot,t))\to\mu$   weakly in $L^{2q/(2q-1)}([0,T],L^{2q/(2q-1)}[0,l])$. By Theorem \ref{teorema-o-kompakt} applied to $\mathcal X_0=(H_0^s)^*$, $\mathcal X=L^2[0,l]$, $\mathcal X_1=L^{2q}[0,l]$, $p_0=2$, $p_1=2q/(2q-1)$ and by Lemmas \ref{lemmaA1},\ref{lemmaA2}, it follows that the chosen subsequence from  $\{u_N\}_{N=N^*}^{\infty}$ converges to $u$ in $L^2([0,T],L^2[0,l])$. Taking a further subsequence (if necessary) by the Riesz theorem follows that there is a subsequence of functions $\{u_N(\cdot,t)\}_{N=N^*}^{\infty}$ that converges almost everywhere on $[0,T]$ to $u(\cdot,t)$ in
$L^2[0,l]$.

From the sequence of functions $x_N\in C([0,T],\mathbb{R}^n)$ we select the subsequence $\{x_N\}_{N^*}^{\infty}$ corresponding to the last chosen subsequence of $\{u_N\}_{N=N^*}^{\infty}$.
From \eqref{A30bis} and the second equation of \eqref{A4} it follows that there exists $c_1>0$ such that for all $N\ge N^*$ the time derivatives of $x_N$ are unifromly bounded: $\sup\limits_{t\in\mathbb{R}_+}|\dot x_N(t)|\le c_1$. By the theorem of Arzela-Ascoli follows that there is a subsequence of $\{x_N\}_{N^*}^{\infty}$ which converges (in $C([0,T],\mathbb{R}^n)$). The final subsequence obtained from $\{(u_N,x_N)\}_{N=N^*}^{\infty}$ in this way we denote by
$(u_m,x_m)$. Let $x(t)=\lim_{m\to\infty}x_m(t)$ in $C([0,T],\mathbb{R}^n)$.

The sequence $\{u_m\}_{m=m_0}^{\infty}$, $m_0\ge N^*$ is a subsequence of $\{u_N\}_{N^*}^{\infty}$ and possess all the properties mentioned above. Since $\partial_t u_m\to\partial_t u$ weakly in $L^{2q/(2q-1)}([0,T],H^{-s})$ and $u_m(\cdot,0)\to \vph$ in $L^{2}[0,l]$, then
\begin{equation*}
	u_m(\cdot,t)=\int\limits_0^{t}\partial_t u_m(\cdot,\tau)\,d\tau+u_m(\cdot,0)\to \int\limits_0^{t}\partial_t u(\cdot,\tau)\,d\tau+u(\cdot,0)
	\end{equation*}
for each $t\in[0,T]$ weakly in $H^{-s}$ for $m\to\infty$. Then it follows that $u(\cdot,t)\to u_0$ weakly in $H^{-s}$ fro $t\to 0+$. This proves that the initial condition is satisfied.

From \eqref{A4} follows
\begin{equation}\label{A31}
\gathered
\langle\partial_t u_m(\cdot,t),e_j\rangle=a^2\langle\partial_{zz}u_{m}(\cdot,t),e_j\rangle+\langle f_0(u_m(\cdot,t)+H(\cdot,t)),e_j\rangle\\
+\langle f_1(u_m(\cdot,t)+H(\cdot,t)),e_j\rangle+\langle B^{\T}(\cdot)x_m(t),e_j\rangle-\langle H_t(\cdot,t),e_j\rangle.
\endgathered
\end{equation}
By the weak convergence $u_m\to u$ for $m\to\infty$ and definition of the generalized derivative it follows that
\begin{equation*}
	\gathered
	\langle\partial_{zz}u_m(\cdot,t),e_j\rangle=\langle u_m(\cdot,t),\partial_{zz} e_j\rangle
	\to \langle u(\cdot,t),\partial_{zz} e_j\rangle=\langle\partial_{zz} u(\cdot,t), e_j\rangle
	\endgathered
\end{equation*}
for $m\to\infty$.

Recall that $f_0$ as a part of $f$ is globally Lipschitz, so that
\begin{equation}\label{A37*}
\gathered
\langle f_0(u_m(\cdot,t)+H(\cdot,t))-f_0(u(\cdot,t)+H(\cdot,t)),e_j\rangle\\
=\int\limits_0^l (f_0(u_m(z,t)+H(z,t))-f_0(u(z,t)+H(z,t))e_j(z)\,dz\\
\le L\int\limits_0^l|u_m(z,t)-u(z,t)||e_j(z)|\,dz\le L\|u_m(\cdot,t)-u(\cdot,t)\|_{L^2[0,l]}\to 0,\quad{for}\quad m\to\infty.
\endgathered
\end{equation}
By the convergence of $B^{\T}(z)x_m(t)\to B^{\T}(z)x(t)$ in $L^2[0,l]$ for $m\to\infty$, we obtain from   \eqref{A31} taking the limit for $m\to\infty$ that
\begin{equation}
\gathered
\langle\partial_t u(\cdot,t),e_j\rangle=a^2\langle\partial_{zz}u(\cdot,t),e_j\rangle+\langle f_0(u(\cdot,t)+H(\cdot,t)),e_j\rangle+\langle\mu_1,e_j\rangle\\
+\langle B^{\T}(\cdot)x(t),e_j\rangle-\langle H_t(\cdot,t),e_j\rangle
\endgathered
\end{equation}
By the completeness of the set of eigenfunctions  $\{e_j\}_{j=1}^{\infty}$ it follows that for almost all $t\in[0,T]$ we have
\begin{equation}\label{A32}
\partial_t u(z,t)=a^2\partial_{zz}u(z,t)+f_0(u(z,t)+H(z,t))+\mu_1+B^{\T}(z)x(t)-H_t(z,t).
\end{equation}
Hence $\partial_t u(z,t)$ is a sum of functions from $L^{2q/(2q-1)}([0,T],L^{2q/(2q-1)}[0,l])$ and $L^2([0,T],H^{-1}[0,l])$.

We show that $\mu_1=f_1(u(\cdot,t)+H(t))$. Let
\begin{equation}\label{A33}
\varpi_m:=\int\limits_0^{t}\langle f_1(u_m(\cdot,s)+H(\cdot,s))-f_1(v(\cdot,s)+H(\cdot,s)),u_m(\cdot,s)-v(\cdot,s)\rangle\,ds
\end{equation}
By the above considerations $f_1(u_m(\cdot,s)+H(\cdot,s))$, $f_1(v(\cdot,s)+H(\cdot,s))\in L^{2q/(2q-1)}[0,l]$. Applying the integration ba parts as in \eqref{0.2} we get
\begin{equation}\label{A34}
\gathered
\int\limits_0^{t}\langle f_1(u_m(\cdot,s)+H(\cdot,s)),u_m(\cdot,s)\rangle\,ds\\
=\int\limits_0^{t}\langle \partial_tu_m(\cdot,s)-f_0(u_m(\cdot,s)+H(\cdot,s))-a^2\partial_{zz}u_m(\cdot,s)\\
-\Pi_mB^{\T}(z)x_m(s)+\Pi_mH_t(\cdot,s),u_m(\cdot,s)\rangle\,ds\\
=\frac{1}{2}\|u_m(\cdot,t)\|_{L^2[0,l]}^2-\frac{1}{2}\|u_m(\cdot,0)\|_{L^2[0,l]}^2+a^2\int\limits_0^t\|\partial_zu_m(\cdot,s)\|_{L^2[0,l]}^2\,ds\\
-\int\limits_0^t\langle f_0(u_m(\cdot,s)+H(\cdot,s)),u_m(\cdot,s)\rangle\,ds
-\int\limits_0^t\langle B^{\T}(z)x_m(s),u_m(\cdot,s)\rangle\,ds\\
+\int\limits_0^t\langle H_t(\cdot,s),u_m(\cdot,s)\rangle\,ds
\endgathered
\end{equation}
Taking \eqref{0.2} into account we have
\begin{equation}\label{A35}
\gathered
\varpi_m=\frac{1}{2}\|u_m(\cdot,t)\|_{L^2[0,l]}^2-\frac{1}{2}\|u_m(\cdot,0)\|_{L^2[0,l]}^2+a^2\int\limits_0^t\|\partial_zu_m(\cdot,s)\|_{L^2[0,l]}^2\,ds\\
-\int\limits_0^t\langle B^{\T}(z)x_m(s),u_m(\cdot,s)\rangle\,ds+\int\limits_0^t\langle H_t(\cdot,s),u_m(\cdot,s)\rangle\,ds\\
-\int\limits_0^t\langle f_1(u_m(\cdot,s)+H(\cdot,s)),v(\cdot,s)\rangle\,ds\\
-\int\limits_0^t\langle f_1(v(\cdot,s)+H(\cdot,s)),u_m(\cdot,s)-v(\cdot,s)\rangle\,ds\\
-\int\limits_0^t\langle f_0(u_m(\cdot,s)+H(\cdot,s)),u_m(\cdot,s)\rangle\,ds
\endgathered
\end{equation}
Due to the weak convergence $u_m(\cdot,s)\to u(\cdot,s)$ in $L^2[0,l]$ it follows that
\begin{equation}\label{A36}
\lim\inf\limits_{m\to\infty}\|u_m(\cdot,t)\|_{L^2[0,l]}\ge\|u(\cdot,t)\|_{L^2[0,l]}.
\end{equation}
Similarly follows
\begin{equation}\label{A37}
\lim\inf\limits_{m\to\infty}\|\partial_zu_m(\cdot,t)\|_{L^2[0,l]}\ge\|\partial_z u(\cdot,t)\|_{L^2[0,l]}.
\end{equation}
Note that $\|u_m(\cdot,0)\|_{L^2[0,l]}^2\to\|u(\cdot,0)\|_{L^2[0,l]}^2$. From the weak convergence  $u_m\to u$, uniform boundedness in $L^{2}([0,T],L^2[0,l])$ and by the convergence $x_m\to x$ in $C([0,T],\mathbb{R}^n)$  for $m\to\infty$ we get the convergence of
\begin{equation*}
	\gathered
	\int\limits_0^t\langle B^{\T}(z)x_m(s),u_m(\cdot,s)\rangle\,ds\to \int\limits_0^t\langle B^{\T}(z)x(s),u(\cdot,s)\rangle\,ds,\\
	\int\limits_0^t\langle H_t(\cdot,s),u_m(\cdot,s)\rangle\,ds\to \int\limits_0^t\langle H_t(\cdot,s),u(\cdot,s)\rangle\,ds,\\
	\int\limits_0^t\langle f_0(u_m(\cdot,s)+H(\cdot,s)),v(\cdot,s)\rangle\,ds\to \int\limits_0^t\langle \mu,v(\cdot,s)\rangle\,ds,\\
	\int\limits_0^t\langle f_0(v(\cdot,s)+H(\cdot,s)),u_m(\cdot,s)-v(\cdot,s)\rangle\,ds\\\to
	\int\limits_0^t\langle f_0(v(\cdot,s)+H(\cdot,s)),u(\cdot,s)-v(\cdot,s)\rangle\,ds,\\
	\int\limits_0^t\langle f_1(u_m(\cdot,s)+H(\cdot,s)),v(\cdot,s)\rangle\,ds\to \int\limits_0^t\langle \mu,v(\cdot,s)\rangle\,ds,\\
	\int\limits_0^t\langle f_1(v(\cdot,s)+H(\cdot,s)),u_m(\cdot,s)-v(\cdot,s)\rangle\,ds\\\to
	\int\limits_0^t\langle f_1(v(\cdot,s)+H(\cdot,s)),u(\cdot,s)-v(\cdot,s)\rangle\,ds
	\endgathered
\end{equation*}
for $m\to\infty$.
Hence for $m\to\infty$ we have
\begin{equation*}
	\gathered
	\int\limits_0^t\langle f_0(u_m(\cdot,s)+H(\cdot,s)),u_m(\cdot,s)\rangle\,ds
	\to \int\limits_0^t\langle f_0(u(\cdot,s)+H(\cdot,s)),u(\cdot,s)\rangle\,ds
	\endgathered
\end{equation*}
From the other side we have
\begin{equation}\label{A37bis}
\varpi_m=\int\limits_0^{t}\langle f_1(u_m(\cdot,s)+H(\cdot,s))-f_1(v(\cdot,s)+H(\cdot,s)),u_m(\cdot,s)-v(\cdot,s)\rangle\,ds\\
\end{equation}
by the assumptions for $f_1$, in particular that $f_1^{\prime}(s)<0$ for all $s\in\mathbb{R}$, $s\ne 0$ we obtain
\begin{equation}\label{A38}
\gathered
0\ge \varpi_m\ge\frac{1}{2}\|u(\cdot,t)\|_{L^2[0,l]}^2-\frac{1}{2}\|u(\cdot,0)\|_{L^2[0,l]}^2+a^2\int\limits_0^t\|\partial_zu(\cdot,s)\|_{L^2[0,l]}^2\,ds\\
-\int\limits_0^t\langle B^{\T}(z)x(s),u(\cdot,s)\rangle\,ds+\int\limits_0^t\langle H_t(\cdot,s),u(\cdot,s)\rangle\,ds\\
-\int\limits_0^t\langle \mu_1,v(\cdot,s)\rangle\,ds
-\int\limits_0^t\langle f_1(v(\cdot,s)+H(\cdot,s)),u(\cdot,s)-v(\cdot,s)\rangle\,ds\\-
\int\limits_0^t\langle f_0(u(\cdot,s)+H(\cdot,s)),u(\cdot,s)\rangle\,ds
\endgathered
\end{equation}
Using \eqref{0.2} with $u=v$ from \eqref{A32} follows
\begin{equation*}
	\gathered
	\frac{1}{2}\|u(\cdot,t)\|^2_{L^2[0,l]}-\frac{1}{2}\|u(\cdot,0)\|^2_{L^2[0,l]}=-a^2\int\limits_0^t\|\partial_{z}u(\cdot,s)\|_{L^2[0,l]}^2\,ds\\
	+\int\limits_0^t\langle f_0(u(\cdot,s)+H(\cdot,s)),u(\cdot,s)\rangle\,ds+\int\limits_0^t\langle\mu_1,u(\cdot,s)\rangle\,ds\\
	+\int\limits_0^t\langle B^{\T}(z)x(t),u(\cdot,s)\rangle\,ds-\int\limits_0^t\langle H_t(z,t),u(\cdot,s)\rangle\,ds
	\endgathered
\end{equation*}
Comparing this equality with \eqref{A38} we conclude that
\begin{equation}\label{A39}
\gathered
\int\limits_0^t\langle \mu_1-f_1(v(\cdot,s)+H(\cdot,s)),u(\cdot,s)-v(\cdot,s)\rangle\,ds\le 0
\endgathered
\end{equation}
for all $v\in L^{2q}([0,T],L^{2q}[0,l])$. Let $v=u-\nu w$, $\nu\in[0,1]$, for some $\vartheta\in(0,1)$ the following estimation holds
\begin{equation*}
	\gathered
	|f_1(u+\nu w+H)-f_1(u+H)|
	\le |f_1^{\prime}(u+\nu\vartheta w+H)|\nu|w|\\
	\le c_0(1+|u+\nu\vartheta w+H|^{2q-2})\nu|w|\\
	\le c_0(1+3^{2q-3}(|u|^{2q-2}+|w|^{2q-2}+|H|^{2q-2}))\nu|w|\\
	\le \nu(c_0+3^{3q-2}c_0\sup_{(z,s)\in[0,l]\times[0,T]}|H(z,s)|^{2q-2})|w|\\+
	3^{2q-3}c_0\nu|u|^{2q-2}|w|+3^{2q-3}c_0\nu|w|^{2q-1}\\:
	=\nu(\kappa_1|w|+\kappa_2|u|^{2q-2}|w|+\kappa_3|w|^{2q-1}).
	\endgathered
\end{equation*}

Let $w\in L^{2q}([0,T],L^{2q}[0,l])$. Taking\eqref{A30bis} and \eqref{A30*} into account we obtain
\begin{equation*}
	\gathered
	\int\limits_0^t\langle f_1(u(\cdot,s)+\nu w(\cdot,s)+H(\cdot,s))-f_1(u(\cdot,s)+H(\cdot,s)),w(\cdot,s)\rangle\,ds\\
	=\nu\int\limits_0^t\int\limits_0^l(\kappa_1|w(z,s)|^2+\kappa_2|u(z,s)|^{2q-2}|w(z,s)|^2+\kappa_3|w(z,s)|^{2q})\,dz\,ds\\
	\le\nu(\kappa_1\|w\|_{L^2([0,T],L^2[0,l])}^2+(\kappa_3+\frac{\kappa_2}{q})\|w\|_{L^{2q}([0,T],L^{2q}[0,l])}^{2q}\\
	+\frac{\kappa_2(q-1)}{q}\|u\|_{L^{2q}([0,T],L^{2q}[0,l])}^{2q})\to 0\quad\text{for}\quad \nu\to 0.
	\endgathered
\end{equation*}
We substitute $v=u-\nu w$, $\nu\in(0,1]$ into \eqref{A39} and get
\begin{equation}\label{A40}
\int\limits_0^t\langle \mu_1-f_1(u(\cdot,s)+H(\cdot,s)-\nu w(\cdot,s)),w(\cdot,s)\rangle\,ds\le 0.
\end{equation}
Taking the limit for $\nu\to 0$ it follows that
\begin{equation*}
	\int\limits_0^t\langle \mu_1-f_1(u(\cdot,s)+H(\cdot,s)),w(\cdot,s)\rangle\,ds\le 0.
\end{equation*}
Since $w$ is arbitrary it follows that
\begin{equation*}
	\int\limits_0^t\langle \mu_1-f_1(u(\cdot,s)+H(\cdot,s)),w(\cdot,s)\rangle\,ds= 0.
\end{equation*}
and $\mu_1=f_1(u(\cdot,t)+H(\cdot,t)$.
From Lemma \ref{lemmaA1} and the equality
\begin{equation*}
	\gathered
	\dot x_m(t)=Cx_m(t)+X(x_m(t))+\int\limits_0^lD(z)u_m(z,t)\,dz+\int\limits_0^lD(z)H(z,t)\,dz
	\endgathered
\end{equation*}
we conclude the uniform convergence of  $\{\dot x_m(t)\}_{m=m_0}^{\infty}$ on $[0,T]$. Hence in the last equality we can take the limit for $m\to\infty$ which finishes the proof of Theorem \ref{main-theorem}.
\end{proof}
\section{Calculation of $\dot V$ in the proof of Theorem \ref{th2}}\label{detailed-calculations}
Here we calculate the full time derivative of the Lyapunov functional
\begin{equation*}
	\gathered
	V(v(\cdot,t),x)=\|v(\cdot,t)\|^2_{L^2[0,l]}+2x^{\T}\int\limits_0^{l}P_{12}(z)v(z,t)\,dz+x^{\T}Px
	\endgathered
\end{equation*}
as follows. We note that by Lemma 2.2 from \cite{DKS20} we have
\begin{equation*}
	\frac{d}{dt}\|v(\cdot,t)\|^2_{L^2[0,l]}=2\langle\partial_t v,v\rangle,
\end{equation*}
and similarly
\begin{equation*}
	\gathered
	\frac{d}{dt}\int\limits_0^{l}P_{12}(z)v(z,t)\,dz=2\langle\partial_t v,P_{12}\rangle,
	\endgathered
\end{equation*}
so that
\begin{equation*}
	\gathered
	\dot V(v(\cdot,t),x)=2\langle\partial_t v(\cdot,t),v(\cdot,t)\rangle\\+2
	\int\limits_0^l(Cx+X(x)+\int\limits_0^lD(\xi)v(\xi,t)\,d\xi+p(t))^{\T}P_{12}(z)v(z,t)\,dz\\
	+2\langle\partial_t v(\cdot,t), x^{\T}P_{12}\rangle+2x^{\T}P(Cx+X(x)+\int\limits_0^l D(z)v(z,t)\,dz+p(t))\\
	=2a^2\langle\partial_{zz}v,v(\cdot,t)\rangle+2\int\limits_0^l f(v(z,t))v(z,t)\,dz+
	2\int\limits_0^l B^{\T}(z)x v(z,t)\,dz\\
	+2\int\limits_0^l g(z,t)v(z,t)\,dz+2x^{\T}\int\limits_0^lC^{\T}P_{12}(z)v(z,t)\,dz+
	2X^{\T}(x)\int\limits_0^lP_{12}(z)v(z,t)\,dz\\
	+2\int\limits_0^l\Big(\int\limits_0^lv(\xi,t)D^{\T}(\xi)\,d\xi\Big)P_{12}(z)v(z,t)\,dz\\+
	2p^{\T}(t)\int\limits_0^lP_{12}(z)v(z,t)\,dz+2x^{\T}\langle a^2\partial_{zz}v,P_{12}\rangle\\
	+2x^{\T}\int\limits_0^lP_{12}(z)(f(v(z,t))+B^{\T}(z)x+g(z,t))\,dz\\
	+x^{\T}(C^{\T}P+PC)x+2x^{\T}PX(x)+2x^{\T}P\int\limits_0^lD(z)v(z,t)\,dz
	+2x^{\T}Pp(t)
	\endgathered
\end{equation*}
By the definition of the generalized function derivative and taking the boundary conditions
 $P_{12}(0)=P_{12}(l)=0$ into account we have
\begin{equation*}
	\gathered
	\langle\partial_{zz}v,v(\cdot,t)\rangle=
	-\langle\partial_{z}v,\partial_{z}v\rangle=-\|\partial_z v(\cdot,t)\|^2_{L^2[0,l]},\\
	\langle\partial_{zz}v,P_{12}\rangle=\langle v,\partial_{zz}P_{12}\rangle=
	\int\limits_0^lv(z,t)\partial_{zz}P_{12}(z)\,dz.
	\endgathered
\end{equation*}
Hence
\begin{equation*}
	\gathered
	\dot V(v(\cdot,t),x)=-2\|\partial_z v\|^2_{L^2[0,l]}+2\int\limits_0^l f(v(z,t))v(z,t)\,dz\\+
	2\int\limits_0^lP_{12}(z)v(z,t)\,dz\int\limits_0^lv(\xi,t)D^{\T}(\xi)\,d\xi\\
	+x^{\T}(C^{T}P+PC)x+2x^{\T}\int\limits_0^lP_{12}(z)B^{\T}(z)\,dzx+
	2x^{\T}PX(x)\\
	+2\int\limits_0^l g(z,t)v(z,t)\,dz\\ 
	+2x^{\T}Pp(t)+2x^{\T}\int\limits_0^lP_{12}(z)g(z,t)\,dz
	+2p^{\T}(t)\int\limits_0^lP_{12}(z)v(z,t)\,dz\\
	+2X^{\T}(x)\int\limits_0^lP_{12}(z)v(z,t)\,dz+
	2x^{\T}\int\limits_0^lP_{12}(z)f(v(z,t))\,dz
	+2\int\limits_0^l B^{\T}(z)x v(z,t)\,dz\\
	+2x^{\T}\int\limits_0^lC^{\T}P_{12}(z)v(z,t)\,dz\\
	+2x^{\T}\int\limits_0^lv(z,t)a^2\partial_{zz}P_{12}(z)\,dz+
	2x^{\T}P\int\limits_0^lD(z)v(z,t)\,dz
	\endgathered
\end{equation*}
\begin{equation*}
	\gathered
	=W_1+W_2+W_3+W_4\\
	+2x^{\T}\int\limits_0^l(a^2 P_{12}^{\prime\prime}(z)+C^{\T}P_{12}(z)+B(z)+PD(z))v(z,t)\,dz
	\endgathered
\end{equation*}
By the choice of $P_{12}(z)$ the last integral vanishes (see \eqref{11bis}).
\section{Proof of Corollary \ref{unique}}
\begin{proof}
	Assume that $(u_i,x_i)\in L^2(\mathbb{R}_+,L^{\infty}[0,l])\cap L^{2q}(\mathbb{R}_+,L^{2q}[0,l])\in C(\mathbb{R}_+,\mathbb{R}^n)$, $i=1,2$ satisfy \eqref{1} -- \eqref{3}. Then $w:=u_2-u_1$, $x:=x_2-x_1$ solve the following problem
	\begin{equation*}
	\gathered
	w_t(z,t)=a^2w_{zz}(z,t)+f(w(z,t)+u_1(z,t))-f(u_1(z,t))+B^{\T}(z)x(t),\\
	\dot x(t)=Cx(t)+X(x(t)+x_1(t))-X(x_1(t))+\int\limits_0^lD(z)w(z,t)\,dz,
	\endgathered
	\end{equation*}
	subject to initial conditions
	\begin{equation*}
	x(0)=0\in\mathbb{R}^n,\quad w(z,0)=0,\quad z\in(0,l),\quad t\in(0,+\infty)
	\end{equation*}
	and boundary conditions
	\begin{equation*}
	w(0,t)=0,\quad w(l,t)=0.
	\end{equation*}
	From Theorem \ref{th2} follows $\sup_{t\in\mathbb{R}_+}|x_1(t)|<\infty$, $\sup_{t\in\mathbb{R}_+}\|u_1(\cdot,t)\|_{L^{\infty}[0,l]}<\infty$,
	hence for some constant $L_1>0$ we have
	\begin{equation*}
	\|X(x(t)+x_1(t))-X(x_1(t))\|\le L_1\|x(t)\|.
	\end{equation*}
	We introduce a vector Lyapunov function with components defined by  $$V_1(w(\cdot,t))=\frac{1}{2}\|w(\cdot,t)\|_{L^2[0,l]}^2\qquad V_2(x(t))=\frac{1}{2}\|x(t)\|^2$$
	so that
	\begin{equation*}
	\gathered
	\dot V_1(w(\cdot,t))\le-\frac{2\pi^2a^2}{l^2}V_1(w(\cdot,t))+\int\limits_0^lf^{\prime}(u_1(z,t)+\vartheta w(z,t))w^2(z,t)\,dz\\
	+\|B\|_{L^2[0,l]}\|x(t)\|\|w(\cdot,t)\|_{L^2[0,l]}\\
	\le
	(-\frac{2\pi^2a^2}{l^2}+\|B\|_{L^2[0,l]})V_1(w(\cdot,t))
	+\|B\|_{L^2[0,l]}V_2(x(t)),\\
	\dot V_2(x(t))\le \|C\|V_2(x(t))+2L_2V_2(x(t))+\|D\|_{L^2[0,l]}\|x(t)\|\|w(\cdot,t)\|_{L^2[0,l]}\\
	\le (2\|C\|+2L_2+\|D\|_{L^2[0,l]})V_2(x(t))+\|D\|_{L^2[0,l]}V_1(w(\cdot,t)),
	\endgathered
	\end{equation*}
	for some $\vartheta\in(0,1)$, which implies that for all $t\ge 0$
	\begin{equation*}
	\gathered
	0\le \begin{pmatrix}
	V_1(w(\cdot,t))\\
	V_2(x(t))
	\end{pmatrix}
	\le e^{M t}
	\begin{pmatrix}
	V_1(w(\cdot,0))\\
	V_2(x(0))
	\end{pmatrix}=0,
	\endgathered
	\end{equation*}
	with some constant matrix $M$. Hence $w(\cdot,t)=0$, $x(t)= 0$ for a.a. $t$, which proves the lemma.
\end{proof}

\bibliography{references}
\bibliographystyle{abbrv}

\end{document}